\numberwithin{equation}{section}
\theoremstyle{definition}
\newtheorem{definition}{Definition}[section] % DO NOT USE subsection!
\newtheorem*{definition*}{Definition} % no number is given
\theoremstyle{plain}
\newtheorem{lemma}{Lemma}[section]
\newtheorem*{lemma*}{Lemma} % no number is given
\newtheorem{proposition}{Proposition}[section]
\newtheorem*{proposition*}{Proposition} % no number is given
\newtheorem{theorem}{Theorem}[section]
\newtheorem*{theorem*}{Theorem} % no number is given
\newtheorem{corollary}{Corollary}[section]
\newtheorem*{corollary*}{Corollary} % no number is given
\newtheorem*{consequence*}{Consequence} % no number is given
\newtheorem*{conjecture*}{Conjecture} % no number is given
\theoremstyle{definition}
\newtheorem{remark}{Remark}[section]
\newtheorem*{remark*}{Remark} % (single remark) no number is given
\newtheorem*{example*}{Example} % (single example) no number is given
\newtheorem*{question*}{Question} % (single question) no number is given
\newtheorem*{exercise*}{Exercise} % (single exercise) no number is given
\newcommand{\disp}{\displaystyle}
\renewcommand{\.}{{}_{\!}} % gives a bigger space than \!
\renewcommand{\a}{\alpha}
\newcommand{\e}{\varepsilon}
\newcommand{\f}{\varphi}
\renewcommand{\l}{\lambda}
\newcommand{\m}{\mu}
\newcommand{\Om}{\Omega}
\newcommand{\s}{\sigma}
\renewcommand{\t}{\theta}
\newcommand{\cT}{\mathcal{T}}
\newcommand{\as}{\ \mbox{\raisebox{.085ex}{$:$}$\! =$} \ \;\!\.} % direct assignment (DO NOT CHANGE!)
\let\oldexists\exists
\renewcommand{\exists}{\oldexists \,} % existential quantifier
\let\oldforall\forall
\renewcommand{\forall}{\oldforall \,} % universal quantifier
\newcommand{\imp}{\ \Longrightarrow \ } % implication
\newcommand{\con}{\ \Longleftarrow \ } % converse
\newcommand{\st}{~|~} % such that
\newcommand{\Oset}{\mbox{\large $\varnothing$}} % empty set
\newcommand{\llist}[3]{#1_{#2} , \ldots , #1_{#3}} % line list = list of elements in a line
\newcommand{\inc}{\subseteq} % inclusion
\newcommand{\setmin}{\,\! \raisebox{0.40ex}{$\smallsetminus$} \;\!\!} % difference between sets
\newcommand{\cart}{\! \times \!} % Cartesian product
\newcommand{\1}[1]{\mathbf{1}_{\scriptscriptstyle \! #1}} % indicator function of the subset #1
\newcommand{\rest}[2]{#1_{\mathbf{|} #2}} % restriction of the correspondance/map #1 to the subset #2
\newcommand{\comp}{\. \circ \.} % composition of morphisms/correspondances/maps
\newcommand{\inv}[1]{#1^{- \! 1 \.}} % inverse of the morphism/correspondance/map #1
\renewcommand{\to}{\longrightarrow} % [f : X \to Y]
\newcommand{\act}{\! \cdot \!} % action operator
\newcommand{\seq}[3]{{\left( #1 \right)}_{\! #2 \geq #3}} 
\newcommand{\QQ}{\mathbf{Q}} % rational numbers
\newcommand{\RR}{\mathbf{R}} % real numbers
\newcommand{\Rn}[1]{\mathbf{R}^{\! #1 \!}} % #1-dimensional real Cartesian space
\newcommand{\intr}[1]{\overset{\ {}_{\circ}}{#1}} % interior
\newcommand{\ri}[1]{\mathrm{ri}{\left( #1 \right)}} % relative interior
\newcommand{\clos}[1]{\, \overline{\! #1}} % closure
\newcommand{\rc}[1]{\mathrm{rc}{\left( #1 \right)}} % relative closure
\newcommand{\bd}[1]{\partial #1} % boundary (closure less interior)
\newcommand{\rb}[1]{\mathrm{rb}{\left( #1 \right)}} % relative boundary
\renewcommand{\leq}{\leqslant} % less than or equal to
\renewcommand{\geq}{\geqslant} % greater than or equal to
\let\oldint\int
\renewcommand{\int}[4]{\oldint_{\! #1}^{#2}{\!\!\!\! #3 \mathrm{d} #4}} 
\newcommand{\cLL}[1]{\mathcal{L}^{\raisebox{.5ex}{$\scriptstyle #1$}} \!} 
\renewcommand{\bar}{\overline} % conjugate
\newcommand{\Vect}[1]{\mathrm{Vect}{\left( #1 \right)}} % spanned vector subspace
\let\olddet\det
\renewcommand{\det}[1]{{\olddet}{\left( #1 \right)}} % determinant of an endomorphism or a matrix
\newcommand{\Aff}[1]{\mathrm{Aff}{\left( #1 \right)}} % spanned affine subspace (affine hull)
\newcommand{\Conv}[1]{\mathrm{Conv}{\left( #1 \right)}} % convex hull
\newcommand{\norm}[1]{{\left\| #1 \right\|}} % norm
\newcommand{\onenorm}[1]{{\left\| #1 \right\|}_{\scriptscriptstyle \. 1}} % L1-norm
\newcommand{\twonorm}[1]{{\left\| #1 \right\|}_{\scriptscriptstyle \. 2}} % L2-norm
\newcommand{\scal}[2]{{\left\langle #1 , #2 \right\rangle}} % scalar product
\newcommand{\Cl}[1]{\mathrm{C}^{\raisebox{.5ex}{$\scriptstyle #1$}} \!} % class C^{#1}
\renewcommand{\subsection}{
\@startsection{subsection}
{2} % Mettre 6 permet de se débarrasser de la numérotation
{0ex} % indentation : espace entre la marge de gauche et le titre
{5ex} % espace vertical au-dessus du titre
{2ex} % espacement derrière le titre 
% (valeur > 0 => espacement vertical, valeur < 0 => espacement horizontal)
{\bfseries} % style du texte du titre
}
\renewcommand{\setmin}{\raisebox{0.30ex}{\fontsize{6}{7}\selectfont $\diagdown$}} 
\newcommand{\Cone}[1]{\mathrm{Cone}\!\left(#1\right)} % cone
\newcommand{\Bcone}[1]{\mathrm{Cone_{\textsf b}}\!\left(#1\right)} % blunt conic hull
\begin{document}

\title[]{When is a Minkowski norm strictly sub-convex?}

\author{St\'{e}phane Simon}
\address{St\'{e}phane Simon, 
Universit\'{e} Savoie Mont Blanc, 
CNRS, 
LAMA, 
F--73000 Chamb\'{e}ry, 
France}
\email{Stephane.Simon@univ-smb.fr}

\author{Patrick Verovic}
\address{Patrick Verovic, 
Universit\'{e} Savoie Mont Blanc, 
CNRS, 
LAMA, 
F--73000 Chamb\'{e}ry, 
France}
\email{Patrick.Verovic@univ-smb.fr}

\date{\today}
\subjclass[2010]{Primary: 52A07, Secondary: 52A05}
\keywords{Minkowski norms, affine geometry, topological vector spaces, strict sub-convexity}

%%%%%%%%%%%%
% Abstract %
%%%%%%%%%%%%

\begin{abstract} 
The aim of this paper is to give two complete and simple characterizations of Minkowski norms $N \.$ 
on an \emph{arbitrary} topological real vector space such that 
the sublevel sets of $N \.$ are \emph{strictly convex}. 
We first show that this property is equivalent to the continuity of $N \.$ 
together with the fact that any open chord between two points of the boundary 
of the sublevel set ${\inv{N}([0 , 1) \. )}$ lies inside that set (geometric characterization). 
On the other hand, we prove that this is also the same as saying that $N \.$ is continuous 
and that for an arbitrary real number ${\a > 1}$ the function $N^{\a} \!$ is strictly convex 
(analytic characterization). 
\end{abstract}

\maketitle

\tableofcontents

\setlength{\parindent}{0pt}

\bigskip

\bigskip

\newpage

%%%%%%%%%%%%%%%%%%%%

\section*{Introduction}

In this paper, we shall be concerned with functions defined on a topological real vector space 
which are strictly sub-convex, that is, whose sublevel sets are strictly convex. 
The property of being strictly sub-convex is of course more general than that of being strictly convex. 
For example, the function ${\. f \. : \RR \to \RR}$ defined by 
${\. f \. (t) \. \as 0}$ for ${t \leq 0}$ and ${\. f \. (t) \. \as 1}$ for ${t > 0}$ 
is strictly sub-convex even though it is not strictly convex. 
This notion will be made more precise in Section~\ref{sec:About-sub-convex-functions}. 

\bigskip

We shall focus on a particular class of strictly sub-convex functions 
on an arbitrary topological real vector space which are positively homogeneous and that are called Minkowski norms (see Section~\ref{sec:Preliminaries}). 

\bigskip

Minkowski norms are the most natural generalization of the usual norms defined on real vector spaces. 
Indeed, they differ from norms in that they are not necessarily symmetric, 
and hence they give rise to pseudo-metrics instead of the classical distance functions. 
Moreover, Minkowski norms are the main mathematical objects behind Finsler metrics that are defined 
on differentiable manifolds and which are the smallest extensions of the well-known Riemannian metrics 
(see for example~\cite{BCS00} and~\cite{CheShe12}). 

\bigskip

The present work gives two complete and simple characterizations of Minkowski norms 
which are strictly sub-convex as we shall see with both Theorem~\ref{thm:Carothers-generalized} 
and Theorem~\ref{thm:Minkowski-norm-strict-sub-cvx} 
in Section~\ref{sec:Application-to-Minkowski-norms}. 

\bigskip

Indeed, in Theorem~\ref{thm:Carothers-generalized}, 
we prove that being strictly sub-convex for a Minkowski norm $N \.$ 
is the same as saying that $N \.$ is continuous and that it satisfies the following 
geometric property: any open chord between two points of the boundary 
of the sublevel set ${\inv{N}([0 , 1) \. )}$ lies inside that set. 

\bigskip

On the other hand, from the point of view of analysis, 
we show in Theorem~\ref{thm:Minkowski-norm-strict-sub-cvx} 
that the strict sub-convexity of a Minkowski norm $N$ 
is equivalent to the continuity of $N$ together with the strict convexity 
of the function $N^{\a}$ for an arbitrary real number $\a > 1$. 

\bigskip

As Minkowski norms belong to the class of non-negative functions which are positively homogeneous 
and sub-convex, Theorem~\ref{thm:Carothers-generalized} 
and Theorem~\ref{thm:Minkowski-norm-strict-sub-cvx} will be a consequence of a more general result 
given by Theorem~\ref{thm:strict-cvx-strict-qc-strict-subc} 
that we will state in Section~\ref{sec:About-sub-convex-functions} 
and which gives a relationship between strict sub-convexity 
of non-negative positively homogeneous functions and strict convexity of any of their powers 
whose exponent is greater than one. 

\bigskip

In order to build this bridge between geometric and analytic aspects of convexity, 
we shall have to give some useful properties about gauge functions 
in Section~\ref{sec:About-positive-homogeneity}---where their continuity is fully characterized---, 
convexity in Section~\ref{sec:About-convexity} 
and sub-additive functions in Section~\ref{sec:About-sub-additive-functions}. 

\bigskip

%%%%%%%%%%%%%%%%%%%%

\section{Preliminaries} \label{sec:Preliminaries}

Let us first give the definition of a Minkowski norm on a real vector space. 

\bigskip

\begin{definition} 
   Given a real vector space $V \!\!$, 
   a function ${N \. : V \! \to \RR}$ is said to be a \emph{Minkowski norm} on $V \!$ 
   whenever it satisfies the following properties: 
   
   \begin{enumerate}
      \item $N \.$ is non-negative. 
      
      \smallskip
      
      \item $N \. (\l x) = \l N \. (x)$ for any $x \in V \!$ and any real number $\l > 0$. 
      \quad (positive homogeneity) 
      
      \smallskip
      
      \item $N \. (x + y) \leq N \. (x) + N \. (y)$ for any $x , y \in V \!\.$. 
      \quad (sub-additivity) 
      
      \smallskip
      
      \item $N \. (x) \neq 0$ for any $x \in V \setmin \{ 0 \}$. 
      \quad (point-separating) 
   \end{enumerate} 
\end{definition}

\bigskip

A Minkowski norm $N \.$ on a real vector space $V \!$ gives rise to the norm $\norm{\cdot}$ on $V \!$ 
defined by ${\norm{x} \as (N \. (x) + N \. ({-x}) \. ) \! / \. 2}$ (the symmetric part of $N$) 
which coincides with $N \.$ in the case where this latter is symmetric 
(that is, if we have ${N \. ({-x}) = N \. (x)}$ for any ${x \in V}$). 

\bigskip

Here are now some basic facts about Minkowski norms that may be useful in the sequel. 

\bigskip

\begin{proposition} \label{prop:Minkowski} 
   Given a Minkowski norm $N \!$ on a real vector space $V \!\!$, we have the following properties: 
   
   \begin{enumerate}
      \item $N \. (0) = 0$. 
      
      \smallskip
      
      \item $N \. (x) \leq 2 \norm{x}$ for any $x \in V \!\!$. 
      
      \smallskip
      
      \item $|N \. (x) - N \. (y) \. | \leq \max{\! \{ N \. (x - y) \, , \, N \. (y - x) \. \}}$ 
      for any $x , y \in V \!\!$. 
      
      \smallskip
      
      \item $|N \. (x) - N \. (y) \. | \leq 2 \norm{x - y}$ for any $x , y \in V \!\!$. 
      \quad (Lipschitz continuity) 
   \end{enumerate} 
\end{proposition}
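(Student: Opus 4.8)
The plan is to derive all four properties directly from the four defining axioms, so that no additional machinery is needed. I would begin with part~(1) by exploiting positive homogeneity: taking $\l = 2$ and the vector $0$ in the homogeneity relation gives $N(0) = N(2 \cdot 0) = 2 N(0)$, whence $N(0) = 0$. Part~(2) is then immediate from the definition of the symmetric part, since $2 \norm{x} = N(x) + N({-x})$ and $N({-x}) \geq 0$ by non-negativity, so that $N(x) \leq N(x) + N({-x}) = 2 \norm{x}$.

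The heart of the statement is part~(3), and it is here that the asymmetry of $N$ must be handled with care. I would write $x = (x - y) + y$ and apply sub-additivity to obtain $N(x) \leq N(x - y) + N(y)$, that is $N(x) - N(y) \leq N(x - y)$; symmetrically, writing $y = (y - x) + x$ yields $N(y) - N(x) \leq N(y - x)$. Because $N$ need not be symmetric, these two one-sided bounds involve the two \emph{different} quantities $N(x-y)$ and $N(y-x)$, which is precisely why a maximum appears on the right-hand side. Distinguishing the two cases according to the sign of $N(x) - N(y)$, one of the two inequalities yields exactly $|N(x) - N(y)|$, and in either case this is bounded by $\max\{N(x-y) \, , \, N(y-x)\}$.

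Finally, part~(4) follows by feeding part~(3) into part~(2). Applying part~(2) to the vectors $x-y$ and $y-x$ gives $N(x-y) \leq 2 \norm{x-y}$ and $N(y-x) \leq 2 \norm{y-x}$, and since the symmetric part satisfies $\norm{y-x} = \norm{x-y}$, both terms are bounded by $2 \norm{x-y}$; hence $\max\{N(x-y) \, , \, N(y-x)\} \leq 2 \norm{x-y}$, and combining with part~(3) delivers the Lipschitz estimate. I expect the only mild subtlety to lie in part~(3), namely in keeping track of which of $N(x-y)$ and $N(y-x)$ bounds which one-sided difference, since conflating these two quantities would be incorrect for a non-symmetric $N$.
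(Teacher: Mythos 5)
Your proof is correct, and since the paper simply states that the proof ``is easy and left to the reader,'' your argument supplies exactly the routine derivation the authors intend: homogeneity at $0$ for part~(1), non-negativity of $N({-x})$ for part~(2), the two one-sided sub-additivity estimates (carefully distinguishing $N(x-y)$ from $N(y-x)$) for part~(3), and the symmetry $\norm{y-x} = \norm{x-y}$ of the symmetric part for part~(4). Nothing further is needed.
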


\bigskip

\begin{proof} 
The proof is easy and left to the reader. 
\end{proof}

\bigskip

Let now $\cT \.$ be the collection of all the subsets $U \!$ of $V \!$ such that for any ${a \in U \!}$ 
there exists a real number ${r > 0}$ which satisfies ${\inv{N}([0 , r]) + a \inc U \!}$. 

\bigskip

We can then easily check that $\cT \.$ is a topology on $V \!$ which contains $\inv{N}([0 , \e) \. )$ 
for any real number ${\e > 0}$, and for which every translation of $V \!$ is a homeomorphism. 
It is actually the coarsest topology on $V \!$ for which $N \.$ is continuous at the origin, 
and it is coarser than the topology on $V \!$ associated with the norm $\norm{\cdot}$. 

\bigskip

Moreover, the vector addition ${V \. \cart V \! \to V \!}$ and the scalar multiplication 
${[0 , {+\infty}) \cart V \! \to V \!}$ by non-negative real numbers are continuous functions 
when $V \!$ is endowed with the topology $\cT \!\.$. 

\bigskip

Nevertheless, the topology $\cT \.$ does not always give rise 
to a topological vector space structure on $V \!$ 
as does the topology on $V \!$ associated with the norm $\norm{\cdot}$. 

\bigskip

Indeed, in order for this to happen, it is necessary and sufficient 
that the antipodal map ${A : V \! \to V \!}$ defined by ${\! A(x) \. \as {-x}}$ 
be continuous at the origin with respect to the topology $\cT \!\!$, 
which is equivalent to having the following condition: 

\smallskip

\centerline{
There exists a real number ${C > 1}$ such that we have ${N \. ({-x}) \leq C N \. (x)}$ 
for any ${x \in V \!\!}$. \qquad ($\clubsuit$)
} 

\bigskip

It is to be noticed that this amounts to saying that the topology $\cT \.$ is nothing else 
than the topology associated with $\norm{\cdot}$. 

\bigskip

The condition~($\clubsuit$) is in particular satisfied when $V \!$ is finite dimensional. 

Indeed, in that case, the sphere ${S \as \{ x \in V \st \norm{x} = 1 \}}$ is compact 
(closed and bounded) for the norm $\norm{\cdot}$, and hence the continuity of $N \.$ 
with respect to $\norm{\cdot}$ (see Point~4 in Proposition~\ref{prop:Minkowski}) 
insures the existence of a real number ${C > 1}$ such that 
we have the inclusion ${N \. (S) \inc [2 \. / \:\!\! C , {+\infty})}$ 
owing to the point-separating property of $N \!$. 
Therefore, we get ${\norm{x} \leq (C \!\. / \. 2) N \. (x)}$ for any ${x \in V}$ 
% no \! at the end of the line
by the positive homogeneity of $N \!$, which finally implies the inequality 
${N \. ({-x}) \leq C N \. (x)}$ since we have ${N \. ({-x}) \leq 2 \norm{{-x}} = 2 \norm{x}}$ 
(see Point~2 in Proposition~\ref{prop:Minkowski}). 

\bigskip

Now, if $V \!$ is infinite dimensional, the above condition~($\clubsuit$) may be false 
as we can see with the following example. 

\smallskip

Let $V \!$ be the real vector space $\ell^{1 \!}(\RR)$ endowed with the one-norm $\onenorm{\cdot}$, 
and let $\f$ be the linear form on $V \!$ defined by 
${\disp \f(x) \. \as \!\!\! \sum_{n = 0}^{+\infty} \frac{n + 1}{n + 2} x_{n}}$. 

Then we have ${\f(x) < \onenorm{x} \.}$ for any ${x \in V \setmin \{ 0 \}}$ 
(this proves in particular that $\f$ is continuous for $\onenorm{\cdot}$), 
which implies that the function ${N \. \as \onenorm{\cdot} \. + \f}$ is a Minkowski norm on $V \!\.$. 

Therefore, if we assume that the above condition~($\clubsuit$) 
is satisfied for $N \!$, then one obtains 
${\f(x) \leq [ \. (C - 1) \:\!\! / \:\!\! (C + 1) \. ] \onenorm{x} \.}$ for any ${x \in V \!\!}$, 
which is equivalent to saying that there exists a constant ${K \in (0 , 1)}$ such that we have 
${\f(x) \leq K \onenorm{x} \.}$ for any ${x \in V \!\.}$. 

But an easy computation shows that we have 
${\sup{\! \{ \f(x) \st x \in V \! \ \mbox{and} \ \onenorm{x} \. = 1 \}} = 1}$, 
which leads to a contradiction. 

\bigskip

Among all the Minkowski norms on a real vector space, those which satisfy the conditions 
given by the following result are of particular interest for the purpose of the present paper. 

\bigskip

\begin{proposition} \label{prop:Minkowski-strict-sub-additivity} 
   Given a real vector space $V \!\.$ and a Minkowski norm $N \!$ on $V \!\!$, 
   the following properties are equivalent: 
   
   \begin{enumerate}
      \item For any non-collinear vectors $x$ and $y$ in $V \!\!$, 
      we have ${N \. (x + y) < N \. (x) + N \. (y)}$. 
      
      \smallskip
      
      \item For any two vectors ${x \neq y}$ in $V \!\.$ satisfying ${N \. (x) = N \. (y) = 1}$, 
      we have ${N \. ( \. (x + y) \! / \.2) < 1}$. 
      
      \smallskip
      
      \item For any two vectors ${x \neq y}$ in $V \!\.$ which satisfy ${N \. (x) = N \. (y) = 1}$, 
      there exists ${s \in (0 , 1)}$ such that we have ${N \. ( \. (1 - s) x + s y) < 1}$. 
      
      \smallskip
      
      \item For any two vectors ${x \neq y}$ in $V \!\.$ which satisfy ${N \. (x) = N \. (y) = 1}$ 
      and for every ${t \in (0 , 1)}$, we have ${N \. ( \. (1 - t) x + t y) < 1}$. 
   \end{enumerate} 
\end{proposition}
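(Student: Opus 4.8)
The plan is to close the cycle of implications $(1)\Rightarrow(2)\Rightarrow(3)\Rightarrow(4)\Rightarrow(1)$, using throughout the observation that sub-additivity together with positive homogeneity forces $N \.$ to be convex: for $t \in (0,1)$ one has $N \. ( \. (1-t)u + tv) \leq N \. ( \. (1-t)u) + N \. (tv) = (1-t)N \. (u) + tN \. (v)$. Consequently, for any pair $x , y$ with $N \. (x) = N \. (y) = 1$, the function $g$ defined on $[0,1]$ by $g(t) \as N \. ( \. (1-t)x + ty)$ is convex with $g(0) = g(1) = 1$; this single remark will carry most of the geometric content.

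The implication $(2)\Rightarrow(3)$ is immediate, taking $s = 1/2$. For $(3)\Rightarrow(4)$, I would fix distinct $x , y$ on the unit level set, record that $g$ is convex with $g(0) = g(1) = 1$, and exploit the single value $g(s) < 1$ granted by $(3)$: for $t \in (0 , s]$ I write $t$ as a convex combination of $0$ and $s$, and for $t \in [s , 1)$ as a convex combination of $s$ and $1$; in each case the weight carried by $g(s)$ is positive, so convexity forces $g(t) < 1$, which is exactly $(4)$. For $(4)\Rightarrow(1)$, given non-collinear $x , y$ (hence $N \. (x) , N \. (y) > 0$ by the point-separating property), I would normalise to $\hat{x} \as x / N \. (x)$ and $\hat{y} \as y / N \. (y)$, which are distinct points of the unit level set, and note that with $t \as N \. (y) / ( \. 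N \. (x) + N \. (y) \. ) \in (0,1)$ one has the identity $(1-t)\hat{x} + t\hat{y} = (x+y) / ( \. N \. (x) + N \. (y) \. )$; applying $(4)$ and positive homogeneity then yields $N \. (x+y) < N \. (x) + N \. (y)$.

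The only genuinely delicate step is $(1)\Rightarrow(2)$, precisely because hypothesis $(1)$ says nothing about collinear pairs. For non-collinear $x , y$ on the unit level set, $(1)$ gives $N \. (x+y) < 2$ and hence $N \. ( \. (x+y)/2) < 1$ at once. The obstacle is the collinear case: if $y = \l x$ with $x \neq y$ and $N \. (x) = N \. (y) = 1$, then $\l > 0$ would force $\l = N \. (y) / N \. (x) = 1$, so necessarily $\l < 0$ and the pair is antiparallel; here $(1)$ is useless and one must argue directly from positive homogeneity. Writing $(x+y)/2 = \m x$ with $\m = (1 + \l)/2$ and treating the sign of $\m$ separately does it: when $\m \geq 0$ one gets $N \. (\m x) = \m < 1$ (or $N \. (0) = 0$), and when $\m < 0$ the relation $N \. ({-x}) = 1 / |\l|$ extracted from $N \. (y) = |\l| N \. ({-x}) = 1$ reduces the desired $N \. (\m x) < 1$ to the elementary estimate $|\m| < |\l|$, which holds exactly because $\l < 1$. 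I expect this collinear bookkeeping to be the main, though entirely elementary, obstacle, all the remaining implications being either formal or direct consequences of the convexity of $g$.
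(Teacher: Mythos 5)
Your proof is correct, and it is more self-contained than the paper's. On the chain $2 \Rightarrow 3 \Rightarrow 4 \Rightarrow 2$ the two arguments coincide in substance: the paper's proof of $3 \Rightarrow 4$ writes $a = (1 - \alpha) x + \alpha z$ with $\alpha := t/s$ for $t \in (0 , s]$ and reduces $t \in [s , 1)$ to that case by swapping $x$ and $y$, which is exactly your convex-combination bookkeeping for the function $g(t) = N \. ( \. (1 - t) x + t y)$, resting on the same observation that positive homogeneity plus sub-additivity make $N$ convex (the paper's Proposition~\ref{prop:phs-c}). Where you genuinely diverge is the remaining equivalence $1 \Leftrightarrow 2$: the paper proves nothing here and simply cites Carothers~\cite[Theorem~11.1, page~110]{Car04}, whereas you close the cycle yourself. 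Your $4 \Rightarrow 1$ via $\hat{x} := x / N \. (x)$, $\hat{y} := y / N \. (y)$ and the weight $t := N \. (y) / ( \. N \. (x) + N \. (y) \. )$ is correct (the stated identity holds, and non-collinearity gives both $N \. (x) , N \. (y) > 0$ and $\hat{x} \neq \hat{y}$), and your $1 \Rightarrow 2$ handles precisely the content hidden in the citation: the antiparallel case $y = \lambda x$ with $\lambda < 0$, where your estimate is right --- $\mu < 0$ forces $\lambda < -1$, and $|\mu| = (-1 - \lambda)/2 < -\lambda = |\lambda|$ is indeed equivalent to $\lambda < 1$. Since Carothers states his theorem for genuine symmetric norms, your direct check that the collinear case survives asymmetry, using $N \. ({-x}) = 1/|\lambda|$ extracted from $N \. (y) = 1$ rather than $N \. ({-x}) = N \. (x)$, is a real gain: the paper's route is shorter by delegating to the normed-space literature, while yours actually substantiates the claim that the equivalence is ``an adaptation'' of that result.
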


\bigskip

\begin{proof} \ 

\textsf{Point~1}$\iff$\textsf{Point~2.} 
This equivalence is an adaptation of~\cite[Theorem~11.1, page~110]{Car04}. 

\medskip

\textsf{Point~2}$\imp$\textsf{Point~3.} 
This is obvious by considering ${s \as 1 \. / \. 2}$. 

\medskip

\textsf{Point~3}$\imp$\textsf{Point~4.} 
Assume that Point~3 is satisfied, 
and let ${x \neq y}$ be two vectors in $V \!$ which satisfy ${N \. (x) = N \. (y) = 1}$. 

\smallskip

Therefore, there exists ${s \in (0 , 1)}$ 
such that the point ${z \as (1 - s) x + s y}$ satisfies ${N \. (z) < 1}$. 

\smallskip

Now, let us fix an arbitrary ${t \in (0 , 1)}$, and consider the point ${a \as (1 - t) x + t y}$. 

\smallskip

\textasteriskcentered \ 
If we have ${t \in (0 , s]}$, then one can write 
${a = (1 - \a) x + \a z}$ with ${\a \as t \. / \. s \in (0 , 1]}$, which yields 
${N \. (a) \leq (1 - \a) N \. (x) + \a N \. (z) = (1 - \a) + \a N \. (z) < (1 - \a) + \a = 1}$ 
since we have ${1 - \a \geq 0}$ and since $N \.$ is positively homogeneous and sub-additive. 

\smallskip

\textasteriskcentered \ 
If we have ${t \in [s , 1)}$, 
then the points ${x' \! \as y}$ and ${y' \! \as x}$ satisfy ${z = (1 - s') x' \! + s' y' \!}$ 
with ${s' \! \as 1 - s \in (0 , 1)}$, 
and hence we get ${a = (1 - t') x' \! + t' y' \!}$ 
with ${t' \! \as 1 - t \in (0 , s']}$, 
which yields ${N \. (a) < 1}$ according to the previous case. 

\medskip

\textsf{Point~4}$\imp$\textsf{Point~2.} 
This is obvious by considering ${t \as 1 \. / \. 2}$. 
\end{proof}

\bigskip

When $N \.$ is a norm on a real vector space $V \!$ 
(that is, a Minkowski norm on $V \!$ which is symmetric) that satisfies the four equivalent properties 
in Proposition~\ref{prop:Minkowski-strict-sub-additivity}, 
then the normed vector space $(V , N)$ is often called ``strictly convex'' 
in the literature as in~\cite[page~108]{Car04} and \cite[page~30]{JohLin01}, 
which is unfortunate (indeed, this expression may induce some confusion and make believe 
that it applies to $N \!$, whereas a norm cannot be strictly convex in the usual sense!). 

Therefore, some authors prefer to say that such a norm is ``rotund'' 
(see~\cite{Asp67} and~\cite{Phe89}), which is much better since 
the fourth property in Proposition~\ref{prop:Minkowski-strict-sub-additivity} 
exactly means that there is no non-trivial line segment in the unit sphere of $N \.$ 
(from an intuitive point of view, this unit sphere does not contain any ``flat piece''). 

We will see in Section~\ref{sec:Application-to-Minkowski-norms} that this can be expressed 
by a topological property of the unit ball of $N \!$, 
and we will call such a norm $N \.$ ``strictly sub-convex''. 

Moreover, we will generalize the notion of ``strict sub-convexity'' 
in Theorem~\ref{thm:Carothers-generalized} to any Minkow-ski % hyphenation
norm on an arbitrary topological real vector space. 

\bigbreak

%%%%%%%%%%%%%%%%%%%%

\section{About positive homogeneity} \label{sec:About-positive-homogeneity}

In this section, we introduce a couple of notions related to affine geometry 
(Subsection~\ref{subsec:Geometric aspects of positive homogeneity}), 
and then give some definitions and properties about positively homogeneous functions 
(Subsection~\ref{subsec:Positively homogeneous functions}) 
in order to characterize the continuity of a gauge function 
on an arbitrary topological real vector space (Subsection~\ref{subsec:Continuity of gauge functions}). 

%%%%%%%%%%%%%%%%%%%%

\subsection{Geometric aspects of positive homogeneity} 
\label{subsec:Geometric aspects of positive homogeneity}

Let us begin by recalling the definition of a cone 
(a subset of a real vector space which is closed under scalar multiplications by positive real numbers) 
and some related affine notions that will be needed in the sequel. 

\bigskip

\begin{definition} 
   A subset $C$ of a real vector space $V \!$ is said to be 
   
   \begin{enumerate}
      \item a \emph{ray} (with initial point at the origin) in $V \!$ 
      if there exists a non-zero vector ${x \in V \!}$ which satisfies 
      ${C = \{ t x \st t \geq 0 \}}$. 
      
      \smallskip
      
      \item a \emph{cone} (with apex at the origin) in $V \!$ 
      if it satisfies ${\l C \inc C}$ for any scalar ${\l > 0}$ 
      (it is said to be \emph{pointed} if it contains the origin, and \emph{blunt} otherwise).
   \end{enumerate} 
\end{definition}

\bigskip

\begin{remark} \label{rem:coneop} \ 

\begin{enumerate}[1)]
   \item A pointed cone is characterized by the fact that its intersection with any ray 
   reduces to that ray or to the origin. Therefore, pointed cones not reduced 
   to the origin coincide with arbitrary unions of rays. 
   
   \medskip
   
   \item In particular, the empty set is a blunt cone, 
   and any union or intersection of cones is also a cone. 
   Moreover, the complement of a cone is a cone, and a product of cones is a cone too. 
   
   \medskip
   
   \item In the case where $V \!$ is a topological real vector space, the interior, 
   the closure and the boundary of a cone in V are also cones in $V \!\.$. 
\end{enumerate} 
\end{remark}

\bigskip

\begin{definition} 
   A subset $S$ of a real vector space is said to be \emph{star-shaped} (about the origin) 
   if for any ${x \in S}$ and ${t \in [0 , 1]}$ we have ${t x \in S \.}$. 
\end{definition}

\bigskip

\begin{definition} 
   Given a subset $S$ of a real vector space $V \!\!$, 
   the \emph{star-shaped hull} $\widehat{S}$ of $S$ 
   is the smallest star-shaped subset of $V \!$ which contains $S \.$. 
\end{definition}

\bigskip

In other words, we have ${\widehat{S} = [0 , 1] S \.}$. 

\bigskip

\begin{definition} 
   The \emph{pointed conic hull} $\Cone{S}$ of a subset $S$ of a real vector space $V \!$ 
   is the smallest cone in $V \!$ which contains ${S \cup \{ 0 \}}$. 
   The \emph{blunt conic hull} $\Bcone{S}$ of $S$ is the smallest cone 
   which contains ${S \,\. \setmin \{ 0 \}}$. In other words, we have 
   ${\Cone{S} = \{ \l x \st \l > 0 \ \ \mbox{and} \ \ x \in S \cup \{ 0 \} \. \}}$ 
   and ${\Bcone{S} = \Cone{S} \. \setmin \{ 0 \}}$. 
\end{definition}

\bigskip

According to this definition, one has 
${S \cup \{ 0 \} \inc \Cone{S}}$ and ${S \,\. \setmin \{ 0 \} \inc \Bcone{S}}$. 

\bigskip

It is to be noticed that for any ${x \in V \setmin \{ 0 \}}$ the pointed conic hull of $\{ x \}$ 
is nothing else than the ray in $V \!$ passing through $x$. 

\bigbreak

\begin{remark} \label{rem:cone} \ 

\begin{enumerate}[1)]
   \item For any subset $S$ of a real vector, we of course have 
   
   %\smallskip
   
   \centerline{
   $\Cone{S \,\. \setmin \{ 0 \} \.} \ = \ \Cone{S} \ = \ \Cone{S \cup \{ 0 \} \.}$~,
   } 
   
   %\smallskip
   
   \centerline{
   $\Bcone{S \,\. \setmin \{ 0 \} \.} \ = \ \Bcone{S} \ = \ \Bcone{S \cup \{ 0 \} \.}$ 
   \qquad and \qquad 
   $\widehat{S} \ \inc \ \Cone{S}$~.
   } 
   
   \medskip
   
   \item For any subsets $A$ and $B$ of a real vector space which satisfy ${A \inc B}$, the inclusions 
   
   %\smallskip
   
   \centerline{
   $\Cone{A} \ \inc \ \Cone{B}$ 
   \qquad and \qquad 
   $\Bcone{A} \ \inc \ \Bcone{B}$
   } 
   
   %\smallskip
   
   \noindent are straightforward. 
   
   \medskip
   
   \item For any subset $S$ of a real vector, we have 
   ${\Cone{\widehat{S} \,\. \setmin \{ 0 \} \.} = \Cone{S}}$ 
   by Points~1 and~2 above. 
   
   \medskip
   
   \item For any subsets $A$ and $C$ of a real vector space, if $C$ is a cone, then we have 
   
   %\smallskip
   
   \centerline{
   $\Cone{A \cap C} \ = \ \Cone{A} \cap (C \cup \{ 0 \} \. )$ 
   \qquad and \qquad 
   $\Bcone{A \cap C} \ = \ \Bcone{A} \cap C \.$~.
   } 
\end{enumerate} 
\end{remark}

\bigskip

\begin{definition} \label{def:vect} 
   Given a subset $S$ of a real vector space $V \!\!$, 
   the vector subspace $\Vect{S}$ of $V$ % no \! at the end of the line
   \emph{spanned} by $S$ is the smallest linear subspace of $V \!$ which contains $S \.$. 
\end{definition}

\bigskip

Therefore, one has ${\Vect{S \,\. \setmin \{ 0 \} \.} = \Vect{S}}$. 
Moreover, for any subsets $A$ and $B$ of $V \!$ which satisfy ${A \inc B}$, 
we obviously have ${\Vect{A} \inc \Vect{B}}$. 

\bigskip

\begin{definition} \label{def:aff-hull} 
   Given a subset $S$ of a real vector space $V \!\!$, the \emph{affine hull} $\Aff{S}$ of $S$ 
   is the smallest affine subspace of $V \!$ which contains $S \.$. 
   In other words, the affine hull of $S$ is equal to the set of points ${x \in V \!}$ 
   which write ${\disp x = \! \sum_{i = 1}^{n} \l_{i} x_{i}}$ 
   for some integer ${n \geq 1}$, some points ${\llist{x}{1}{n} \in S}$ \linebreak
   
   \vspace{-8pt}
   
   and some real numbers ${\llist{\l}{1}{n}}$ 
   which satisfy ${\disp \,\! \sum_{i = 1}^{n} \l_{i} = 1}$. 
\end{definition}

\medskip %\bigskip

Therefore, for any subset $S$ of $V \!\!$, we have ${\Aff{S} \inc \Vect{S}}$. 

\bigskip

Moreover, for any subsets $A$ and $B$ of $V \!$ which satisfy ${A \inc B}$, 
we of course have ${\Aff{A} \inc \Aff{B}}$. 

\bigskip

It is to be noticed that for any subset $S$ of $V \!$ 
the union of all the lines passing through two distinct points of $S$ is exactly $\Aff{S}$. 

\bigskip

Once all these definitions have been recalled, let us now give some useful relationships 
between the affine operations $\mathrm{Cone}$, $\mathrm{Aff}$ and $\mathrm{Vect}$. 

\bigskip

\begin{proposition} \label{prop:vect-vs-aff-bis} 
   For any subset $S \.$ of a real vector space $V \!\!$, 
   the following three properties are equivalent: 
   
   \begin{enumerate}
      \item $0 \in \Aff{S}$. 
      
      \smallskip
      
      \item $\Aff{S} = \Aff{S \cup \{ 0 \} \.}$. 
      
      \smallskip
      
      \item $\Aff{S} = \Vect{S}$. 
   \end{enumerate}
   
   Moreover, we have $\Vect{{\Cone{S}} \.} = \Aff{S \cup \{ 0 \} \.} = \Vect{S}$. 
\end{proposition}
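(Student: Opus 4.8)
The plan is to dispatch the three equivalences by a short cycle of implications built on one elementary fact—that an affine subspace containing the origin is automatically a linear subspace—and then to read off the final chain of equalities by applying the equivalence to the enlarged set $S \cup \{ 0 \}$. Nothing here is deep; the work is in arranging the minimality arguments in the right order and in invoking the background inclusion $\Aff{S} \inc \Vect{S}$ recorded after Definition~\ref{def:aff-hull}.

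For the equivalences I would argue as follows. To prove that (1) implies (3), I observe that if ${0 \in \Aff{S}}$ then $\Aff{S}$ is an affine subspace passing through the origin, hence a genuine linear subspace of $V \!$; since it contains $S \.$, minimality of $\Vect{S}$ forces ${\Vect{S} \inc \Aff{S}}$, and combining this with ${\Aff{S} \inc \Vect{S}}$ yields ${\Aff{S} = \Vect{S}}$. The converse (3)$\imp$(1) is immediate, because $0$ always lies in the linear subspace $\Vect{S}$ and hence in $\Aff{S}$ once the two coincide. For the other pair, (1)$\imp$(2) follows by noting that ${0 \in \Aff{S}}$ gives ${S \cup \{ 0 \} \inc \Aff{S}}$, so that the smallest affine subspace $\Aff{S \cup \{ 0 \} \.}$ containing ${S \cup \{ 0 \}}$ is contained in $\Aff{S}$, while the reverse inclusion is just monotonicity of $\mathrm{Aff}$; and (2)$\imp$(1) is trivial since ${0 \in S \cup \{ 0 \} \inc \Aff{S \cup \{ 0 \} \.} = \Aff{S}}$. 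This closes the cycle and gives the three equivalences.

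For the \textbf{moreover} assertion, the key remark is that the set $S \cup \{ 0 \}$ contains the origin, so condition (1) holds for it and the equivalence just proved yields ${\Aff{S \cup \{ 0 \} \.} = \Vect{S \cup \{ 0 \} \.}}$; since adjoining the origin does not alter the linear span, this equals $\Vect{S}$, giving the second equality. It remains to establish ${\Vect{\Cone{S}} = \Vect{S}}$ by a double inclusion. On one hand ${S \inc \Cone{S}}$, because each ${x \in S}$ equals ${1 \cdot x}$, whence ${\Vect{S} \inc \Vect{\Cone{S}}}$. On the other hand every element of $\Cone{S}$ has the form $\l x$ with ${\l > 0}$ and ${x \in S \cup \{ 0 \}}$, so it lies in the linear subspace $\Vect{S}$; thus ${\Cone{S} \inc \Vect{S}}$ and therefore ${\Vect{\Cone{S}} \inc \Vect{S}}$.

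I do not expect a genuine obstacle: the only points requiring any care are the appeal to ${\Aff{S} \inc \Vect{S}}$ inside (1)$\imp$(3) and the degenerate case ${S = \emptyset}$, where ${\Aff{\emptyset} = \emptyset}$ while ${\Vect{\emptyset} = \{ 0 \}}$, so that all three conditions fail simultaneously (consistently), and the final identities collapse to ${\{ 0 \} = \{ 0 \} = \{ 0 \}}$, keeping the statement valid.
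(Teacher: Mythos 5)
Your proof is correct and follows essentially the same route as the paper: both arguments rest on the fact that an affine hull containing the origin is a linear subspace (which you invoke as a standard fact, while the paper re-derives it via the barycentric computation $x = \sum_i \l_i x_i + (1-\l)\act 0$ in its step Point~2$\imp$Point~3), and your treatment of the final chain of equalities---applying the equivalence to $S \cup \{0\}$ and sandwiching $S \inc \Cone{S} \inc \Vect{S}$---is the same double-inclusion argument as in the paper. The only differences are cosmetic (you prove (1)$\iff$(3) and (1)$\iff$(2) separately instead of the paper's cycle (1)$\imp$(2)$\imp$(3)$\imp$(1)), plus your welcome but inessential check of the degenerate case $S = \Oset$.
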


\bigskip

\begin{proof} \ 

\textsf{Point~1}$\imp$\textsf{Point~2.} 
Assume that we have $0 \in \Aff{S}$. 

\smallskip

This implies $\Aff{S} \cup \{ 0 \} \inc \Aff{S}$, 
and hence $\Aff{\Aff{S} \cup \{ 0 \} \.} \inc \Aff{S}$. 

\smallskip

But, on the other hand, 
we obviously have $S \inc S \cup \{ 0 \} \inc \Aff{S} \cup \{ 0 \}$, which yields 

\smallskip

\centerline{
$\Aff{S} \ \inc \ \Aff{S \cup \{ 0 \} \.} \ \inc \ \Aff{\Aff{S} \cup \{ 0 \} \.}$.
} 

\smallskip

Therefore, we get $\Aff{S} = \Aff{S \cup \{ 0 \} \.}$. 

\medskip

\textsf{Point~2}$\imp$\textsf{Point~3.} 
Assume that we have ${\Aff{S} = \Aff{S \cup \{ 0 \} \.}}$, and pick ${x \in \. \Vect{S}}$ which writes 
${x = \l_{1} x_{1} + \cdots + \l_{n} x_{n}}$ for some integer ${n \geq 1}$, 
some points ${\llist{x}{1}{n} \in S}$ and some real numbers $\llist{\l}{1}{n}$. 

\smallskip

Then ${\l \as \l_{1} + \cdots + \l_{n}}$ satisfies 
${x = \l_{1} x_{1} + \cdots + \l_{n} x_{n} + (1 - \l) 0}$, 
which shows that $x$ is in $\Aff{S \cup \{ 0 \} \.}$ 
since we have ${\l_{1} + \cdots + \l_{n} + (1 - \l) = 1}$. 

\smallskip

So, we proved the inclusion ${\Vect{S} \inc \Aff{S}}$, and hence we get ${\Vect{S} = \Aff{S}}$ 
since we always have ${\Aff{S} \inc \Vect{S}}$. 

\medskip

\textsf{Point~3}$\imp$\textsf{Point~1.} 
This implication is straigthforward since we have ${0 \in \. \Vect{S}}$. 

\medskip

Let us now prove the last point in Proposition~\ref{prop:vect-vs-aff-bis}. 

\smallskip

First of all, since we have ${0 \in S \cup \{ 0 \} \inc \Aff{S \cup \{ 0 \} \.}}$, 
we get ${\Aff{S \cup \{ 0 \} \.} = \Vect{S \cup \{ 0 \} \.}}$ 
by the previous implications Point~1$\imp$Point~2$\imp$Point~3, 
were $S$ is replaced by ${S \cup \{ 0 \}}$. 

\smallskip

Therefore, this yields $\Aff{S \cup \{ 0 \} \.} = \Vect{S}$ 
since we always have $\Vect{S \cup \{ 0 \} \.} = \Vect{S}$. 

\smallskip

Moreover, we have ${S \inc \Cone{S} \inc \Vect{S}}$, and hence 
${\Vect{S} \inc \Vect{{\Cone{S}} \.} \inc \Vect{S}}$, which writes ${\Vect{S} = \Vect{{\Cone{S}} \.}}$. 
\end{proof}

\bigskip

\begin{proposition} \label{prop:vect-vs-aff} 
   Given a real vector space $V \!\!$, the following properties hold: 
   
   \begin{enumerate}
      \item For any $x \in V \!\!$, we have $\Aff{\. (0 , 1) x} = \Aff{[0 , {+\infty}) x} = \RR x$. 
      
      \smallskip
      
      \item For any subset $S \.$ of $V \!\.$ which satisfies $S \,\. \setmin \{ 0 \} \neq \Oset \.$, 
      we have 
      
      %\smallskip
      
      \centerline{
      $\Aff{\widehat{S}} \ = \ \Aff{\. (0 , 1)S} \ = \ \Aff{[ \. (0 , 1) S] \,\. \setmin \{ 0 \} \.} 
      \ = \ 
      \Aff{{\Bcone{S}} \.} \ = \ \Aff{{\Cone{S}} \.}$~.
      } 
   \end{enumerate} 
\end{proposition}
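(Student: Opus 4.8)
The plan is to reduce the whole chain of equalities to the vector-span computation already established in Proposition~\ref{prop:vect-vs-aff-bis}, the bridge being the fact that $0$ lies in the affine hull of each of the five sets. I would prove Point~1 first, because it is exactly the tool that supplies this bridge in the awkward cases. For Point~1, dispose of $x = 0$ immediately: then $(0,1)x$, $[0 , {+\infty})x$ and $\RR x$ all collapse to $\{ 0 \}$, whose affine hull is $\{ 0 \} = \RR x$. For $x \neq 0$, all three sets lie in the line $\RR x$, so each affine hull is $\inc \RR x$; conversely, both $(0,1)x$ and $[0 , {+\infty})x$ contain two distinct points (say $\tfrac{1}{4}x \neq \tfrac{1}{2}x$, respectively $0 \neq x$), and the line through any two distinct collinear points $sx \neq tx$ is exactly $\RR x$. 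Invoking the remark that $\Aff{\cdot}$ is the union of all lines through two distinct points forces both affine hulls to equal $\RR x$, which settles Point~1.

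For Point~2, I would fix once and for all a vector ${x_0 \in S \setmin \{ 0 \}}$, available by the hypothesis ${S \setmin \{ 0 \} \neq \Oset}$. The key observation is that the segment $(0,1)x_0$ is contained in \emph{every} one of the five sets: it sits inside $(0,1)S$; its points are nonzero, so it sits inside ${[(0,1)S] \setmin \{ 0 \}}$; each $t x_0$ with ${t \in (0,1)}$ is a nonzero positive multiple of ${x_0 \in S}$, hence lies in $\Bcone{S}$; and ${(0,1)x_0 \inc \widehat{S} \inc \Cone{S}}$. By Point~1 together with monotonicity of $\Aff{\cdot}$, the line ${\RR x_0 = \Aff{(0,1)x_0}}$ is contained in the affine hull of each set, so in particular $0$ belongs to all five affine hulls. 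Proposition~\ref{prop:vect-vs-aff-bis} then lets me replace each of these affine hulls by the corresponding vector span.

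It remains to check that all five vector spans coincide with $\Vect{S}$, which is the routine part, handled by squeezing. From ${S \inc \widehat{S} \inc \Cone{S} \inc \Vect{S}}$ one gets ${\Vect{\widehat{S}} = \Vect{S}}$; for $(0,1)S$ the identity ${x = 2 \, (\tfrac{1}{2}x)}$ shows ${S \inc \Vect{(0,1)S} \inc \Vect{S}}$; the general fact ${\Vect{T \setmin \{ 0 \}} = \Vect{T}}$ disposes of both ${[(0,1)S] \setmin \{ 0 \}}$ and ${\Bcone{S} = \Cone{S} \setmin \{ 0 \}}$; and ${\Vect{\Cone{S}} = \Vect{S}}$ is precisely the last assertion of Proposition~\ref{prop:vect-vs-aff-bis}. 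Combining this with the previous paragraph shows that all five affine hulls equal $\Vect{S}$, which gives the stated chain of equalities.

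The one genuine subtlety, worth flagging as the main obstacle, is that $(0,1)S$ and ${[(0,1)S] \setmin \{ 0 \}}$ need \emph{not} contain the origin (precisely when ${0 \notin S}$), so Proposition~\ref{prop:vect-vs-aff-bis} cannot be applied to them directly. Point~1 is exactly the device that pulls $0$ into their affine hulls through the auxiliary segment $(0,1)x_0$, and this is the sole place where the hypothesis ${S \setmin \{ 0 \} \neq \Oset}$ is actually used.
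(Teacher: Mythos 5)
Your proof is correct, but for Point~2 it follows a genuinely different route from the paper's. For Point~1 the two arguments are essentially the same: the paper exhibits the explicit affine combination $\l x = (2 - 3\l) \act (1 \. / \. 3) x + (3 \l - 1) \act (2 \. / \. 3) x$, while you observe that each set contains two distinct points of the line $\RR x$ and that the affine hull contains the line through them --- same content, different phrasing. (A small caution: the paper's remark that $\Aff{S}$ \emph{equals} the union of lines through pairs of points of $S$ is false in general, but you only use the true inclusion, namely that $\Aff{S}$ contains each such line, so nothing breaks.) For Point~2, the paper never leaves the affine world: it shows ${\Cone{S} \inc \Aff{\. (0 , 1) S}}$ by applying Point~1 to each ${x \in S}$, squeezes via ${(0 , 1) S \inc \widehat{S} \inc \Cone{S}}$, and handles the deleted-origin sets through the identity ${[ \. (0 , 1) S] \,\. \setmin \{ 0 \} = (0 , 1) [S \,\. \setmin \{ 0 \} \. ]}$ together with the already-proved equality applied to ${S \,\. \setmin \{ 0 \}}$. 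You instead fix ${x_{0} \in S \,\. \setmin \{ 0 \}}$, push the segment $(0 , 1) x_{0}$ into all five sets to force ${0 \in \Aff{\cdot}}$ everywhere, convert every affine hull into a linear span via Proposition~\ref{prop:vect-vs-aff-bis}, and then check all five spans equal $\Vect{S}$. This is legitimate --- Proposition~\ref{prop:vect-vs-aff-bis} is proved earlier and independently, so there is no circularity --- and it buys a strictly stronger conclusion: all five hulls equal $\Vect{S}$, which the paper only extracts afterwards (Points~1 and~2 of Proposition~\ref{prop:abs-cone} are proved \emph{from} this proposition, so your argument effectively absorbs them). What the paper's sandwich argument buys in exchange is economy of means: it needs only monotonicity of $\Aff{\cdot}$ and Point~1, without invoking the span machinery at all. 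Your diagnosis of the one genuine subtlety --- that $(0 , 1) S$ and ${[ \. (0 , 1) S] \,\. \setmin \{ 0 \}}$ may miss the origin, which is exactly where the hypothesis ${S \,\. \setmin \{ 0 \} \neq \Oset}$ enters --- is accurate and is the same pressure point the paper's proof resolves by its substitution trick.
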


%\bigskip
%\pagebreak

\begin{proof} \ 

\textsf{Point~1.} 
Let us fix a vector $x \in V \!\.$. 

\smallskip

First of all, since one has ${(0 , 1) x \inc [0 , {+\infty}) x \inc \RR x}$, we get 

\smallskip

\centerline{
$\Aff{(0 , 1) x} \ \inc \ \Aff{[0 , {+\infty}) x} \ \inc \ \Aff{\RR x} \ = \ \RR x$~.
} 

\smallskip

On the other hand, given any ${\l \in \RR}$, we have 
${\l x = (2 - 3 \l) \act (1 \. / \. 3) x + (3 \l - 1) \act (2 \. / \. 3) x}$, 
which shows that $\l x$ lies in $\Aff{\. (0 , 1) x}$ since one has ${(2 - 3 \l) + (3 \l - 1) = 1}$ 
and since $(1 \. / \. 3) x$ and $(2 \. / \. 3) x$ belong to $(0 , 1) x$. 

\smallskip

This proves the inclusion ${\RR x \inc \Aff{\. (0 , 1) x}}$, and hence the equalities 

\smallskip

\centerline{
$\Aff{\. (0 , 1) x} \ = \ \Aff{[0 , {+\infty}) x} \ = \ \RR x$~.
} 

\medskip

\textsf{Point~2.} 
Let us fix a subset $S$ of $V \!$ which satisfies $S \,\. \setmin \{ 0 \} \neq \Oset \.$. 

\smallskip

\textasteriskcentered \ 
First of all, for any ${x \in S \.}$, one has 

\smallskip

\centerline{
$[0 , {+\infty}) x \ \inc \ \Aff{[0 , {+\infty}) x} 
\ = \ 
\Aff{\. (0 , 1) x} \ \inc \ \Aff{\. (0 , 1) S}$
} 

\smallskip

by Point~1 above, and hence 
${\disp \Cone{S} = \! \bigcup_{x \in S} \! [0 , {+\infty}) x \inc \Aff{\. (0 , 1) S}}$ 
since we have ${S \neq \Oset \.}$, 
which yields the inclusion ${\Aff{{\Cone{S}} \.} \inc \Aff{\. (0 , 1) S}}$. 

\smallskip

\textasteriskcentered \ 
On the other hand, since one has ${(0 , 1) S \inc \widehat{S} \inc \Cone{S}}$ 
by Point~1 in Remark~\ref{rem:cone}, we get 
${\Aff{\. (0 , 1) S} \inc \Aff{\widehat{S}} \inc \Aff{{\Cone{S}} \.}}$, and hence 

\smallskip

\centerline{
$\Aff{\. (0 , 1) S} \ = \ \Aff{\widehat{S}} \ = \ \Aff{{\Cone{S}} \.}$
} 

\smallskip

owing to the previous step. 

%\smallskip
\pagebreak

\textasteriskcentered \ 
Let us now notice that we obviously have 
${[ \. (0 , 1) S] \,\. \setmin \{ 0 \} = (0 , 1) [S \,\. \setmin \{ 0 \} \. ]}$, 
which implies 

\smallskip

\centerline{
$\Aff{[ \. (0 , 1) S] \,\. \setmin \{ 0 \} \.} \ = \ \Aff{\. (0 , 1) [S \,\. \setmin \{ 0 \} \. ]} 
\ = \ 
\Aff{{\Cone{S \,\. \setmin \{ 0 \} \.}} \.} \ = \ \Aff{{\Cone{S}} \.}$
} 

\smallskip

owing to Point~1 in Remark~\ref{rem:cone} and by replacing $S$ by ${S \,\. \setmin \{ 0 \} \neq \Oset}$ 
in the equality 

\smallskip

\centerline{
$\Aff{\. (0 , 1) S} \ = \ \Aff{{\Cone{S}} \.}$
} 

%\smallskip

obtained in the previous step. 

\smallskip

\textasteriskcentered \ 
Finally, since one has the inclusions 
${(0 , 1) [S \,\. \setmin \{ 0 \} \. ] \inc \Bcone{S} \inc \Cone{S}}$, we get 

\smallskip

\centerline{
$\Aff{[ \. (0 , 1) S] \,\. \setmin \{ 0 \} \.} 
\ \inc \ \Aff{{\Bcone{S}} \.} \ \inc \ \Aff{{\Cone{S}} \.}$~,
} 

\smallskip

and hence ${\Aff{{\Bcone{S}} \.} = \Aff{{\Cone{S}} \.}}$ owing to the equality 

\smallskip

\centerline{
$\Aff{[(0 , 1) S] \,\. \setmin \{ 0 \}} \ = \ \Aff{{\Cone{S}} \.}$
} 

%\smallskip

obtained in the previous step. 
\end{proof}

\bigskip

\begin{definition} 
   Given a point $x$ in a real vector space $V \!\!$, 
   a subset $S$ of $V \!$ is said to \emph{absorb} $x$ if there exists a real number ${\l > 0}$ 
   such that we have ${x \in \l S \.}$. 
   
   We will say that $S$ is \emph{absorbing} if $S$ absorbs any ${x \in V \setmin \{ 0 \}}$. 
\end{definition}

\bigskip

\begin{remark} \label{rem:abs-cone} 
In other words, $S$ absorbs ${x \in V \setmin\{ 0 \}}$ if and only if we have ${x \in \Bcone{S}}$. 
Moreover, $S$ absorbs $0$ if and only if $S$ contains $0$. 
\end{remark}

\bigskip

\begin{proposition} \label{prop:abs-cone} 
   Given a subset $S \.$ of a real vector space $V \!\!$, the following properties hold: 
   
   \begin{enumerate}
   \item We have $\Aff{{\Cone{S}} \.} = \Vect{S}$. 
   
   \smallskip
   
   \item If we have $S \,\. \setmin \{ 0 \} \neq \Oset \.$, 
   then we get $\Aff{{\Bcone{S}} \.} = \Vect{S}$. 
   
   \smallskip
   
   \item If there exists $x \in V \!\.$ such that $S \.$ absorbs both $x$ and ${-x}$, 
   then we have $\Aff{S} = \Vect{S}$. 
   
   \smallskip
   
   \item We have the equivalence ($S \.$ is absorbing) $\iff$ $\Cone{S} = V \!\!$. 
   
   \noindent Moreover, having both $\Cone{S} = V \!\.$ and $S \neq \Oset$ 
   implies ${\Aff{S} = \Vect{S} = V}$. 
\end{enumerate} 
\end{proposition}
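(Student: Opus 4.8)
The plan is to obtain all four points as consequences of the structural results already in hand, chiefly Proposition~\ref{prop:vect-vs-aff-bis} (in particular the equivalence ${0 \in \Aff{S} \iff \Aff{S} = \Vect{S}}$ together with the identity ${\Vect{\Cone{S}} = \Vect{S}}$), Proposition~\ref{prop:vect-vs-aff}, and the dictionary between ``absorbs'' and cone membership recorded in Remark~\ref{rem:abs-cone}. None of the four claims requires a fresh idea; the work is almost entirely reduction to these lemmas.

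For Point~1, I would note that $\Cone{S}$ always contains the origin, so ${0 \in \Cone{S} \inc \Aff{\Cone{S}}}$; applying the equivalence of Proposition~\ref{prop:vect-vs-aff-bis} with $S$ replaced by $\Cone{S}$ then yields ${\Aff{\Cone{S}} = \Vect{\Cone{S}}}$, and the final identity of that same proposition gives ${\Vect{\Cone{S}} = \Vect{S}}$. Point~2 follows at once: under the hypothesis ${S \setmin \{ 0 \} \neq \Oset}$, Point~2 of Proposition~\ref{prop:vect-vs-aff} provides ${\Aff{\Bcone{S}} = \Aff{\Cone{S}}}$, which equals $\Vect{S}$ by Point~1.

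For Point~3, by the equivalence in Proposition~\ref{prop:vect-vs-aff-bis} it suffices to exhibit $0$ in $\Aff{S}$. If the given $x$ is the origin, then $S$ absorbs $0$, so ${0 \in S \inc \Aff{S}}$ by Remark~\ref{rem:abs-cone}. If ${x \neq 0}$, then the absorption hypotheses furnish ${\lambda , \mu > 0}$ with ${x / \lambda \in S}$ and ${{-x} / \mu \in S}$, and the affine combination of these two points with weight ${t := \lambda / (\lambda + \mu) \in (0 , 1)}$ cancels the $x$-term (one checks ${t / \lambda = (1 - t) / \mu}$), displaying $0$ as an affine combination of points of $S$; Proposition~\ref{prop:vect-vs-aff-bis} then gives ${\Aff{S} = \Vect{S}}$.

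For Point~4, I would first unwind the definition of ``absorbing'' through Remark~\ref{rem:abs-cone}: $S$ is absorbing exactly when ${V \setmin \{ 0 \} \inc \Bcone{S} = \Cone{S} \setmin \{ 0 \}}$, and since $0$ always lies in $\Cone{S}$ this is equivalent to ${\Cone{S} = V}$. For the final assertion, ${\Cone{S} = V}$ gives ${\Vect{S} = \Aff{\Cone{S}} = \Aff{V} = V}$ by Point~1; and if moreover ${S \neq \Oset}$, then either ${V = \{ 0 \}}$, where everything is trivial, or some nonzero $x$ exists, in which case the equivalence just established makes $S$ absorb both $x$ and ${-x}$, so that Point~3 yields ${\Aff{S} = \Vect{S} = V}$. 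The only genuine subtlety, and the reason the hypothesis ${S \neq \Oset}$ cannot be dropped, is the convention ${\Aff{\Oset} = \Oset}$: the empty set satisfies ${\Cone{S} = V}$ only when ${V = \{ 0 \}}$, yet then fails ${\Aff{S} = V}$. Thus the argument is essentially bookkeeping, and the one place demanding an actual computation is the ${x \neq 0}$ case of Point~3.
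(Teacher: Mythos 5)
Your proof is correct and follows essentially the same route as the paper: Point~1 via Proposition~\ref{prop:vect-vs-aff-bis} applied to $\Cone{S}$, Point~2 via Proposition~\ref{prop:vect-vs-aff}, Point~3 by exhibiting $0$ as an affine combination of the two absorbed points (your weight check ${t / \lambda = (1 - t) \. / \. \m}$ is exactly the paper's combination ${0 = \frac{\m}{\l + \m}(\l x) + \frac{\l}{\l + \m}({-\m} x)}$ after rescaling), and Point~4 via Remark~\ref{rem:abs-cone} together with Points~1 and~3. Your cosmetic deviations---treating ${x = 0}$ separately in Point~3 and splitting on ${V = \{ 0 \}}$ rather than on ${0 \in S}$ in Point~4---do not change the argument.
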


\bigskip

\begin{proof} \ 

\textsf{Point~1.} 
Since $\Cone{S}$ contains the origin, we have ${\Cone{S} = \Cone{S} \cup \{ 0 \}}$, 
and hence $\Aff{{\Cone{S}} \.} = \Aff{{\Cone{S} \cup \{ 0 \}} \.}$. 

\smallskip

Now, according to Proposition~\ref{prop:vect-vs-aff-bis} with $\Cone{S}$ instead of $S \.$, one has 

\smallskip

\centerline{
$\Aff{{\Cone{S} \cup \{ 0 \}} \.} \ = \ \Vect{{\Cone{{\Cone{S}} \.}} \.} \ = \ \Vect{{\Cone{S}} \.}$~,
} 

%\smallskip

which yields 

%\smallskip

\centerline{
$\Aff{{\Cone{S}} \.} = \Aff{{\Cone{S} \cup \{ 0 \}} \.} = \Vect{S}$
} 

\smallskip

since we have $\Vect{{\Cone{S}} \.} = \Vect{S}$ 
owing to Proposition~\ref{prop:vect-vs-aff-bis} once again. 

\medskip

\textsf{Point~2.} 
Assume that we have $S \,\. \setmin\{ 0 \} \neq \Oset \.$. 

\smallskip

Then one obtains ${\Aff{{\Bcone{S}} \.} = \Aff{{\Cone{S}} \.}}$ by Proposition~\ref{prop:vect-vs-aff}, 
which yields the equality $\Aff{{\Bcone{S}} \.} = \Vect{S}$ owing to Point~1 above. 

\medskip

\textsf{Point~3.} 
Let $x \in V \!$ such that $S$ absorbs both $x$ and ${-x}$, 
which means that there exist ${\l > 0}$ and ${\m > 0}$ satisfying $\l x \in S$ and ${-\m} x \in S \.$. 

\smallskip

Since we have 

%\smallskip

\centerline{
$\disp \l + \m \ > \ 0 
\qquad \mbox{and} \qquad 
0 \ = \ \frac{\m}{\l + \m}(\l x) + \frac{\l}{\l + \m}({-\m} x)$~,
} 

\smallskip

one gets $0 \in \Aff{S}$, which yields $\Aff{S} = \Vect{S}$ 
by Point~3 in Proposition~\ref{prop:vect-vs-aff-bis}.

%\medskip
\pagebreak

\textsf{Point~4.} 

\textasteriskcentered \ 
The equivalence is given by Remark~\ref{rem:abs-cone}. 

\smallskip

\textasteriskcentered \ 
Assume that we have $\Cone{S} = V \!$ and $S \neq \Oset \.$. 

\smallskip

If $S$ contains the origin, then $S$ absorbs $0$ by Remark~\ref{rem:abs-cone}, 
and hence we get ${\Aff{S} = \Vect{S}}$ by Point~3 above with $x \as 0$. 

\smallskip

Otherwise, $S$ contains a non-zero vector $x \in V \!\!$, 
and hence it absorbs $x$ and ${-x}$ according to the equivalence previously established, 
which yields $\Aff{S} = \Vect{S}$ by Point~3 above. 

\smallskip

Moreover, since we always have ${\Cone{S} \inc \Vect{S}}$, 
the equality ${\Cone{S} = V \!}$ implies the inclusion $V \inc \Vect{S}$, 
that is, $\Vect{S} = V \!\.$. 
\end{proof}

%%%%%%%%%%%%%%%%%%%%

\subsection{Positively homogeneous functions} \label{subsec:Positively homogeneous functions}

We shall now deal with positively homogeneous functions 
and give some useful properties of their sublevel sets. 

\bigskip

\begin{definition} 
Let $V \!\!$, $W \!$ be real vector spaces and $C$ a cone in $V \!\.$. 
Given a real number ${\a > 0}$, a map ${\. f \. : C \to W}$ is said to be \emph{positively homogeneous 
of degree $\a$} if ${\. f \. (\l x) = \l^{\. \a} \. f \. (x)}$ holds for any $x \in C$ 
and any real number $\l > 0$. 

In the particular case where one has $\a = 1$, 
we merely say that $\. f \.$ is \emph{positively homogeneous}. 
\end{definition}

\bigskip

If we have $W \! \as \RR$, then the word function is preferred to that of map. 

\bigskip

\begin{remark} \label{rem:ph-z} 
For any real vector spaces $V \!$ and $W \!\!$, it is clear that a positively homogeneous map 
$\. f \. : C \to W \!$ of degree $\a  >0$ defined on a cone $C$ in $V \!$ 
satisfies $C \cap \{ 0 \} \inc \inv{f}(0)$. 
\end{remark}

\bigskip

\begin{definition} 
Given a set $X \.$, a function $\. f \. : X \! \to \RR$ and a number $r \in \RR$, 
the \emph{sublevel set} of $\. f \.$ associated with $r$ is defined by 

\smallskip

\centerline{
$S_{r}(f) \. \as \{ x \in \. X \. \st f \. (x) \leq r \} \.$~.
} 
\end{definition}

\bigskip

\begin{remark} \label{rem:sublevel-basic} \ 

\begin{enumerate}[1)]
   \item It is straightforward that the family ${{(S_{r}(f) \. )}_{\! r \in \RR \.}}$ 
   is non-decreasing: for any ${r , r' \. \in \RR}$ which satisfy $r' \. \leq r$, 
   we have the inclusion $S_{r'}(f) \inc S_{r}(f)$. 
   
   \medskip
   
   \item On the other hand, it is useful to notice that for any $r \in \RR$ 
   we have $\disp S_{r}(f) = \:\!\! \bigcap_{a > r} \! S_{a}(f)$. 
\end{enumerate} 
\end{remark}

\bigskip

From now on, we will focus on functions $\. f \. : S \to \RR$ 
defined on a subset $S$ of a real vector space. 

\bigskip

\begin{proposition} \label{prop:ph-sl} 
   Let $C \.$ be a cone in a real vector space and ${\. f \. : C \to \RR}$ 
   a positively homogeneous function of degree ${\a > 0}$. 
   Then for any real number ${r > 0}$, we have the following properties: 
   
   \begin{enumerate}
      \item $S_{r}(f) = r^{1 \. / \. \a} S_{1 \.}(f)$. 
      
      \smallskip
      
      \item $S_{r}(f) \cup \{ 0 \}$ is star-shaped. 
      
      \smallskip
      
      \item If $C \.$ is not empty, then neither is $S_{r}(f)$, 
      and hence we have $\widehat{S_{r}(f)} = S_{r}(f) \cup \{ 0 \}$. 
   \end{enumerate} 
\end{proposition}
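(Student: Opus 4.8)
The plan is to dispatch the three points in order, each reducing to the homogeneity identity $f(\l x) = \l^{\a} f(x)$ together with the fact that $C$ is closed under multiplication by positive scalars. For Point~1 I would establish the equality $S_{r}(f) = r^{1/\a} S_{1}(f)$ by a rescaling argument: given $x \in S_{r}(f)$, the point $y \as r^{-1/\a} x$ lies in $C$ (because $r^{-1/\a} > 0$ and $C$ is a cone) and satisfies $f(y) = r^{-1} f(x) \leq 1$, so $x = r^{1/\a} y \in r^{1/\a} S_{1}(f)$; the reverse inclusion is the same computation run in the other direction, using that $r > 0$ to preserve the inequality. The only thing to track is that the positive scalars involved keep us inside $C$.

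For Point~2 I would verify the defining property of star-shapedness directly for $S_{r}(f) \cup \{ 0 \}$. The element $0$ is harmless, and for $x \in S_{r}(f)$ with $t \in [0 , 1]$ the case $t = 0$ yields $0$, while for $t \in (0 , 1]$ homogeneity gives $f(tx) = t^{\a} f(x)$ with $tx \in C$. Here I would split on the sign of $f(x)$: when $f(x) \geq 0$ I use $0 < t^{\a} \leq 1$ to conclude $f(tx) \leq f(x) \leq r$, and when $f(x) < 0$ I simply note $f(tx) = t^{\a} f(x) < 0 < r$. Either way $tx \in S_{r}(f)$.

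For Point~3 the non-emptiness of $S_{r}(f)$ comes from shrinking a point of $C$: fixing any $x \in C$ (possible since $C \neq \Oset$), if $f(x) \leq 0$ then already $x \in S_{r}(f)$, and otherwise the rescaled point $(r / f(x))^{1/\a} x$ has $f$-value exactly $r$ and lies in $C$ since the scalar is positive. Once $S_{r}(f) \neq \Oset$, the identity $\widehat{S_{r}(f)} = S_{r}(f) \cup \{ 0 \}$ follows formally: by Point~2 the set $S_{r}(f) \cup \{ 0 \}$ is star-shaped and contains $S_{r}(f)$, so minimality of the star-shaped hull gives $\widehat{S_{r}(f)} \inc S_{r}(f) \cup \{ 0 \}$; conversely $S_{r}(f) \inc \widehat{S_{r}(f)}$ always holds, and $0 = 0\,x \in [0 , 1] S_{r}(f) = \widehat{S_{r}(f)}$ for any $x \in S_{r}(f)$, which yields the reverse inclusion.

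The main obstacle --- in fact the only point demanding real care --- is the sign of $f$ in Point~2. Because $f$ is merely assumed positively homogeneous and \emph{not} non-negative, one cannot simply write $f(tx) = t^{\a} f(x) \leq f(x)$, and it is precisely the case distinction on the sign of $f(x)$ that makes the inequality $f(tx) \leq r$ go through. Everything else is routine bookkeeping with the cone property and the homogeneity identity, and Point~3's non-emptiness hypothesis is exactly what rules out the degenerate situation $\widehat{\Oset} = \Oset \neq \{ 0 \}$.
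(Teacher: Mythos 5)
Your proposal is correct, and its skeleton (rescaling via homogeneity for Point~1, minimality of the star-shaped hull for Point~3) matches the paper's; the genuine divergence is in how Point~2 is organized. The paper never verifies star-shapedness pointwise: it derives Point~2 from Point~1, computing for $t \in (0,1]$ that $t\,(S_{r}(f) \cup \{0\}) = t\,r^{1/\alpha} S_{1}(f) \cup \{0\} = (t^{\alpha} r)^{1/\alpha} S_{1}(f) \cup \{0\} = S_{t^{\alpha} r}(f) \cup \{0\} \subseteq S_{r}(f) \cup \{0\}$, the last inclusion holding by monotonicity of the family of sublevel sets since $t^{\alpha} r \leq r$. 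That route buys complete freedom from any sign discussion --- the inclusion $S_{t^{\alpha} r}(f) \subseteq S_{r}(f)$ holds by definition whatever sign $f$ takes --- whereas your direct verification must split on the sign of $f(x)$, the subtlety you correctly flag as the one place where the absence of a non-negativity hypothesis bites; your case split is sound ($t^{\alpha} f(x) \leq f(x) \leq r$ when $f(x) \geq 0$, and $t^{\alpha} f(x) < 0 < r$ when $f(x) < 0$, both using $r > 0$). In Point~3 you also differ mildly: you rescale the point itself (either $x \in S_{r}(f)$ already when $f(x) \leq 0$, or $(r/f(x))^{1/\alpha} x$ lands exactly on level $r$), while the paper rescales sublevel sets, noting $x \in S_{f(x)}(f) \subseteq S_{\lambda}(f)$ with $\lambda = |f(x)|+1$ and $S_{\lambda}(f) = (\lambda/r)^{1/\alpha} S_{r}(f)$ by Point~1; your version is more concrete and equally valid, again at the modest cost of a sign split that the paper's sublevel-set formulation absorbs automatically. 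The concluding hull argument, including your observation that non-emptiness is exactly what rules out $\widehat{\Oset} = \Oset$, coincides with the paper's.
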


\bigskip

\begin{proof} 
Let $r$ be a positive real number.

\medskip

\textsf{Point~1.} 
For any $x \in C \.$, we have the equivalences 
%%%%%%%%%%
\begin{eqnarray*} 
   x \in S_{r}(f) 
   & \iff & \. 
   f \. (x) \ \leq \ r \ = \ (r^{1 \. / \. \a})^{\. \a} \\ 
   & \iff & \. 
   f \. (x) \:\!\! / \:\!\! (r^{1 \. / \. \a})^{\. \a} \. \ \leq \ 1 \\ 
   & \iff & \. 
   f \. (x \. / \. r^{1 \. / \. \a}) \ \leq \ 1 \\ 
   & \iff & 
   x \in r^{1 \. / \. \a} S_{1 \.}(f)~. 
\end{eqnarray*} 
%%%%%%%%%%

\medskip

\textsf{Point~2.} 
Let us consider the set $S \as S_{r}(f) \cup \{ 0 \}$. 

\smallskip

For any $t \in (0 , 1]$, we then have 

\smallskip

\centerline{
$t S \ = \ t S_{r}(f) \cup \{ 0 \} 
\ = \ 
t r^{1 \. / \. \a} S_{1 \.}(f) \cup \{ 0 \} 
\ = \ 
(t^{\a} r)^{\. 1 \. / \. \a} S_{1 \.}(f) \cup \{ 0 \} 
\ = \ 
S_{t^{\a} r}(f) \cup \{ 0 \} \ \inc \ S$
} 

\smallskip

by Point~1 and Point~1 in Remark~\ref{rem:sublevel-basic} with $r' \! \as t^{\a} r \leq r$. 

\smallskip

Moreover, since one has $0 S \inc S \.$, we obtain that $S$ is star-shaped. 

\medskip

\textsf{Point~3.} 
Assume that $C$ is not empty, pick $x$ in $C \.$, and consider $\l \as |f \. (x) \. | + 1 > 0$. 

\smallskip

Then we obtain $S_{\. f \. (x)}(f) \inc S_{\l}(f)$ by Point~1 in Remark~\ref{rem:sublevel-basic} 
since we have $\. f \. (x) \leq \l$. 

\smallskip

Now, we can write ${S_{\l}(f) = (\l \. / \. r)^{\. 1 \. / \. \a} S_{r}(f)}$ by Point~1 above, 
which proves that $S_{r}(f)$ is not empty since $S_{\. f \. (x)}(f)$ contains $x$. 

\smallskip

Therefore, the non-emptyness of $S_{r}(f)$ implies that $\widehat{S_{r}(f)}$ contains $\{ 0 \} \.$, 
and hence contains $S_{r}(f) \cup \{ 0 \}$ since we obviously have $S_{r}(f) \inc \widehat{S_{r}(f)}$. 

\smallskip

On the other hand, the obvious inclusion ${S_{r}(f) \inc S_{r}(f) \cup \{ 0 \}}$ yields 
${\widehat{S_{r}(f)} \inc S_{r}(f) \cup \{ 0 \}}$ 
since ${S_{r}(f) \cup \{ 0 \}}$ is star-shaped by Point~2. 
\end{proof}

\bigskip

\begin{proposition} \label{prop:ap} 
   Let $C \.$ be a cone in a real vector space, ${\. f \. : C \to \RR}$ a non-negative function 
   and ${\a > 0}$ a real number. Then the following properties are equivalent: 
   
   \begin{enumerate}
      \item The function $\. f \.$ is positively homogeneous of degree $\a$. 
      
      \smallskip
      
      \item The function $f^{1 \. / \. \a} \!$ is positively homogeneous. 
      
      \smallskip
      
      \item We have $S_{r}(f) = r^{1 \. / \. \a} S_{1 \.}(f)$ for any real number $r > 0$. 
   \end{enumerate} 
\end{proposition}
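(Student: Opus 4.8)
The plan is to prove $\textsf{Point~1} \iff \textsf{Point~2}$ directly and then to close the loop through $\textsf{Point~1} \imp \textsf{Point~3} \imp \textsf{Point~1}$, the last implication being the only substantial one.

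The equivalence $\textsf{Point~1} \iff \textsf{Point~2}$ is routine: since $\. f \.$ is non-negative, the function $f^{1/\a}$ is well defined, and since $\l > 0$ one passes from $f \. (\l x) = \l^{\a} \. f \. (x)$ to $f^{1/\a}(\l x) = \l \. f^{1/\a}(x)$ by raising to the power $1/\a$, and back again by raising to the power $\a$. The implication $\textsf{Point~1} \imp \textsf{Point~3}$ requires nothing new, as it is precisely Point~1 of Proposition~\ref{prop:ph-sl}.

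The substantive step is $\textsf{Point~3} \imp \textsf{Point~1}$, which is where I expect the real work. The key observation is that, by the very definition of the sublevel sets, membership $x \in S_{r}(f)$ is the same as the inequality $f \. (x) \leq r$. Fixing $x \in C \.$ and $\l > 0$, I would run, for an arbitrary real number $r > 0$, the chain
\[
\l x \in S_{r}(f) \iff \l x \in r^{1/\a} S_{1}(f) \iff x \in (r^{1/\a} / \l) \. S_{1}(f),
\]
where the middle equivalence uses the hypothesis (Point~3) and the last one is merely the rescaling of a set by the positive factor $1/\l$. Rewriting $r^{1/\a} / \l = (r / \l^{\a})^{1/\a}$ and invoking Point~3 once more, this time for the positive number $r / \l^{\a}$, turns the right-hand side into $S_{r / \l^{\a}}(f)$, so that $\l x \in S_{r}(f) \iff x \in S_{r / \l^{\a}}(f)$. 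Translating both ends back into inequalities then yields, for every $r > 0$, the equivalence $f \. (\l x) \leq r \iff \l^{\a} \. f \. (x) \leq r$.

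It remains only to record the elementary fact that two non-negative real numbers $a$ and $b$ satisfying ``$a \leq r \iff b \leq r$ for all $r > 0$'' must coincide, since otherwise one could choose a value of $r$ strictly between them and violate the equivalence. Applying this with $a \as f \. (\l x)$ and $b \as \l^{\a} \. f \. (x)$ gives $f \. (\l x) = \l^{\a} \. f \. (x)$, which is exactly Point~1. The only delicate point in the whole argument is the bookkeeping with the exponent $1/\a$ in the rescaling of the unit sublevel set; everything else reduces to a direct manipulation of sublevel sets, and notably the argument needs no assumption on whether $S_{1}(f)$ is empty or not.
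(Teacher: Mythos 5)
Your proof is correct and follows essentially the same route as the paper's: the same trivial handling of Point~1$\iff$Point~2, the same citation of Proposition~\ref{prop:ph-sl} for Point~1$\imp$Point~3, and the same chain of sublevel-set equivalences (using Point~3 twice) to reach $f\.(\l x) \leq r \iff \l^{\a} \. f\.(x) \leq r$ for all $r > 0$. The only cosmetic difference is the final extraction of the equality: the paper chooses $r \as f\.(\l x) + \e$ and $r \as \l^{\a} \. f\.(x) + \e$ and lets $\e \to 0$, whereas you pick an $r$ strictly between the two values to derive a contradiction --- both are equally valid elementary finishes.
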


\bigskip

\begin{proof} \ 

\textsf{Point~1}$\iff$\textsf{Point~2.} 
This is obvious. 

\medskip

\textsf{Point~1}$\imp$\textsf{Point~3.} 
This is Point~1 in Proposition~\ref{prop:ph-sl}. 

\medskip

\textsf{Point~3}$\imp$\textsf{Point~1.} 
Assume that Point~3 is satisfied, and fix $x \in C$ and a real number $\l > 0$. 

\smallskip

Then, for any real number $r > 0$, we have the equivalences 

\smallskip

\centerline{
$\disp \l^{\. \a} \. f \. (x) \ \leq \ r 
\iff 
x \in S_{\. \frac{r}{\l^{\. \a}}}(f) 
\ = \ 
\left( \. \frac{r}{\l^{\. \a}} \. \right)^{\!\. 1 \. / \. \a} \!\!\!\!\! S_{1 \.}(f) 
\ = \ 
\frac{1}{\l} \! \left[ r^{1 \. / \. \a} S_{1 \.}(f) \. \right] 
\ = \ 
\frac{1}{\l}S_{r}(f) 
\iff \. 
f \. (\l x) \ \leq \ r$~.
} 

\smallskip

Now, given any real number $\e > 0$, 
we obtain $\l^{\. \a} \. f \. (x) \leq f \. (\l x) + \e$ 
and $\. f \. (\l x) \leq \l^{\. \a} \. f \. (x) + \e$ 
by choosing $r \as \. f \. (\l x) + \e > 0$ and $r \as \l^{\. \a} \. f \. (x) + \e > 0$, respectively. 

Conclusion: since these two inequalities are true for any $\e > 0$, 
we have $\. f \. (\l x) = \l^{\. \a} \. f \. (x)$. 
\end{proof}

\bigskip

\begin{proposition} \label{prop:1-sublevel-f-span-V} 
   Given a real vector space $V \!\.$ and a function ${\. f \. : V \! \to \RR}$ 
   which is positively homogeneous, the sublevel set $S_{1 \.}(f)$ is absorbing, and hence satisfies 
   
   \smallskip
   
   \centerline{
   $\Cone{S_{1 \.}(f) \.} \ = \ \Aff{S_{1 \.}(f) \.} \ = \ \Vect{S_{1 \.}(f) \.} \ = \ V \!\!$~.
   } 
\end{proposition}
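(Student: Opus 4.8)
The plan is to verify directly that $S_{1}(f)$ is absorbing and then to invoke Point~4 of Proposition~\ref{prop:abs-cone}, which already packages the whole chain of equalities once absorption and non-emptiness are in hand.

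First I would record that $S_{1}(f)$ is non-empty. Since $f$ is positively homogeneous of degree $1$ on the cone $V \!\!$, Remark~\ref{rem:ph-z} gives $f \. (0) = 0$, so $0 \in S_{1}(f)$; in particular $S_{1}(f) \neq \Oset \.$. This observation is what will later allow us to upgrade the purely conic identity to the affine and linear ones.

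Next I would prove the absorption property. Fix an arbitrary $x \in V \setmin \{ 0 \}$; by the very definition of absorption it suffices to produce a scalar $\l > 0$ with $x \in \l S_{1}(f)$, that is, with $f \. (x / \l) \leq 1$. Positive homogeneity rewrites this as the condition $f \. (x) \leq \l$. If $f \. (x) > 0$, the choice $\l \as f \. (x)$ yields $f \. (x / \l) = (1 / \l) f \. (x) = 1 \leq 1$, so $x \in \l S_{1}(f)$. If instead $f \. (x) \leq 0$, then already $f \. (x) \leq 0 \leq 1$, so $x \in S_{1}(f) = 1 \cdot S_{1}(f)$ and we may take $\l \as 1$. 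Either way $S_{1}(f)$ absorbs $x$, and since $x$ was an arbitrary non-zero vector, $S_{1}(f)$ is absorbing. (Remark~\ref{rem:abs-cone} would give the equivalent reformulation $x \in \Bcone{S_{1}(f)}$, but it is not needed here.)

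Finally I would conclude by Proposition~\ref{prop:abs-cone}. Its Point~4 first states the equivalence between absorption and $\Cone{S_{1}(f)} = V \!\!$, so the previous step yields $\Cone{S_{1}(f)} = V \!\!$; then, since we also established $S_{1}(f) \neq \Oset \.$, the ``Moreover'' part of that same point delivers $\Aff{S_{1}(f)} = \Vect{S_{1}(f)} = V \!\!$. Chaining these equalities produces exactly the claimed statement. There is no genuine obstacle in this argument: the only point requiring a little care is the case distinction on the sign of $f \. (x)$ when selecting the absorbing scalar, since the natural choice $\l = f \. (x)$ is only legitimate when $f \. (x)$ is positive.
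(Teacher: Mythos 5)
Your proof is correct and follows essentially the same route as the paper: you verify absorption directly from positive homogeneity and then conclude via Point~4 of Proposition~\ref{prop:abs-cone}, using the non-emptiness $0 \in S_{1}(f)$ supplied by Remark~\ref{rem:ph-z}, exactly as the paper does. The only difference is cosmetic: the paper avoids your case split on the sign of $f \. (x)$ by taking the single scalar $\l \as |f \. (x) \. | + 1 > 0$, which satisfies $f \. (x) \leq \l$ in all cases.
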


\bigskip

\begin{proof} 
Given ${x \in V \setmin \{ 0 \}}$, we can write ${x = \l y}$ with ${\l \as |f \. (x) \. | + 1 > 0}$ 
and ${y \as x \. / \. \l \in S_{1 \.}(f)}$ (since $\. f \.$ is positively homogeneous). 

\smallskip

This proves that $S_{1 \.}(f)$ is absorbing. 

\smallskip

Now, since ${S \as S_{1 \.}(f)}$ contains the origin by Remark~\ref{rem:ph-z}, it is not empty, 
and hence Point~4 in Proposition~\ref{prop:abs-cone} gives the equalities to be proved. 
\end{proof}

\bigskip

\begin{corollary} \label{cor:1-sublevel-f-span-C} 
   Given a cone $C \.$ in a real vector space $V \!\.$ 
   and ${\. f \. : C \to \RR}$ a positively homogeneous function, the following properties hold: 
   
   \begin{enumerate}
      \item $\Cone{S_{1 \.}(f) \.} = C \cup \{ 0 \} \.$. 
      
      \smallskip
      
      \item $\Vect{S_{1 \.}(f) \.} = \Vect{C}$. 
      
      \smallskip
      
      \item If $C \.$ is not empty, then we have $\Aff{S_{1 \.}(f) \.} = \Vect{C}$. 
   \end{enumerate} 
\end{corollary}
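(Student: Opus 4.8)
The plan is to establish the three identities in sequence, each feeding into the next, all resting on the single observation that for ${x \in C}$ the scalar ${\l \as |f \. (x)| + 1 > 0}$ sends ${x \. / \. \l}$ into ${S_{1}(f)}$. I would prove Point~1 by double inclusion. For ${\Cone{S_{1}(f)} \inc C \cup \{ 0 \}}$, note that ${S_{1}(f) \inc C}$ gives ${S_{1}(f) \cup \{ 0 \} \inc C \cup \{ 0 \}}$; since ${C \cup \{ 0 \}}$ is a pointed cone and ${\Cone{S_{1}(f)}}$ is by definition the smallest cone containing ${S_{1}(f) \cup \{ 0 \}}$, the inclusion follows. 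For the reverse inclusion, the origin lies in ${\Cone{S_{1}(f)}}$ automatically, and for any nonzero ${x \in C}$ the vector ${y \as x \. / \. \l}$ with ${\l \as |f \. (x)| + 1}$ belongs to $C$ (as $C$ is a cone) and satisfies ${f \. (y) = f \. (x) \. / \. \l < 1}$ by positive homogeneity, so ${y \in S_{1}(f)}$ and ${x = \l y \in \Cone{S_{1}(f)}}$. This is exactly the argument used for Proposition~\ref{prop:1-sublevel-f-span-V}, now carried out inside $C$ rather than in all of $V$.

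Point~2 is then immediate: the inclusion ${S_{1}(f) \inc C}$ yields ${\Vect{S_{1}(f)} \inc \Vect{C}}$, while Point~1 together with the identity ${\Vect{{\Cone{S}} \.} = \Vect{S}}$ from Proposition~\ref{prop:vect-vs-aff-bis} gives ${\Vect{C} = \Vect{C \cup \{ 0 \} \.} = \Vect{{\Cone{S_{1}(f)}} \.} = \Vect{S_{1}(f) \.}}$, hence equality.

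For Point~3 I would assume ${C \neq \Oset}$, so that ${S_{1}(f) \neq \Oset}$ by Point~3 of Proposition~\ref{prop:ph-sl}. The key step is to show that ${0 \in \Aff{S_{1}(f) \.}}$ even when $C$ is blunt and hence ${0 \notin S_{1}(f)}$. I would pick any ${x \in S_{1}(f)}$: if ${x = 0}$ this is trivial, and if ${x \neq 0}$ I would use that ${S_{1}(f) \cup \{ 0 \}}$ is star-shaped (Point~2 of Proposition~\ref{prop:ph-sl}) to get ${(0 , 1) x \inc S_{1}(f)}$, whence ${\RR x = \Aff{\. (0 , 1) x} \inc \Aff{S_{1}(f) \.}}$ by Point~1 of Proposition~\ref{prop:vect-vs-aff}; in particular ${0 \in \Aff{S_{1}(f) \.}}$. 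Once this is secured, Proposition~\ref{prop:vect-vs-aff-bis} gives ${\Aff{S_{1}(f) \.} = \Vect{S_{1}(f) \.}}$, and Point~2 concludes with ${\Vect{S_{1}(f) \.} = \Vect{C}}$.

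The only genuine obstacle is the blunt-cone case of Point~3: a single point of ${S_{1}(f)}$ need not place $0$ in its affine hull, so one must exhibit an entire open segment ${(0 , 1) x}$ lying in ${S_{1}(f)}$---whose affine hull is the full line ${\RR x \ni 0}$---and it is precisely the star-shapedness of ${S_{1}(f) \cup \{ 0 \}}$ that supplies such a segment.
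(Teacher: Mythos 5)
Your proof is correct, but it takes a genuinely different route from the paper's on two of the three points. For Point~1, the paper does not argue directly: it extends $f$ by zero to a positively homogeneous function $g$ on all of $V\!$, applies Proposition~\ref{prop:1-sublevel-f-span-V} to get $\Cone{S_{1}(g)} = V\!$, and then localizes to $C$ via the identity $\Bcone{A \cap C} = \Bcone{A} \cap C$ of Point~4 in Remark~\ref{rem:cone}, using $S_{1}(g) \cap C = S_{1}(f)$. You instead re-run the absorption argument of Proposition~\ref{prop:1-sublevel-f-span-V} inside $C$ itself (with $\lambda := |f(x)|+1$, noting $x/\lambda \in C$ because $C$ is a cone and $f(x/\lambda) = f(x)/\lambda < 1$); this is more self-contained and spares you the small check that the extension $g$ is positively homogeneous on $V\!$ (which the paper owes to the complement of a cone being a cone), at the cost of not reusing the global proposition. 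Point~2 is essentially the same in both treatments, funnelling through $\Vect{\Cone{S}} = \Vect{S}$ from Proposition~\ref{prop:vect-vs-aff-bis}. For Point~3, the paper splits into the cases $S \setminus \{0\} = \Oset$ and $S \setminus \{0\} \neq \Oset$ and, in the latter, runs a sandwich of affine hulls through $\widehat{S}$ and $[\,(0,1)S] \setminus \{0\}$ via Point~2 of Proposition~\ref{prop:vect-vs-aff}, whereas you prove the single cleaner fact that $0 \in \Aff{S_{1}(f)}$: a nonzero $x \in S_{1}(f)$ yields $(0,1)x \inc S_{1}(f)$ by star-shapedness of $S_{1}(f) \cup \{0\}$, hence $\RR x = \Aff{(0,1)x} \inc \Aff{S_{1}(f)}$ by Point~1 of Proposition~\ref{prop:vect-vs-aff}, and Proposition~\ref{prop:vect-vs-aff-bis} then converts $0 \in \Aff{S_{1}(f)}$ into $\Aff{S_{1}(f)} = \Vect{S_{1}(f)}$. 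Your version needs only Point~1 (not Point~2) of Proposition~\ref{prop:vect-vs-aff}, handles the degenerate case $S_{1}(f) = \{0\}$ uniformly, and even gives the slightly stronger intermediate fact that $\Aff{S_{1}(f)}$ contains the whole line $\RR x$ for every nonzero $x \in S_{1}(f)$; the paper's version buys maximal reuse of its general affine-hull lemmas.
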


\bigskip

\begin{proof} \ 

\textsf{Point~1.} 
Let us consider the function ${g : V \! \to \RR}$ defined by 
${g(x) \. \as \. f \. (x)}$ for ${x \in C}$ and ${g(x) \. \as 0}$ for ${x \not \in C \.}$. 

\smallskip

Since $g$ is positively homogeneous, 
we can write $\Cone{S_{1 \.}(g) \.} = V \!$ by Proposition~\ref{prop:1-sublevel-f-span-V}, and hence 

\smallskip

\centerline{
$\Cone{S_{1 \.}(g) \.} \cap (C \cup \{ 0 \} \. ) 
\ = \ 
V \cap (C \cup \{ 0 \} \. ) 
\ = \ 
C \cup \{ 0 \} \.$~,
} 

\smallskip

which yields ${\Cone{S_{1 \.}(f) \.} = C \cup \{ 0 \}}$ owing to Point~4 in Remark~\ref{rem:cone} 
with ${A \as S_{1 \.}(g)}$ and since we obviously have ${S_{1 \.}(g) \cap C = S_{1 \.}(f)}$. 

\medskip

\textsf{Point~2.} 
From Point~1 above, we immediately obtain 
${\Aff{{\Cone{S_{1 \.}(f) \.}} \.} = \Aff{C \cup \{ 0 \} \.}}$, 
and this yields ${\Vect{S_{1 \.}(f) \.} = \Vect{C}}$ 
since one has ${\Aff{C \cup \{ 0 \} \.} = \Vect{C}}$ 
by Proposition~\ref{prop:vect-vs-aff-bis} 
and ${\Aff{{\Cone{S_{1 \.}(f) \.}} \.} = \Vect{S_{1 \.}(f) \.}}$ 
by Point~1 in Proposition~\ref{prop:abs-cone}. 

\medskip

\textsf{Point~3.} 
Assume that $C$ is not empty, 
and let us prove that the set ${S \as S_{1 \.}(f)}$ satisfies ${\Aff{S} = \Vect{S}}$ 
by considering two cases. 

\smallskip

\textasteriskcentered \ 
First case: $S \,\. \setmin \{ 0 \} = \Oset \.$. 

\smallskip

Then $S$ contains the origin since it is not empty owing to Point~3 in Proposition~\ref{prop:ph-sl}, 
and hence we get ${0 \in \Aff{S}}$, which yields ${\Aff{S} = \Vect{S}}$ 
by Point~3 in Proposition~\ref{prop:vect-vs-aff-bis}. 

\smallskip

\textasteriskcentered \ 
Second case: $S \,\. \setmin \{ 0 \} \neq \Oset \.$. 

\smallskip

Since $S \cup \{ 0 \}$ is star-shapped by Point~2 in Proposition~\ref{prop:ph-sl}, we have 

\smallskip

\centerline{
$(0 , 1) S \cup \{ 0 \} 
\ = \ 
(0 , 1) (S \cup \{ 0 \} \. ) \ \inc \ S \cup \{ 0 \} \.$~,
} 

%\smallskip

which implies 

%\smallskip

\centerline{
$[ \. (0 , 1) S] \,\. \setmin \{ 0 \} 
\ \inc \ 
S \,\. \setmin \{ 0 \} 
\ \inc \ 
S \ \inc \ \widehat{S}$~.
} 

\smallskip

Therefore, the condition $S \,\. \setmin \{ 0 \} \neq \Oset$ yields 

\smallskip

\centerline{
$\Aff{\widehat{S}} 
\ = \ 
\Aff{[ \. (0 , 1) S] \,\. \setmin\{ 0 \} \.} \ \inc \ \Aff{S} 
\ \inc \ 
\Aff{\widehat{S}} \ = \ \Aff{{\Cone{S}} \.}$
} 

\smallskip

according to Point~2 in Proposition~\ref{prop:vect-vs-aff}, and hence we obtain 
${\Aff{S} = \Aff{\widehat{S}} = \Aff{{\Cone{S}} \.}}$. 

\smallskip

But, on the other hand, we have ${\Aff{{\Cone{S}} \.} = \Vect{S}}$ 
by Point~1 in Proposition~\ref{prop:abs-cone}, which implies ${\Aff{S} = \Vect{S}}$, 
and finally we get ${\Aff{S} = \Vect{C}}$ by Point~2 above. 
\end{proof}

\bigskip

A particular important class of positively homogeneous functions is given by gauge functions. 

\bigskip

\begin{definition} 
   Given a subset $S$ of a real vector space $V \!\!$, 
   the \emph{gauge} function $p_{S} : V \! \to \clos{\RR}$ of $S$ is defined by 
   
   %\smallskip
   
   \centerline{
   $p_{S}(x) \. \as \! \inf{\! \{ \l \geq 0 \st x \in \l S \}} \in [0 , {+\infty}]$~.
   } 
\end{definition}

\bigskip

\begin{remark} \label{rem:0-gauge} \ 

\begin{enumerate}[1)]
   \item If $S$ is void, then we have $p_{S} = {+\infty}$ 
   since the empty set does not absorb any vector in $V \!\.$. 
   
   \medskip
   
   \item If $S$ is not empty, then we obviously have $p_{S}(0) = 0$. 
   
   \medskip
   
   \item For any $x \in V \!$ which satisfies $(0 , {+\infty}) x \inc S \.$, 
   we have of course $p_{S}(x) = 0$. 
   
   \medskip
   
   \item Moreover, given any subsets $A$ and $B$ of $V \!$ which satisfy $A \inc B$, 
   we have $p_{B} \leq p_{A}$. 
\end{enumerate} 
\end{remark}

\bigskip

\begin{proposition} \label{prop:gauge-vect} 
   The gauge function $p_{S}$ of a subset $S \.$ of a real vector space $V \!\.$ 
   satisfies the following properties: 
   
   \begin{enumerate}
      \item For any $x \in V \setmin \{ 0 \}$, we have the equivalences 
      
      \smallskip
      
      \centerline{
      \hspace{30pt} $S \. \ \mbox{absorbs} \ x 
      \iff 
      x \in \Bcone{S}
      \iff 
      S \cap (0 , {+\infty}) x \ \neq \ \Oset 
      \iff 
      p_{S}(x) \in [0 , {+\infty})$~.
      } 
      
      \smallskip
      
      \noindent As a consequence, one has $\Cone{S} = \inv{p_{S}}(\RR)$ 
      in case when $S \.$ is not void. 
      
      %\smallskip
      
      \item For any ${x \in V \setmin \{ 0 \}}$, we have 
      ${\disp p_{S}(x) \: = \: \. \inf{\! \{ \l > 0 \st x \in \l S \}} 
      \: = \: \frac{1}{\sup{\! \{ \m > 0 \st \m x \in S \}}}}$ \linebreak
      
      \vspace{-14pt}
      
      \noindent (with the conventions $1 \. / \. 0 \as {+\infty}$ 
      and $1 \. / \! ({+\infty}) \. \as 0$). 
      
      \smallskip
      
      \item The gauge $p_{S}$ is positively homogeneous 
      (with the convention $\l \! \times \! ({+\infty}) \. \as {+\infty}$ 
      for any real number $\l > 0$). 
      
      \smallskip
      
      \item We have $p_{S} = p_{\widehat{S}}$ 
      (the gauge function of the star-shaped hull $\widehat{S}$ of $S \.$). 
      
      \smallskip
      
      \item In case when $S \.$ is star-shaped, we have 
      $\inv{p_{S}}([0 , 1) \. ) \inc S \inc \inv{p_{S}}([0 , 1])$. 
   \end{enumerate} 
\end{proposition}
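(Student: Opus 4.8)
The plan is to dispatch the five points by unwinding the definition $p_{S}(x) = \inf{\{ \l \geq 0 \st x \in \l S \}}$, together with the substitution $\m = 1 \. / \. \l$ and the rescaling $x \mapsto \l x$, paying attention only to how the value $\l = 0$ interacts with a non-zero $x$. For Point~1, I would read the first two equivalences straight off Remark~\ref{rem:abs-cone}, which already identifies ``$S$ absorbs $x$'' with $x \in \Bcone{S}$ for $x \neq 0$. The equivalence with $S \cap (0 , {+\infty}) x \neq \Oset$ is merely the substitution $\m = 1 \. / \. \l$, rewriting $x \in \l S$ as $\m x \in S$ with $\m > 0$. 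For the last equivalence I would note that, since $x \neq 0$ and $0 \, S \inc \{ 0 \}$, the scalar $\l = 0$ can never satisfy $x \in \l S$; hence $\{ \l \geq 0 \st x \in \l S \} = \{ \l > 0 \st x \in \l S \}$, and this set is non-empty (equivalently, its infimum is finite) exactly when $S$ absorbs $x$. The consequence $\Cone{S} = \inv{p_{S}}(\RR)$ then follows by adjoining the origin: for non-void $S$ one has $p_{S}(0) = 0$ by Remark~\ref{rem:0-gauge}, and $\Cone{S} = \Bcone{S} \cup \{ 0 \}$.

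Points~2 and~3 are pure computations using the same two moves. For Point~2, the restriction to $\l > 0$ is exactly the observation just made that $\l = 0$ is irrelevant when $x \neq 0$, and the reciprocal formula is the substitution $\m = 1 \. / \. \l$ together with the stated conventions, turning an infimum over $\l$ into the reciprocal of a supremum over $\m$. For Point~3 and $\l > 0$, I would reindex the defining set of $p_{S}(\l x)$ by $\n = \m \. / \. \l$, so that $\l x \in \m S$ is equivalent to $x \in \n S$, whence $p_{S}(\l x) = \l \inf{\{ \n \geq 0 \st x \in \n S \}} = \l \. p_{S}(x)$, with the convention $\l \! \times \! ({+\infty}) = {+\infty}$ applied when $p_{S}(x)$ is infinite.

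Point~4 requires a genuine, if modest, argument. One inequality is free: $S \inc \widehat{S}$ gives $p_{\widehat{S}} \leq p_{S}$ by Remark~\ref{rem:0-gauge}. For the reverse, I would show that $p_{S}(x) \leq \l$ for every $\l \geq 0$ with $x \in \l \widehat{S}$. Using $\widehat{S} = [0 , 1] S$, such a point writes $x = (\l t) s$ with $t \in [0 , 1]$ and $s \in S$; if $\l t > 0$ this exhibits $x \in (\l t) S$ with $\l t \leq \l$, so $p_{S}(x) \leq \l$, while if $\l t = 0$ then $x = 0$ and $p_{S}(0) = 0 \leq \l$ (note $\widehat{S} \neq \Oset$ forces $S \neq \Oset$). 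Taking the infimum over the admissible $\l$ yields $p_{S} \leq p_{\widehat{S}}$, hence equality.

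Finally, for Point~5 I would treat the two inclusions separately. The right-hand one is immediate: $x \in S = 1 \, S$ places $1$ in the defining set, so $p_{S}(x) \leq 1$. For the left-hand one, $p_{S}(x) < 1$ forces some $\l \in [0 , 1)$ with $x \in \l S$ (otherwise every admissible $\l$ would be $\geq 1$, keeping the infimum $\geq 1$); writing $x = \l s$ with $s \in S$ and invoking star-shapedness gives $x \in S$, the degenerate case $\l = 0$ reducing to $x = 0 \in S$ since a non-empty star-shaped set contains the origin (and if $S = \Oset$ the inclusion is vacuous, as $p_{S} \equiv {+\infty}$). The only delicate bookkeeping throughout is the treatment of $\l = 0$ against $x \neq 0$ and the possible emptiness of $S$; the two substitutions carry everything else, so the sole real (and small) obstacle is the decomposition step in Point~4 and the consistent use of the conventions $1 \. / \. 0 = {+\infty}$ and $1 \. / ({+\infty}) = 0$.
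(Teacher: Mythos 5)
Your proposal is correct and follows essentially the same route as the paper: Point~1 via Remark~\ref{rem:abs-cone} and the non-emptiness of the defining set, Points~2 and~3 by the substitutions $\m = 1 \. / \. \l$ and $\n = \m \. / \. \l$, Point~4 by the decomposition $\widehat{S} = [0,1] S$ giving $p_{S}(x) \leq \l t \leq \l$, and Point~5 by the same two one-line inclusions. Your extra bookkeeping on the degenerate cases ($\l = 0$ against $x \neq 0$, empty $S$, and the fact that a non-empty star-shaped set contains the origin) is sound and slightly more explicit than the paper's treatment, but introduces no new ideas.
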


\bigskip

\begin{proof} \ 

\textsf{Point~1.} 
Given $x \in V \setmin \{ 0 \}$, we have the equivalences 
%%%%%%%%%%
\begin{eqnarray*} % ===> HAL ne gère pas \\ avec l'environnement align !!!
   S \. \ \mbox{absorbs} \ x 
   & \iff & 
   x \in \Bcone{S} \quad \mbox{(see Remark~\ref{rem:abs-cone})} \\ 
   & \iff & 
   S \cap (0 , {+\infty}) x \ \neq \ \Oset \\ 
   & \iff & 
   \{ \l \geq 0 \st x \in \l S \} \ \neq \ \Oset \\ 
   & \iff & 
   p_{S}(x) \in [0 , {+\infty})~. 
\end{eqnarray*} 
%%%%%%%%%%
This proves that we have $\Bcone{S} = \inv{p_{S}}(\RR) \:\! \setmin \{ 0 \}$. 

\smallskip

Moreover, in case when $S$ is not void, 
we have ${0 \in \inv{p_{S}}(\RR)}$ by Point~2 in Remark~\ref{rem:0-gauge}, and hence we get 

\vspace{-20pt}

%%%%%%%%%%
\begin{eqnarray*} 
   \Cone{S} \ = \ \Bcone{S} \cup \{ 0 \} 
   & = & 
   \big[ \inv{p_{S}}(\RR) \cap (V \setmin \{ 0 \} \. ) \. \big] \cup \{ 0 \} \\ 
   & = & 
   \big[ \inv{p_{S}}(\RR) \cup \{ 0 \} \. \big] 
   \cap \big[ \. (V \setmin \{ 0 \} \. ) \cup \{ 0 \} \. \big] \\ 
   & = & 
   \inv{p_{S}}(\RR) \cup \{ 0 \} \ = \ \inv{p_{S}}(\RR)~. 
\end{eqnarray*} 
%%%%%%%%%%

\medskip

\textsf{Point~2.} 
For any $x \in V \setmin \{ 0 \}$, we have $x \not \in 0 \,\. S \.$, 
and hence we get the first equality. 

\smallskip

On the other hand, the second equality is a mere consequence of classical properties 
about the infimum and the supremum in $\RR$ with $\m \as 1 \. / \. \l$. 

\medskip

\textsf{Point~3.} 
Given $x \in V \!$ and a real number $t > 0$, we have 
%%%%%%%%%%
\begin{eqnarray*} 
   p_{S}(t x) \! 
   & = & \! \. 
   \inf{\! \{ \l \geq 0 \st t x \in \l S \}} \\ 
   & = & \! \. 
   \inf{\! \{ \l \geq 0 \st x \in (\l \. / \. t) S \}} \\ 
   & = & \! \. 
   \inf{\! \{ t (\l \. / \. t) \st \l \geq 0 \ \ \mbox{and} \ \ x \in (\l \. / \. t) S \}} 
   \ = \ \. 
   \inf{\! \{ t \m \st \m \geq 0 \ \ \mbox{and} \ \ x \in \m S \}} \ = \ t p_{S}(x)~. 
\end{eqnarray*} 
%%%%%%%%%%

%\medskip
\pagebreak

\textsf{Point~4.} 
First of all, the inclusion $S \inc \widehat{S}$ implies $p_{\widehat{S}} \leq p_{S}$ 
by Point~4 in Remark~\ref{rem:0-gauge}. 

\smallskip

On the other hand, given ${x \in V \!}$ and a real number ${\l \geq 0}$ 
such that we have ${x \in \l \widehat{S} \.}$, 
there exists ${t \in [0 , 1]}$ which satisfies ${x \in \l t S \.}$. 

\smallskip

This yields $p_{S}(x) \leq \l t$, and hence we get $p_{S}(x) \leq \l$ since one has $t \leq 1$. 

\smallskip

The inequality $p_{S}(x) \leq p_{\widehat{S}}(x)$ then follows. 

\medskip

\textsf{Point~5.} 

\textasteriskcentered \ 
For any $x \in S \.$, one can write $1 \in \{ \l \geq 0 \st x \in \l S\}$, 
which yields $p_{S}(x) \leq 1$. 

\smallskip

This proves the inclusion $S \inc \inv{p_{S}}([0 , 1])$. 

\smallskip

\textasteriskcentered \ 
For any ${x \in V \!}$ which satisfies ${p_{S}(x) < 1}$, 
there exists ${\l \in (0 , 1)}$ such that we have ${x \in \l S \.}$, 
and this yields ${x \in S}$ since $S$ is star-shaped. 

\smallskip

This proves the inclusion $\inv{p_{S}}([0 , 1) \. ) \inc S \.$. 
\end{proof}

\medskip %\bigskip

\begin{remark*} 
It is to be mentioned that owing to Point~4 in Proposition~\ref{prop:gauge-vect} 
it is enough to deal with \emph{star-shaped} subsets when considering gauge functions. 
\end{remark*}

\bigskip

It is now time to give the relationship between non-negative positively homogeneous functions 
and gauge functions. For this purpose, let us denote by ${j : \RR \to \bar{\RR}}$ 
the canonical inclusion of $\RR$ into $\bar{\RR}$. 

\medskip %\bigskip

\begin{proposition} \label{prop:f=p} 
   Let $C \.$ be a pointed cone in a real vector space, $S \.$ a star-shaped subset of $C \.$ 
   and $p_{S}$ the gauge function of $S \.$. 
   Then any non-negative function ${\. f \. : C \to \RR}$ which is positively homogeneous 
   satisfies the following equivalence: 
   \[
   \rest{(p_{S}){\.}}{C} \ = \ j \comp \:\!\! f \. 
   \qquad \iff \qquad \. 
   \inv{f}([0 , 1) \. ) \ \inc \ S \ \inc \ \inv{f}([0 , 1]) \ = \ S_{1 \.}(f)~.
   \] 
\end{proposition}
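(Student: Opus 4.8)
The plan is to establish the equivalence by proving the two implications separately, relying on the properties of the gauge $p_{S}$ of a star-shaped set collected in Proposition~\ref{prop:gauge-vect} together with the positive homogeneity of both $f$ and $p_{S}$.

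For the direction $(\Rightarrow)$, I would simply translate Point~5 of Proposition~\ref{prop:gauge-vect}. Assuming $\rest{(p_{S})}{C} = j \comp f$, the gauge agrees with $f$ on $C$. Since $S$ is star-shaped, Point~5 gives $\inv{p_{S}}([0,1)) \inc S \inc \inv{p_{S}}([0,1])$. Intersecting with $C$ and using that $p_{S}$ coincides with $f$ there (recall $S \inc C$) turns this directly into $\inv{f}([0,1)) \inc S \inc \inv{f}([0,1])$, the last set being $S_{1 \.}(f)$ by the definition of the sublevel set. This step is a mere dictionary translation and should cause no trouble.

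The substance of the proof is the direction $(\Leftarrow)$, where I would fix $x \in C$ and show $p_{S}(x) = f(x)$ by using homogeneity to propagate the level-one data encoded by $S$ to every scale. The case $x = 0$ is immediate: $f(0) = 0$ by Remark~\ref{rem:ph-z}, and since $0 \in \inv{f}([0,1)) \inc S$ the set $S$ is non-void, whence $p_{S}(0) = 0$ by Remark~\ref{rem:0-gauge}. For $x \neq 0$ I would invoke the formula $p_{S}(x) = 1 / \sup\{\mu > 0 \st \mu x \in S\}$ from Point~2 of Proposition~\ref{prop:gauge-vect} and analyse the set $M \as \{\mu > 0 \st \mu x \in S\}$ through the hypothesis and the relation $f(\mu x) = \mu f(x)$: the inclusion $S \inc \inv{f}([0,1])$ forces $\mu f(x) \leq 1$ for every $\mu \in M$, while $\inv{f}([0,1)) \inc S$ puts $\mu$ into $M$ as soon as $\mu f(x) < 1$. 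When $f(x) > 0$ this traps $M$ between $(0 , 1 / f(x))$ and $(0 , 1 / f(x)]$, so $\sup M = 1 / f(x)$ and hence $p_{S}(x) = f(x)$; when $f(x) = 0$ the entire ray $(0 , {+\infty}) x$ lies in $S$, giving $p_{S}(x) = 0 = f(x)$ directly from Point~3 of Remark~\ref{rem:0-gauge}.

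The one point requiring care is the endpoint $\mu = 1 / f(x)$ in the case $f(x) > 0$: membership of $x / f(x)$ in $S$ is genuinely ambiguous, but since $M$ contains every strictly smaller scalar and excludes every strictly larger one, the value of $\sup M$ is unaffected and no separate discussion is needed. Everything else is routine bookkeeping with the homogeneity identity and the gauge conventions, and the pointedness of $C$ enters only to guarantee $0 \in C$, so that the computation of $p_{S}$ at the origin is legitimate.
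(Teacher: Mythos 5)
Your proof is correct. The forward direction is exactly the paper's: intersect the inclusions $\inv{p_{S}}([0,1)) \inc S \inc \inv{p_{S}}([0,1])$ from Point~5 of Proposition~\ref{prop:gauge-vect} with $C$ and translate via the hypothesis $\rest{(p_{S})}{C} = j \comp f$. In the converse direction, however, you take a genuinely different (and somewhat more direct) route. The paper sandwiches $p_{S}$ between the two auxiliary gauges $p_{\inv{f}([0,1])}$ and $p_{\inv{f}([0,1))}$ via the anti-monotonicity of gauges under inclusion (Point~4 of Remark~\ref{rem:0-gauge}), and then computes each of these boundary gauges to equal $j \comp f$ on $C$ using the infimum formula of Point~2 of Proposition~\ref{prop:gauge-vect} together with the equivalence $x \in \l \inv{f}([0,1]) \iff f(x) \leq \l$; the sandwich then forces $\rest{(p_{S})}{C} = j \comp f$ without ever evaluating $p_{S}$ on $S$ itself. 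You instead work ray by ray with the reciprocal-supremum formula from the same Point~2, trapping the trace set $M = \{\mu > 0 \st \mu x \in S\}$ between $(0, 1/f(x))$ and $(0, 1/f(x)]$, which pins down $\sup M$ and hence $p_{S}(x)$ directly; your explicit observation that the ambiguous endpoint $\mu = 1/f(x)$ cannot affect the supremum is precisely the point that makes this work, and it is the ray-level shadow of the paper's two-gauge sandwich. Both arguments handle $x = 0$ identically (pointedness of $C$, Remark~\ref{rem:ph-z}, and Point~2 of Remark~\ref{rem:0-gauge}), and your separate treatment of $f(x) = 0$ via Point~3 of Remark~\ref{rem:0-gauge} is consistent with the paper's conventions $1/0 = +\infty$ and $1/(+\infty) = 0$. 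What the paper's version buys is reusability --- the identities $\rest{(p_{\inv{f}([0,1])})}{C} = \rest{(p_{\inv{f}([0,1))})}{C} = j \comp f$ are statements of independent interest about the extreme star-shaped sets --- whereas yours buys concreteness: a single self-contained computation with no auxiliary gauges, at the cost of a slightly more delicate case analysis on each ray.
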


\smallskip %\bigskip

\begin{proof} \ 

$(\imp)$. 
Assume that we have $\rest{(p_{S}){\.}}{C} = j \comp \:\!\! f \!$. 

\smallskip

Since we have $\inv{p_{S}}([0 , 1) \. ) \inc S \inc \inv{p_{S}}([0 , 1])$ 
by Point~5 in Proposition~\ref{prop:gauge-vect}, we get 
\[
\inv{( \. \rest{(p_{S}){\.}}{C}) \.}([0 , 1) \. ) 
\ = \ 
\inv{p_{S}}([0 , 1) \. ) \cap C 
\ \inc \ 
S \cap C 
\ \inc \ 
\inv{p_{S}}([0 , 1]) \cap C 
\ = \ 
\inv{( \. \rest{(p_{S}){\.}}{C}) \.}([0 , 1])~,
\] 
which writes 
$\. \inv{f}([0 , 1) \. ) \inc S \inc \inv{f}([0 , 1])$ by using $S \cap C = S \.$. 

\medskip

$(\con)$. 
Assume that we have $\. \inv{f}([0 , 1) \. ) \inc S \inc \inv{f}([0 , 1])$. 

\smallskip

Then Point~4 in Remark~\ref{rem:0-gauge} 
implies $p_{\. \inv{f}([0 , 1])} \leq p_{S} \leq p_{\. \inv{f}([0 , 1) \. )}$. 

\smallskip

On the other hand, given $x \in C$ and a real number $\l > 0$, we can write the equivalences 
%%%%%%%%%%
\begin{eqnarray*} 
   x \in \l \inv{f}([0 , 1]) 
   & \iff & 
   x \. / \. \l \in \:\!\! \inv{f}([0 , 1]) \\ 
   & \iff & \. 
   f \. (x \. / \. \l) \leq 1 \\ 
   & \iff & \. 
   f \. (x) \leq \l 
   \quad \mbox{(since $\. f \.$ is positively homogeneous)}~. 
\end{eqnarray*} 
%%%%%%%%%%
If we have $x \neq 0$, then Point~2 in Proposition~\ref{prop:gauge-vect} yields 

\smallskip

\centerline{
$p_{\. \inv{f}([0 , 1])}(x) 
\ = \ 
\inf{\! \{ \l > 0 \st x \in \l \inv{f}([0 , 1]) \. \}} \. 
\ = \ 
\inf{\! \{ \l > 0 \st \. f \. (x) \leq \l \}} \ = \ \:\!\! f \. (x)$~.
} 

\smallskip

If we have ${x = 0 \in C \.}$, then $\. \inv{f}([0 , 1])$ contains the origin since $\. f \.$ 
satisfies $\. f \. (0) = 0$ by Remark~\ref{rem:ph-z}. 

\smallskip

Therefore, since $\. \inv{f}([0 , 1])$ is not empty, we get ${p_{\. \inv{f}([0 , 1])}(0) = 0}$ 
by Point~2 in Remark~\ref{rem:0-gauge}, 
and hence we can write $p_{\. \inv{f}([0 , 1])}(x) = f \. (x) = 0$. 

\smallskip

This proves $\rest{(p_{\. \inv{f}([0 , 1])}){\.}}{C} = j \comp \:\!\! f \!$. 

\smallskip

With the same reasoning, we also obtain 
$\rest{(p_{\. \inv{f}([0 , 1) \. )}){\.}}{C} = j \comp \:\!\! f \!$. 

\smallskip

Conclusion: summing up, we have proved 
$j \comp \:\!\! f \leq \rest{(p_{S}){\.}}{C} \leq j \comp \:\!\! f \!$. 
\end{proof}

\bigskip
%\pagebreak

It is to be noticed that Proposition~\ref{prop:f=p} is a generalization 
of the result~\cite[Lemma~5.50, Point~1, page~192]{AliBor06} 
in the situation where $C$ is not reduced to the whole space $V \!\.$. 

\bigskip

Actually, the following result shows that Proposition~\ref{prop:f=p} extends to blunt cones. 

\smallskip %\bigskip

\begin{corollary} \label{cor:f=p} 
   Let $C \.$ be a cone in a real vector space, $S \.$ a star-shaped subset of $C \cup \{ 0 \}$ 
   and $p_{S}$ the gauge function of $S \.$. 
   Then any non-negative function ${\. f \. : C \to \RR}$ which is positively homogeneous 
   satisfies the following equivalence: 
   \[
   \rest{(p_{S}){\.}}{C} \ = \ j \comp \:\!\! f \. 
   \qquad \iff \qquad \. 
   \inv{f}([0 , 1) \. ) \ \inc \ S \ \inc \ \inv{f}([0 , 1]) \cup \{ 0 \} 
   \ = \ 
   S_{1 \.}(f) \cup \{ 0 \}~.
   \] 
\end{corollary}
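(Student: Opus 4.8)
The plan is to deduce everything from Proposition~\ref{prop:f=p} applied to the \emph{pointed} cone ${C' \as C \cup \{ 0 \}}$. First I would note that $C'$ is a pointed cone and that $S$ is a star-shaped subset of $C'$ by hypothesis. Next I would extend $f$ to a function ${\tilde{f} : C' \to \RR}$ by setting ${\tilde{f}(0) \as 0}$ and ${\rest{\tilde{f}}{C} \as f}$; this is consistent in the case ${0 \in C}$ since then ${f \. (0) = 0}$ by Remark~\ref{rem:ph-z}. A one-line check (using that $C$ is a cone, so ${\l x \in C}$ for ${x \in C}$ and ${\l > 0}$) shows that $\tilde{f}$ is again non-negative and positively homogeneous, so Proposition~\ref{prop:f=p} applies to the triple ${(C' , S , \tilde{f})}$ and yields
\[
\rest{(p_{S}){\.}}{C'} \ = \ j \comp \:\!\! \tilde{f}
\qquad \iff \qquad
\inv{\tilde{f}}([0 , 1) \. ) \ \inc \ S \ \inc \ \inv{\tilde{f}}([0 , 1])~.
\]

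The second step is to translate the gauge equality on the left back to $C$. Restricting ${\rest{(p_{S}){\.}}{C'} = j \comp \tilde{f}}$ to ${C \inc C'}$ immediately gives ${\rest{(p_{S}){\.}}{C} = j \comp f}$. For the converse I would observe that ${\rest{(p_{S}){\.}}{C} = j \comp f}$ forces $p_{S}$ to be finite on $C$, hence ${S \neq \Oset}$ (otherwise ${p_{S} = {+\infty}}$ by Remark~\ref{rem:0-gauge}); then the only new point to inspect is the origin, where ${\tilde{f}(0) = 0 = p_{S}(0)}$ by Point~2 in Remark~\ref{rem:0-gauge}. Thus the two gauge equalities are equivalent.

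Finally I would rewrite the sublevel conditions in terms of $f$. Since ${\tilde{f}(0) = 0 < 1}$ and ${\rest{\tilde{f}}{C} = f}$, one has ${\inv{\tilde{f}}([0 , 1) \. ) = \inv{f}([0 , 1) \. ) \cup \{ 0 \}}$ and ${\inv{\tilde{f}}([0 , 1]) = \inv{f}([0 , 1]) \cup \{ 0 \} = S_{1 \.}(f) \cup \{ 0 \}}$. The key remark---and the only place where star-shapedness re-enters---is that a \emph{nonempty} star-shaped set always contains the origin (take ${t = 0}$ in the definition), so ${\inv{f}([0 , 1) \. ) \cup \{ 0 \} \inc S}$ is equivalent to ${\inv{f}([0 , 1) \. ) \inc S}$. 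Substituting these identities transforms the right-hand side of the displayed equivalence exactly into ${\inv{f}([0 , 1) \. ) \inc S \inc \inv{f}([0 , 1]) \cup \{ 0 \} = S_{1 \.}(f) \cup \{ 0 \}}$, which is the claim.

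The hard part is not any computation but the bookkeeping at the origin: one must ensure that the extension ${f \mapsto \tilde{f}}$ introduces no spurious behaviour, that the passage from $C$ to ${C \cup \{ 0 \}}$ is a genuine equivalence (this is exactly where the nonemptiness discussion for $S$ is needed), and that dropping the extra term $\{ 0 \}$ from the lower inclusion is legitimate, which rests on $S$ being nonempty and star-shaped. The degenerate case ${C = \Oset}$ is vacuous and can be dispatched in a single sentence.
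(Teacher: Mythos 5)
Your proposal is correct and follows essentially the same route as the paper's proof: both extend $f$ by zero to the pointed cone ${C \cup \{ 0 \}}$, apply Proposition~\ref{prop:f=p} to the extension, settle the origin via Points~1 and~2 in Remark~\ref{rem:0-gauge}, and use that a nonempty star-shaped set contains the origin to drop the extra $\{ 0 \}$ from the lower inclusion. The only detail worth writing out explicitly is the paper's one-line observation that when ${C \neq \Oset}$ positive homogeneity forces $\inv{f}([0 , 1) \. )$ to be nonempty, which is what guarantees ${S \neq \Oset}$ (hence ${0 \in S}$) in the converse direction, where you currently only assert that the step ``rests on $S$ being nonempty''.
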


\smallskip %\bigskip

\begin{proof} 
We may assume that $C$ is not empty since the equivalence to be proved is obvious otherwise. 

\smallskip

Let us consider the pointed cone ${D \as C \cup \{ 0 \}}$ 
and the function ${g : D \to \RR}$ defined by 
${g(0) \. \as 0}$ and ${g(x) \. \as \:\!\! f \. (x)}$ for ${x \in C}$ 
(in case when $C$ is pointed, this makes sense since we then have ${\. f \. (0) = 0}$ 
by Remark~\ref{rem:ph-z}). 

\smallskip

Since $g$ is non-negative and positively homogeneous, 
Proposition~\ref{prop:f=p} yields the equivalence 

\smallskip

\centerline{
$\rest{(p_{S}){\.}}{D} \ = \ j \comp g 
\qquad \iff \qquad 
\inv{g}([0 , 1) \. ) \ \inc \ S \ \inc \ \inv{g}([0 , 1]) \ = \ S_{1 \.}(g)$~. \quad ($\star$)
} 

\medskip

$(\imp)$. 
Assume now that we have $\rest{(p_{S}){\.}}{C} = j \comp \:\!\! f \!$. 

\smallskip

This first implies that $S$ is not empty by Point~1 in Remark~\ref{rem:0-gauge}. 

\smallskip

Therefore, Point~2 in Remark~\ref{rem:0-gauge} 
yields $\rest{(p_{S}){\.}}{D}(0) = p_{S}(0) = 0 = g(0) = (j \comp g) \. (0)$. 

\smallskip

Since one has ${C = D \;\! \setmin \{ 0 \}}$, this proves ${\rest{(p_{S}){\.}}{D} = j \comp g}$, 
and hence we can deduce the inclusions 
${\inv{g}([0 , 1) \. ) \inc S \inc \inv{g}([0 , 1])}$ from the equivalence ($\star$). 

\smallskip

Finally, we get 

\smallskip

\centerline{
$\inv{f}([0 , 1) \. ) \ = \ \inv{g}([0 , 1) \. ) \cap \{ 0 \} \ \inc \ S \cap \{ 0 \} \ \inc \ S$ 
\ \ and \ \ % \ \ instead of \qquad since the line is too long
$S \ \inc \ \inv{g}([0 , 1]) \ = \ \:\!\! \inv{f}([0 , 1]) \cup \{ 0 \}$~.
} 

\medskip

$(\con)$. 
Conversely, assume that we have $\. \inv{f}([0 , 1) \. ) \inc S \inc \inv{f}([0 , 1]) \cup \{ 0 \}$. 

\smallskip

Since $C$ is not empty, the same is true for $\. \inv{f}([0 , 1) \. )$, 
which yields $S \neq \Oset \.$. 

\smallskip

This implies that $S$ contains the origin since it is star-shaped, and hence one obtains 

\smallskip

\centerline{
$\inv{g}([0 , 1) \. ) 
\ = \ \:\!\! 
\inv{f}([0 , 1) \. ) \cup \{ 0 \} \ \inc \ S \cup \{ 0 \} 
\ = \ 
S \ \inc \ \inv{f}([0 , 1]) \cup \{ 0 \} \ = \ \inv{g}([0 , 1])$~.
} 

\smallskip

Finally, this yields $\rest{(p_{S}){\.}}{D} = j \comp g$ owing to the equivalence ($\star$), 
which gives 

\smallskip

\centerline{
$\rest{(p_{S}){\.}}{C} 
\ = \ 
\rest{(\rest{(p_{S}){\.}}{D}){\.}}{C} 
\ = \ 
\rest{(j \comp g){\.}}{C} 
\ = \ 
j \comp (\rest{g}{C}) \ = \ j \comp \:\!\! f \!$~.
} 
\end{proof}

%\bigskip

\begin{remark*} 
For any non-negative function ${g : C \to \RR}$ which is positively homogeneous of degree ${\a > 0}$, 
Corollary~\ref{cor:f=p} obviously applies to $\. f \:\!\! \as g^{1 \. / \. \a} \!$. 
\end{remark*}

%%%%%%%%%%%%%%%%%%%%

\subsection{Continuity of gauge functions} \label{subsec:Continuity of gauge functions}

The aim of this subsection is to give a characterization of the continuity of the gauge function 
of an \emph{arbitrary} star-shaped subset of a \emph{general} topological real vector space. 

\medskip

As we will need in the sequel some elementary but useful properties 
about topological real vector spaces, let us begin with the following remark. 

\smallskip %\bigskip

\begin{remark} \label{rem:0-tvs} \ 

\begin{enumerate}[1)]
   \item Given a neighborhood $U \!$ of the origin in a topological real vector space $V \!\!$, 
   the following easy-to-prove properties hold: 
   
   \smallskip
   
   \begin{enumerate}[a)]
      \item For any vector ${x \in V \!\!}$, there exists a real number ${\e > 0}$ 
      such that we have ${[{-\e} , \e] x \inc U \!}$. 
      
      \smallskip
      
      \item The subset $U \!$ of $V \!$ is absorbing by Point~1.a, 
      and therefore satisfies ${\Vect{U} = V \!}$ by Point~4 in Proposition~\ref{prop:abs-cone}. 
   \end{enumerate} 
   
   %\medskip
   \pagebreak
   
   \item A cone $C$ in a topological real vector space $V \!$ 
   whose interior $\intr{C}$ contains the origin is equal to $V \!\.$. 
   Indeed, $C$ is then a neighborhood of $0$ in $V \!\!$, 
   and hence is absorbing by Point~1.b, 
   which yields ${C = V \!}$ by Point~4 in Proposition~\ref{prop:abs-cone}. 
   
   \medskip
   
   \item Given a finite-dimensional real vector space $W \!\!$, 
   there exists a \emph{unique} topological real vector space structure on $W \!$ which is Hausdorff. 
   Endowed with this structure, $W \!$ is then isomorphic 
   to the canonical topological real vector space $\Rn{n}$, where $n$ denotes the dimension of $W \!$ 
   (see~\cite[Chapitre~I, Th\'{e}or\`{e}me~2, page~14]{BouEVT81}). 
   
   \medskip
   
   \item That said, it is to be mentioned that any finite-dimensional topological real vector space 
   is isomorphic to the Cartesian product ${\Rn{k} \cart \Rn{n - k}}$ 
   for some integers ${0 \leq k \leq n}$, 
   where the first factor is equipped with the usual topology 
   and the second one with the trivial topology 
   (see for example~\cite[Chapter~2, Section~7, Problem~A, page~64]{KelNam76}). 
\end{enumerate} 
\end{remark}

\bigskip

\begin{lemma}~\label{lem:gauge-top} 
   The gauge function $p_{S}$ of a star-shaped subset $S \.$ 
   of a topological real vector space $V \!\.$ satisfies the following properties: 
   
   \smallskip % some space is needed here
   
   \begin{enumerate}
      \item $\inv{p_{S}}([0 , 1]) \inc \clos{\inv{p_{S}}([0 , 1) \. )}$. 
      
      \smallskip
      
      \item $\intr{\wideparen{\inv{p_{S}}([0 , 1])}} \inc \inv{p_{S}}([0 , 1) \. )$. 
   \end{enumerate} 
\end{lemma}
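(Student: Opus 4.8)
The plan is to run both inclusions off two ingredients: the positive homogeneity of the gauge (Point~3 of Proposition~\ref{prop:gauge-vect}), giving $p_{S}(tx) = t\,p_{S}(x)$ for every $t > 0$, and the continuity of scalar multiplication in a topological real vector space, which I will use in the concrete form of Point~1.a of Remark~\ref{rem:0-tvs}: for any neighborhood $U$ of the origin and any vector $v$, one has $[-\e , \e]v \inc U$ for some $\e > 0$. Both statements then follow by sliding $x$ radially (taking $t$ slightly below or above $1$) while reading off the gauge value from homogeneity. The edge cases $S = \Oset$ and $x = 0$ will be handled separately, since the relation $p_{S}(tx) = t\,p_{S}(x)$ is only asserted for $t > 0$.

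For Point~1, I would take $x \in \inv{p_{S}}([0 , 1])$, so $p_{S}(x) \leq 1$ is finite, and show $x \in \clos{\inv{p_{S}}([0 , 1) \. )}$ by checking that every neighborhood $W$ of $x$ meets $\inv{p_{S}}([0 , 1) \. )$. Applying Remark~\ref{rem:0-tvs} to the neighborhood $W - x$ of the origin and to the vector $x$ yields $\e > 0$ with $tx \in W$ for all $t \in [1 - \e , 1 + \e]$. Choosing any $t \in (\max(0 , 1 - \e) , 1)$ gives $p_{S}(tx) = t\,p_{S}(x) \leq t < 1$, so $tx \in W \cap \inv{p_{S}}([0 , 1) \. )$, as wanted. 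The case $x = 0$ is immediate: $\inv{p_{S}}([0 , 1]) \neq \Oset$ forces $S \neq \Oset$ by Remark~\ref{rem:0-gauge}, hence $p_{S}(0) = 0 < 1$ and $0 \in \inv{p_{S}}([0 , 1) \. )$.

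For Point~2, I would first record that $B \as \inv{p_{S}}([0 , 1])$ is its own star-shaped hull. If $B = \Oset$ there is nothing to prove; if $B \neq \Oset$ then $S \neq \Oset$ by Remark~\ref{rem:0-gauge}, so $0 \in B$, and homogeneity gives $tx \in B$ for every $x \in B$ and $t \in [0 , 1]$, i.e.\ $B$ is star-shaped and $\wideparen{B} = B$. It therefore suffices to prove $\intr{B} \inc \inv{p_{S}}([0 , 1) \. )$. Taking $x \in \intr{B}$, the set $B$ is a neighborhood of $x$; if $x = 0$ then $p_{S}(0) = 0 < 1$, and if $x \neq 0$ I would argue by contradiction, assuming $p_{S}(x) = 1$. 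Applying Remark~\ref{rem:0-tvs} to the neighborhood $B - x$ and the vector $x$ produces some $t > 1$ with $tx \in B$, whence $p_{S}(tx) \leq 1$; but homogeneity forces $p_{S}(tx) = t\,p_{S}(x) = t > 1$, a contradiction. Hence $p_{S}(x) < 1$ and $x \in \inv{p_{S}}([0 , 1) \. )$.

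The computations are short, so the only real obstacle is the clean topological passage from ``scalar multiplication is continuous'' to the usable assertion that radial perturbations $tx$ of a point stay inside (Point~2) or converge to (Point~1) the relevant sublevel set. Routing this uniformly through Remark~\ref{rem:0-tvs}, rather than re-proving continuity of $t \mapsto tx$ by hand each time, is what keeps both parts transparent; the complementary care is in disposing of the degenerate situations $S = \Oset$ and $x = 0$ before invoking homogeneity.
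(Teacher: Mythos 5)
Your proof is correct and follows essentially the same route as the paper's: both arguments slide $x$ radially via the continuity of scalar multiplication (your appeal to Point~1.a of Remark~\ref{rem:0-tvs} applied to $W - x$ is just the translated form of the paper's direct use of the continuity of $t \longmapsto t x$ at $t = 1$) and then read off the gauge value from positive homogeneity. Your extra reductions---checking that $\inv{p_{S}}([0,1])$ is star-shaped and treating $x = 0$ and $S = \Oset$ separately---are harmless but unnecessary, since the $\wideparen{\phantom{x}}$ in the statement is purely typographical (the interior is taken of $\inv{p_{S}}([0,1])$ itself) and positive homogeneity as stated covers all the cases you set aside.
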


\bigskip

\begin{proof} \ 

\textsf{Point~1.} 
Let $x \in V \!$ such that $p_{S}(x) \leq 1$ holds, 
and let $U \!$ be a neighborhood of $x$ in $V \!\.$. 

\smallskip

Since the map ${t \longmapsto t x}$ from $\RR$ to $V \!$ is continuous at ${t \as \;\!\! 1}$, 
there exists ${\a \in (0 , 1)}$ 
satisfying ${[\a , 1] x \inc U \!}$, which in particular yields ${\a x \in U \!}$. 

\smallskip

But we have $p_{S}(\a x) = \a p_{S}(x) < 1$, which implies $\a x \in \inv{p_{S}}([0 , 1) \. )$. 

\smallskip

This proves the inclusion $\inv{p_{S}}([0 , 1]) \inc \clos{\inv{p_{S}}([0 , 1) \. )}$. 

\medskip

\textsf{Point~2.} 
Given ${x \in \intr{\wideparen{\inv{p_{S}}([0 , 1])}}}$, the continuity at ${t \as \;\!\! 1}$ 
of the map ${t \longmapsto t x}$ from $\RR$ to $V \!$ insures the existence of a real number ${\a > 1}$ 
such that we have ${[1 , \a] x \inc \inv{p_{S}}([0 , 1])}$. 

\smallskip

In particular, we get ${\a x \in \inv{p_{S}}([0 , 1])}$, which implies ${p_{S}(x) \leq 1 \. / \. \a}$ 
by the positive homogeneity of $p_{S}$ (see Point~3 in Proposition~\ref{prop:gauge-vect}), 
and hence ${p_{S}(x) < 1}$. 

\smallskip

This proves the inclusion $\intr{\wideparen{\inv{p_{S}}([0 , 1])}} \inc \inv{p_{S}}([0 , 1) \. )$. 
\end{proof}

\bigskip

We shall now use Lemma~\ref{lem:gauge-top} to establish a result which gives 
a complete and simple characterization of the continuity of a gauge function (see Point~2.c below). 

\bigskip

\begin{proposition} \label{prop:gauge-top} 
   The gauge function $p_{S}$ of a star-shaped subset $S \.$ 
   of a topological real vector space $V \!\.$ satisfies the following properties: 
   
   \smallskip % some space is needed here
   
   \begin{enumerate}
      \item We have the equalities 
      
      \begin{enumerate}[(a)]
         \item $\intr{\wideparen{\inv{p_{S}}([0 , 1) \. )}} = \intr{S} 
         = \intr{\wideparen{\inv{p_{S}}([0 , 1])}}$, and 
         
         \medskip % \smallskip
         
         \item $\clos{\inv{p_{S}}([0 , 1) \. )} = \clos{S} = \clos{\inv{p_{S}}([0 , 1])}$. 
      \end{enumerate} 
      
      \bigskip % \medskip
      %\pagebreak
      
      \item We have the equivalences 
      
      \smallskip
      
      \begin{enumerate}[(a)]
         \item $p_{S}$ is lower semi-continuous 
         $\iff$ $\inv{p_{S}}([0 , 1])$ is closed in $V \!\.$ 
         $\iff$ $\inv{p_{S}}([0 , 1]) = \clos{S} \.$, 
         
         \smallskip
         
         \item $p_{S}$ is upper semi-continuous 
         $\iff$ $\inv{p_{S}}([0 , 1) \. )$ is open in $V \!\.$ 
         $\iff$ $\inv{p_{S}}([0 , 1) \. ) = \intr{S} \.$, and 
         
         \smallskip
         
         \item $p_{S}$ is continuous $\iff$ $\inv{p_{S}}(1) = \bd{S} \.$. 
      \end{enumerate} 
   \end{enumerate} 
\end{proposition}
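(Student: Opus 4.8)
The plan is to establish Point~1 first, since the semi-continuity equivalences in Point~2 will follow quickly from the level-set characterizations together with Lemma~\ref{lem:gauge-top}. For Point~1, I would exploit the sandwich inclusions $\inv{p_{S}}([0 , 1) \. ) \inc S \inc \inv{p_{S}}([0 , 1])$ valid for star-shaped $S$ (Point~5 in Proposition~\ref{prop:gauge-vect}). Taking interiors and closures across these inclusions immediately gives $\intr{\wideparen{\inv{p_{S}}([0 , 1) \. )}} \inc \intr{S} \inc \intr{\wideparen{\inv{p_{S}}([0 , 1])}}$ and the analogous chain for closures. To close each chain into equalities, I would invoke Lemma~\ref{lem:gauge-top}: Point~1 there gives $\inv{p_{S}}([0 , 1]) \inc \clos{\inv{p_{S}}([0 , 1) \. )}$, so taking closures yields $\clos{\inv{p_{S}}([0 , 1])} \inc \clos{\inv{p_{S}}([0 , 1) \. )}$, reversing the remaining inclusion and collapsing the closure chain. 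Dually, Point~2 of the Lemma gives $\intr{\wideparen{\inv{p_{S}}([0 , 1])}} \inc \inv{p_{S}}([0 , 1) \. )$; passing to interiors reverses the remaining inclusion and collapses the interior chain. This settles both (a) and (b).

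For Point~2(a), I would use the standard fact that a function is lower semi-continuous exactly when all its sublevel sets are closed. Since $p_{S}$ is positively homogeneous (Point~3 in Proposition~\ref{prop:gauge-vect}), all its sublevel sets $S_{r}(p_{S})$ for $r > 0$ are positive scalar multiples of $\inv{p_{S}}([0 , 1])$, and scalar multiplication by a nonzero factor is a homeomorphism in a topological vector space; hence closedness of all sublevel sets reduces to closedness of the single set $\inv{p_{S}}([0 , 1])$. The sublevel set at $0$ and negative levels need a separate quick check (they are empty or equal to $\inv{p_{S}}(0)$, itself a cone), but these cause no trouble. The final equivalence $\inv{p_{S}}([0 , 1]) = \clos{S}$ is then immediate: $\inv{p_{S}}([0 , 1])$ is always contained in its own closure, equals $\clos{S}$ by Point~1(b) when closed, and conversely if it equals $\clos{S}$ it is closed. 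Point~2(b) is the dual argument: upper semi-continuity is equivalent to all strict sublevel sets being open, which reduces by homogeneity to openness of $\inv{p_{S}}([0 , 1) \. )$, and the identification with $\intr{S}$ comes from Point~1(a).

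For Point~2(c), continuity is the conjunction of lower and upper semi-continuity, so by (a) and (b) it is equivalent to $\inv{p_{S}}([0 , 1])$ being closed \emph{and} $\inv{p_{S}}([0 , 1) \. )$ being open. I would then translate this conjunction into the single condition $\inv{p_{S}}(1) = \bd{S}$. In one direction, if $p_{S}$ is continuous then $\inv{p_{S}}([0 , 1)) = \intr{S}$ and $\inv{p_{S}}([0 , 1]) = \clos{S}$, so their set difference $\inv{p_{S}}(1)$ equals $\clos{S} \setmin \intr{S} = \bd{S}$. For the converse, assuming $\inv{p_{S}}(1) = \bd{S}$, I would use the general inclusions $\intr{S} \inc \inv{p_{S}}([0 , 1) \. )$ and $\inv{p_{S}}([0 , 1]) \inc \clos{S}$ (which follow from Point~1 by taking interior/closure inside the sandwich) together with the disjoint decompositions $\inv{p_{S}}([0 , 1]) = \inv{p_{S}}([0 , 1)) \sqcup \inv{p_{S}}(1)$ and $\clos{S} = \intr{S} \sqcup \bd{S}$ to force both inclusions to be equalities, which by (a) and (b) gives continuity.

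I expect the main obstacle to be the converse direction of Point~2(c): reconstructing both the openness of the strict sublevel set and the closedness of the level set from the single boundary condition requires carefully matching up the two disjoint decompositions and ruling out the possibility that a boundary point fails to land exactly in $\inv{p_{S}}(1)$. The homogeneity-reduction in Point~2(a)--(b) is routine once one notes that scalar dilations are homeomorphisms, but I would be careful with the degenerate sublevel sets at level $0$ and the case $S = \Oset$, where $p_{S} \equiv {+\infty}$, to ensure the semi-continuity statements are vacuously or trivially correct.
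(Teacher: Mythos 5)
Your proposal is correct and follows essentially the same route as the paper's proof: the sandwich $\inv{p_{S}}([0 , 1) \. ) \inc S \inc \inv{p_{S}}([0 , 1])$ combined with Lemma~\ref{lem:gauge-top} for Point~1, positive homogeneity plus the level-set criteria for semi-continuity in Points~2.a--2.b (the paper arranges this as a direct verification that all sublevel sets $\a \clos{S}$ are closed, resp.\ $\a \intr{S}$ open, but the substance is your dilation-homeomorphism reduction), and the same matching of the two disjoint decompositions for the converse of Point~2.c. One small nit: at level zero, being a cone does not yield closedness of $S_{0}(p_{S}) = \inv{p_{S}}(0)$; the quick check you defer is $S_{0}(p_{S}) = \bigcap_{a > 0} S_{a}(p_{S}) = \bigcap_{a > 0} a \, \inv{p_{S}}([0 , 1])$, an intersection of closed sets, which is exactly how the paper treats the case $\a = 0$.
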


\bigskip

\begin{proof} \ 

\textsf{Point~1.a.} 
As we have ${\inv{p_{S}}([0 , 1) \. ) \inc S \inc \inv{p_{S}}([0 , 1])}$ 
by Point~5 in Proposition~\ref{prop:gauge-vect}, one first obtains 
${\intr{\wideparen{\inv{p_{S}}([0 , 1) \. )}} \inc \intr{S} 
\inc \intr{\wideparen{\inv{p_{S}}([0 , 1])}}}$. 

\smallskip

Then, combining these two inclusions with Point~2 in Lemma~\ref{lem:gauge-top} 
and taking the interior, we get 
${\intr{\wideparen{\inv{p_{S}}([0 , 1) \. )}} = \intr{S} = \intr{\wideparen{\inv{p_{S}}([0 , 1])}}}$. 

\medskip

\textsf{Point~1.b.} 
As we have ${\inv{p_{S}}([0 , 1) \. ) \inc S \inc \inv{p_{S}}([0 , 1])}$ 
by Point~5 in Proposition~\ref{prop:gauge-vect}, 
one first obtains ${\clos{\inv{p_{S}}([0 , 1) \. )} \inc \clos{S} \inc \clos{\inv{p_{S}}([0 , 1])}}$. 

\smallskip

Then, combining these two inclusions with Point~1 in Lemma~\ref{lem:gauge-top} 
and taking the closure, we get 
${\clos{\inv{p_{S}}([0 , 1) \. )} = \clos{S} = \clos{\inv{p_{S}}([0 , 1])}}$. 

\medskip

\textsf{Point~2.a.} 

\textasteriskcentered \ 
The first implication $\imp$ is straightforward since $p_{S}$ is non-negative. 

\smallskip

\textasteriskcentered \ 
The second implication $\imp$ is a consequence of the second equality in Point~1.b. 

\smallskip

\textasteriskcentered \ 
Now, assume that the equality $\inv{p_{S}}([0 , 1]) = \clos{S}$ holds, and pick any $\a \in \RR$. 
Then we get 
\[
\inv{p_{S}}([{-\infty} , \a]) \, = \, % instead of \ = \ since the line is too long
\begin{cases} 
   \Oset & \! \mbox{if one has} \ \a < 0 \ \mbox{(since $p_{S}$ is non-negative)} \\ 
   & \\ 
   \inv{p_{S}}([0 , \a]) = \a \inv{p_{S}}([0 , 1]) 
   = \a \clos{S} & \! \mbox{if one has} \ \a > 0 \ \mbox{(since $p_{S}$ is non-negative} \\ 
   & \! \mbox{and owing to Point~3 in Proposition~\ref{prop:gauge-vect})} \\ 
   & \\ 
   \disp \bigcap_{t > 0} \inv{p_{S}}([{-\infty} , t]) 
   = \bigcap_{t > 0} t \clos{S} & \! \mbox{if one has} \ \a = 0 \ 
   \mbox{(by the previous line)} 
\end{cases},
\] 
which shows that $\inv{p_{S}}([{-\infty} , \a])$ is closed in $V \!\.$. 

\smallskip

This proves that $p_{S}$ is lower semi-continuous. 

\medskip

\textsf{Point~2.b.} 

\textasteriskcentered \ 
The first implication $\imp$ is straightforward since $p_{S}$ is non-negative. 

\smallskip

\textasteriskcentered \ 
The second implication $\imp$ is a consequence of the first equality in Point~1.a. 

\smallskip

\textasteriskcentered \ 
Now, assume that the equality $\inv{p_{S}}([0 , 1) \. ) = \intr{S}$ holds, and pick any $\a \in \RR$. 
Then we get 
\[
\inv{p_{S}}([{-\infty} , \a) \. ) \, = \, % instead of \ = \ since the line is too long
\begin{cases} 
   \Oset & \! \mbox{if one has} \ \a \leq 0 \ \mbox{(since $p_{S}$ is non-negative)} \\ 
   & \\ 
   \inv{p_{S}}([0 , \a) \. ) = \a \inv{p_{S}}([0 , 1) \. ) 
   = \a \intr{S} & \! \mbox{if one has} \ \a > 0 \ \mbox{(since $p_{S}$ is non-negative} \\ 
   & \! \mbox{and owing to Point~3 in Proposition~\ref{prop:gauge-vect})} 
\end{cases},
\] 
which shows that $\inv{p_{S}}([{-\infty} , \a) \. )$ is open in $V \!\.$. 

\smallskip

This proves that $p_{S}$ is upper semi-continuous. 

\medskip

\textsf{Point~2.c.} 

\textasteriskcentered \ $(\imp)$. 
Assume that $p_{S}$ is continuous. 

\smallskip

Then we have ${\inv{p_{S}}([0 , 1]) = \clos{S}}$ by Point~2.a 
and ${\inv{p_{S}}([0 , 1) \. ) = \intr{S}}$ by Point~2.b, which yields the equalities 
${\inv{p_{S}}(1) = \inv{p_{S}}([0 , 1]) \setmin \inv{p_{S}}([0 , 1) \. ) 
= \clos{S} \setmin \intr{S} = \bd{S} \.}$. 

\smallskip

\textasteriskcentered \ $(\con)$. 
Conversely, assume that we have $\inv{p_{S}}(1) = \bd{S} \.$. 

\smallskip

Then we get 

\vspace{-6pt} %\smallskip

\centerline{
$\inv{p_{S}}([0 , 1) \. ) 
\ = \ 
\inv{p_{S}}([0 , 1]) \setmin \inv{p_{S}}(1) 
\ \inc \ 
\clos{\inv{p_{S}}([0 , 1])} \setmin \inv{p_{S}}(1) 
\ = \ 
\clos{S} \setmin \bd{S} \ = \ \intr{S}$
} 

\smallskip

since we proved $\clos{\inv{p_{S}}([0 , 1])} = \clos{S}$ in Point~1.b. 

%\smallskip
\pagebreak

On the other hand, we have 
$\intr{S} = \intr{\wideparen{\inv{p_{S}}([0 , 1) \. )}} \inc \inv{p_{S}}([0 , 1) \. )$ by Point~1.a. 

\smallskip

So we get ${\inv{p_{S}}([0 , 1) \. ) = \intr{S} \.}$, 
which proves that $p_{S}$ is upper semi-continuous according to Point~2.b. 

\smallskip

Finally, using again the last equality, we obtain 
${\inv{p_{S}}([0 , 1]) = \inv{p_{S}}([0 , 1) \. ) \cup \inv{p_{S}}(1) 
= \intr{S} \cup \bd{S} = \clos{S} \.}$, 
which proves that $p_{S}$ is lower semi-continuous according to Point~2.a. 

\smallskip

Conclusion: the function $p_{S}$ is continuous. 
\end{proof}

\bigskip

\begin{remark} \label{rem:gauge-top} \ 

\begin{enumerate}[1)]
   \item Each of the inclusions 
   ${\intr{S} \inc \inv{p_{S}}([0 , 1) \. ) \inc S \inc \inv{p_{S}}([0 , 1]) \inc \clos{S}}$ 
   contained in the proof of Point~1 in Proposition~\ref{prop:gauge-top} 
   are strict in general as we can check with the star-shaped subset $S$ of $\Rn{2}$ defined by 
   ${S \as D \cup \{ (r \cos{\. \t} \, , \, r \sin{\. \t}) 
   \st r \in [0 , 2] \ \mbox{and} \ \t \in \QQ \}}$, where $D$ denotes the open unit disk in $\Rn{2}$. 
   
   \medskip
   
   \item As a consequence of Point~2.a in Proposition~\ref{prop:gauge-top}, 
   the lower semi-continuity of $p_{S}$ yields the relation 
   ${\disp \inv{p_{S}}(0) = \bigcap_{t > 0} t \clos{S} \.}$. 
   
   \smallskip %\medskip
   
   \item Since we have ${S \inc \inv{p_{S}}([0 , 1])}$ by Point~5 in Proposition~\ref{prop:gauge-vect} 
   and ${\inv{p_{S}}([0 , 1]) \inc \clos{S}}$ by the second equality in Point~1.b 
   in Proposition~\ref{prop:gauge-top}, the relation ${\inv{p_{S}}([0 , 1]) = \clos{S}}$ occurs 
   whenever $S$ is closed in $V \!\!$, and this then implies the lower semi-continuity of $p_{S}$ 
   owing to Point~2.a in Proposition~\ref{prop:gauge-top}. 
   
   \medskip
   
   \item On the other hand, as regards the upper semi-continuity, 
   let us notice that if $S$ is not empty 
   and if $p_{S}$ is upper semi-continuous, then $S$ must be a neighborhood of the origin in $V \!$ 
   by Point~2.b in Proposition~\ref{prop:gauge-top}. 
   
   \noindent Nevertheless, the converse is not true as we can check 
   by considering the star-shaped subset $S$ 
   of $\Rn{2}$ defined by ${S \as D \cup ([0 , {+\infty}) \cart \{ 0 \})}$, 
   where $D$ denotes the open unit disk in $\Rn{2}$. Indeed, in that case, the point ${x \as (2 , 0)}$ 
   satisfies ${p_{S}(x) = 0 \in [0 , 1)}$ by Point~3 in Remark~\ref{rem:0-gauge} 
   but does not belong to ${\intr{S} = D \.}$, 
   which shows that the condition ${\inv{p_{S}}([0 , 1) \. ) = \intr{S}}$ 
   in Point~2.b in Proposition~\ref{prop:gauge-top} does not hold. 
   
   \noindent We shall see in Proposition~\ref{prop:cvx-gauge-cont} which extra condition is needed 
   to fill this gap. 
\end{enumerate} 
\end{remark}

\bigskip

%%%%%%%%%%%%%%%%%%%%

\section{About convexity} \label{sec:About-convexity}

In this section, we give some definitions and properties about convex sets and convex functions 
before moving towards the notion of strict convexity for subsets of an arbitrary topological 
real vector space. 

%%%%%%%%%%%%%%%%%%%%

\subsection{Geometric aspects of convexity}

We shall first focus on some useful properties of convexity for both sets and functions. 

\bigskip

\begin{definition} 
   Given points $x$ and $y$ in a real vector space $V \!\!$, the set 
   
   \smallskip
   
   \centerline{
   $[x , y] \as \{ \. (1 - t) x + t y \st t \in [0 , 1] \}$
   } 
   
   \smallskip
   
   is called the \emph{(closed) line segment} between $x$ and $y$, whereas the set 
   
   \smallskip
   
   \centerline{
   $]x , y[ \as [x , y] \,\. \setmin \{ x , y \}$
   } 
   
   \smallskip
   
   is called the \emph{open line segment} between $x$ and $y$ 
   (the latter set is therefore empty in case when one has $x = y$). 
   
   \medskip
   
   \noindent A subset $C$ of $V \!$ is said to be \emph{convex} 
   if we have $[x , y] \inc C$ for all $x , y \in C \.$. 
\end{definition}

%\bigskip
%\pagebreak

\begin{remark} \label{rem:cvx-op} 
For any collection of convex subsets of $V \!$ 
which is an upward directed set for the inclusion, its union is itself a convex subset of $V \!\.$. 
\end{remark}

\bigskip

\begin{definition} 
   The \emph{convex hull} $\Conv{S}$ of a subset $S$ of a real vector space $V \!$ 
   is the smallest convex subset of $V \!$ which contains $S \.$. 
   In other words, the convex hull of $S$ is equal to the set of points ${x \in V \!}$ 
   which write ${\disp x = \! \sum_{i = 1}^{n} \l_{i} x_{i}}$ 
   for some integer ${n \geq 1}$, some points ${\llist{x}{1}{n} \in S}$ and \linebreak
   
   \vspace{-6pt}
   
   some real numbers ${\llist{\l}{1}{n} \in [0 , {+\infty})}$ 
   which satisfy ${\disp \,\! \sum_{i = 1}^{n} \l_{i} = 1}$. 
\end{definition}

\bigskip

We obviously have ${\Conv{S} \inc \Aff{S}}$. 

\bigskip

\begin{remark} \label{rem:cvx+star-shaped+cone} \ 

\begin{enumerate}[1)]
   \item Any convex subset $C$ of $V \!$ is star-shaped if and only if it contains the origin 
   since the convexity of $C$ is equivalent to saying that $C$ is star-shaped about any of its points. 
   
   \noindent For example, any convex subset $C$ of $V \!$ for which we can find $x \in V \!$ 
   such that $C$ absorbs $x$ and ${-x}$ contains the origin. 
   
   \noindent Indeed, there exist real numbers ${\l > 0}$ and ${\m > 0}$ satisfying 
   ${\l x \in C}$ and ${{-\m} x \in C \.}$, and hence we get 
   
   %\smallskip
   \vspace{-10pt}
   
   \centerline{
   $\disp 0 \ = \ \frac{\m}{\l + \m}(\l x) + \frac{\l}{\l + \m}({-\m} x) \in C$
   } 
   
   \smallskip
   
   \noindent since $C$ is convex and since we have 
   ${\disp \frac{\m}{\l + \m} \geq 0}$, ${\disp \frac{\l}{\l + \m} \geq 0}$ 
   and ${\disp \frac{\m}{\l + \m} + \frac{\l}{\l + \m} = 1}$. 
   
   \medskip
   
   \item The convex hull of a cone $C$ in $V \!$ is still a cone in $V \!\.$. 
   
   \noindent Indeed, for any real number ${\l > 0}$, 
   the multiplication by $\l$ is an affine mapping from $V \!$ to $V \!\!$, 
   and hence satisfies $\l \Conv{C} \inc \Conv{\l C \.}$, 
   which yields $\l \Conv{C} \inc \Conv{C}$ since we have $\l C \inc C \.$. 
\end{enumerate} 
\end{remark}

\bigskip

\begin{proposition} \label{prop:sshc} 
   For any convex subset $C \.$ of a real vector space, 
   its star-shaped hull $\widehat{C} \.$ is also convex. 
\end{proposition}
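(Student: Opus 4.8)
The plan is to work from the explicit description ${\widehat{C} = [0,1] C}$ recorded just after the definition of the star-shaped hull, which says that $\widehat{C}$ consists precisely of the points $t x$ with ${t \in [0,1]}$ and ${x \in C}$. If $C$ is empty the statement is vacuous, so I would assume ${C \neq \Oset}$; in that case ${0 \in \widehat{C}}$ by taking ${t = 0}$ (and any ${x \in C}$), a fact that will be needed to handle a degenerate case below.

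First I would pick two arbitrary points ${a , b \in \widehat{C}}$ and write them as ${a = s x}$ and ${b = t y}$ with ${s , t \in [0,1]}$ and ${x , y \in C}$. Fixing ${u \in [0,1]}$, the goal is to show that ${(1 - u) a + u b = (1 - u) s \, x + u t \, y}$ again lies in $\widehat{C}$. Setting ${p \as (1 - u) s \geq 0}$ and ${q \as u t \geq 0}$, this point is exactly ${p x + q y}$, and since ${s , t \leq 1}$ one gets the key inequality ${p + q \leq (1 - u) + u = 1}$.

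The next step is to rewrite ${p x + q y}$ in the form ${\l z}$ with ${\l \in [0,1]}$ and ${z \in C}$, which by the description of $\widehat{C}$ will finish the argument. If ${p + q = 0}$, the point equals the origin, which belongs to $\widehat{C}$ as noted. Otherwise I set ${\l \as p + q \in (0,1]}$ and ${z \as \frac{p}{p + q} x + \frac{q}{p + q} y}$; since the two coefficients are non-negative, sum to $1$, and $C$ is convex, one has ${z \in C}$, whence ${\l z = p x + q y \in [0,1] C = \widehat{C}}$.

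There is essentially no obstacle here beyond bookkeeping: the points deserving a word of care are the empty case, the degenerate combination ${p + q = 0}$, and the elementary inequality ${p + q \leq 1}$ which guarantees that the scaling factor $\l$ stays in $[0,1]$; the convexity of $C$ enters exactly once, to place the normalized point $z$ back inside $C$.
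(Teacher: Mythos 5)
Your proof is correct. It is worth noting that it takes a different route from the paper, which gives no direct argument at all: the paper simply invokes a general theorem of Webster (\cite[Theorem~7.2.3, page~336]{Web94}), to the effect that the join of two convex sets $A$ and $B$ --- the union of all segments $[a , b]$ with ${a \in A}$ and ${b \in B}$ --- is again convex, and specializes it with ${A \as C}$ and ${B \as \{ 0 \}}$, since that join is exactly $\widehat{C}$ when $C$ is non-empty. Your computation is, in effect, the elementary unwinding of that general fact in this special case: writing the convex combination as ${p x + q y}$ with ${p , q \geq 0}$ and ${p + q \leq 1}$, then renormalizing via ${z \as \frac{p}{p + q} x + \frac{q}{p + q} y \in C}$, is precisely the standard proof of the join theorem with one factor collapsed to the origin. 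What your version buys is self-containedness (no external citation) together with explicit attention to the two degenerate situations the citation silently absorbs --- the empty set ${C = \Oset}$, where ${\widehat{C} = \Oset}$ is vacuously convex, and the case ${p + q = 0}$, where the point is the origin, which indeed lies in ${[0 , 1] C}$ exactly because $C$ was assumed non-empty. What the paper's route buys is brevity and the placement of the statement as an instance of a more general result about joins, which would also cover star-shaped hulls taken about an arbitrary point rather than the origin.
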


\bigskip

\begin{proof} 
We refer to~\cite[Theorem~7.2.3, page~336]{Web94} for a proof of a general result 
which implies Proposition~\ref{prop:sshc} when we take $A \as C$ and $B \as \{ 0 \}$. 
\end{proof}

\bigskip

\begin{remark} \label{rem:cvx-cone} 
The pointed conic hull $\Cone{C}$ of a convex subset $C$ of a real vector space $V \!$ is convex. 

Indeed, assuming that $C$ is not empty (since this property is obvious otherwise), 
we know that for each real number ${\l > 0}$ the subset ${\l \widehat{C}}$ of $V \!$ is convex 
by Proposition~\ref{prop:sshc} and since the multiplication by $\l$ is an affine mapping 
from $V \!$ to itself. 

Moreover, we have $\l \widehat{C} = [0 , \l] C \.$. 

Therefore, the family $(\l \widehat{C})_{\l > 0}$ is non-decreasing for the inclusion, 
and hence its union, which is equal to $\Cone{C}$, 
is a convex subset of $V \!$ by Remark~\ref{rem:cvx-op}. 
\end{remark}

\bigskip

\begin{proposition} \label{prop:cone-+} 
   Given a cone $C \.$ in a real vector space $V \!\!$, we have the following equivalence: 
   \[
   C \ \mbox{is convex} 
   \qquad \iff \qquad 
   C \ \mbox{is stable with respect to} \ +~.
   \] 
\end{proposition}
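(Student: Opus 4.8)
The plan is to prove both implications directly from the definitions, exploiting the fact that a cone is closed under multiplication by \emph{strictly positive} scalars, combined in each direction with the hypothesis in force.

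For the forward implication (if $C$ is convex, then $C$ is stable under $+$), I would fix arbitrary $x , y \in C$ and first use convexity to place the midpoint $(1 / 2) x + (1 / 2) y$ in $C$. Since $C$ is a cone, multiplying this midpoint by the positive scalar $2$ then gives $x + y \in C$. As $x$ and $y$ were arbitrary, this establishes $C + C \inc C$.

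For the reverse implication (if $C$ is stable under $+$, then $C$ is convex), I would fix $x , y \in C$ together with $t \in [0 , 1]$ and show that $(1 - t) x + t y \in C$, which amounts to $[x , y] \inc C$. The endpoints $t = 0$ and $t = 1$ are immediate, since they return $x$ and $y$, both of which lie in $C$. For $t \in (0 , 1)$ the two coefficients $1 - t$ and $t$ are strictly positive, so the cone property yields $(1 - t) x \in C$ and $t y \in C$; stability under $+$ then gives their sum $(1 - t) x + t y \in C$. Hence $[x , y] \inc C$ for all $x , y \in C$, so $C$ is convex.

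There is no genuine obstacle here; the only point requiring care is precisely that a cone is closed only under \emph{positive} scaling. This is exactly why the forward direction passes through the midpoint (so that both coefficients $1/2$ are positive) rather than scaling a single point, and why in the reverse direction the endpoints of the segment must be handled separately, since there one of the coefficients vanishes and the cone property no longer applies.
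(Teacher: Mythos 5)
Your proof is correct. The paper itself does not argue this proposition at all: it simply cites \cite[Theorem~2.6, page~14]{Roc70}, and the argument you give is precisely the standard one found there --- forward direction via the midpoint $(1/2)x + (1/2)y$ rescaled by the positive factor $2$, reverse direction by splitting $(1 - t)x + ty$ into the two cone elements $(1 - t)x$ and $ty$ and summing. So you have supplied, correctly, the elementary proof the paper outsources. Your closing remark identifies the one genuine subtlety: in this paper a cone need only be stable under \emph{strictly} positive scalars and need not contain the origin (blunt cones are allowed), so the endpoint cases $t = 0$ and $t = 1$ cannot be obtained by scaling and must be dispatched separately, exactly as you do; likewise your detour through the midpoint in the forward direction is what keeps both coefficients positive. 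Nothing to fix.
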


\bigskip

\begin{proof} 
We refer to~\cite[Theorem~2.6, page~14]{Roc70} for a proof of this result. 
\end{proof}

\bigskip

Let us now switch to functions by recalling the definition of a (strictly) convex function 
on a convex subset of a real vector space. 

\bigskip

\begin{definition} \label{def:f-cvx+strict-cvx} 
   Given a convex subset $C$ of a real vector space, a function ${\. f \. : C \to \RR}$ is said to be 
   
   \begin{enumerate}
      \item \emph{convex} if we have 
      ${\. f \. ( \. (1 - t) x + t y) \leq (1 - t) f \. (x) + t f \. (y)}$ 
      for any points ${x , y \in C}$ and any number ${t \in (0 , 1)}$, 
      
      \smallskip
      
      \item \emph{strictly convex} if we have 
      ${\. f \. ( \. (1 - t) x + t y) < (1 - t) f \. (x) + t f \. (y)}$ 
      for any distinct points ${x , y \in C}$ and any number ${t \in (0 , 1)}$. 
   \end{enumerate} 
\end{definition}

\bigskip

It is to be noticed that both convexity and strict convexity of functions are mere affine notions. 

\bigskip

\begin{remark} \label{rem:sc} \ 

\begin{enumerate}[1)]
   \item A strictly convex function is of course convex. 
   
   \medskip
   
   \item Given a scalar product $\scal{\cdot \,}{\cdot}$ on a real vector space $V \!$ 
   and a real number ${\a > 1}$, the norm $\norm{\cdot}$ associated with $\scal{\cdot \,}{\cdot}$ 
   is such that the function ${\. f \. \as \norm{\cdot}^{\. \a} \!}$ is strictly convex. 
   
   \noindent Indeed, pick $x$ and $y$ in $V \!$ which satisfy ${x \neq y}$, 
   and let us fix ${t \in (0 , 1)}$. 
   
   \noindent First of all, if we have ${\norm{x} \neq \norm{y}}$, then one can write 
   \[
   f \. ( \. (1 - t) x + t y) 
   \ \leq \ 
   ( \. (1 - t) \norm{x} + t \norm{y})^{\. \a} \. 
   \ < \ 
   (1 - t) f \. (x) + t f \. (y) 
   \] 
   since the function ${\f : [0 , {+\infty}) \to \RR}$ defined by ${\f(s) \. \as s^{\a} \!}$ 
   is non-decreasing and strictly convex. 
   
   \noindent Now, assume that the equality ${\norm{x} = \norm{y}}$ holds. 
   
   \noindent If we had ${\scal{x}{y} = \norm{x} \norm{y}}$, 
   the vectors $x$ and $y$ would be collinear by the equality case in the Cauchy-Schwarz inequality, 
   which means that there would exist ${\l \in \RR}$ satisfying ${y = \l x}$. 
   
   \noindent Therefore, we would obtain ${\l \norm{x}^{\. 2} \! = \scal{x}{y} = \norm{x}^{\. 2} \!}$, 
   which yields $\l = 1$ since $x$ is not equal to the zero vector 
   (otherwise we would have ${x = y = 0}$ which is not possible), and hence ${y = x}$, 
   which is not possible. 
   
   \noindent So we have ${\scal{x}{y} < \norm{x} \norm{y}}$, which implies 
   %%%%%%%%%%
   \begin{eqnarray*} % ===> HAL ne gère pas \\ avec l'environnement multline !!!
      \norm{\. (1 - t) x + t y}^{\. 2} \! \! 
      & = & \! 
      (1 - t)^{\. 2} \norm{x}^{\. 2} \! + 2 t (1 - t) \scal{x}{y} + t^{2} \norm{y}^{\. 2} \\ 
      & = & \! 
      2 t (1 - t) \. \big[ \. \scal{x}{y} - \norm{x} \norm{y} \big] + \norm{x}^{\. 2} \. 
      \ < \ \norm{x}^{\. 2} \!~, 
   \end{eqnarray*} 
   %%%%%%%%%%
   
   \vspace{-6pt}
   
   \noindent and hence we get 
   \[
   f \. ( \. (1 - t) x + t y) 
   \ = \ 
   (\norm{\. (1 - t) x + t y}^{\. 2})^{\a \. / \. 2} \. 
   \ < \ (\norm{x}^{\. 2})^{\a \. / \. 2} \! 
   \ = \ 
   \norm{x}^{\. \a} \! 
   \ = \ 
   (1 - t) f \. (x) + t f \. (y) 
   \] 
   since the function ${\f : [0 , {+\infty}) \to \RR}$ defined by 
   ${\f(s) \. \as s^{\a \. / \.2} \!}$ is increasing. 
\end{enumerate} 
\end{remark}

\bigskip

The next property gives a way for constructing new (strictly) convex functions from old ones. 

\bigskip

\begin{proposition} \label{prop:ac} 
   Let $C \.$ be a subset of a real vector space, ${\. f \. : C \to \RR}$ a non-negative function 
   and ${\a \geq 1}$ a real number. Then we have the implication 
   \[
   f \. \ \mbox{is (strictly) convex} 
   \qquad \imp \qquad \. 
   f^{\a} \! \ \mbox{is (strictly) convex}~.
   \] 
\end{proposition}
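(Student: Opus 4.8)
The plan is to realize $f^{\a}$ as the composition ${\f \comp f}$, where ${\f : [0 , {+\infty}) \to \RR}$ is the single-variable function ${\f(s) \as s^{\a}}$, and then to exploit the two elementary facts about $\f$ that were already invoked in Remark~\ref{rem:sc}: for ${\a \geq 1}$ the function $\f$ is convex (and strictly convex as soon as ${\a > 1}$), while for any ${\a > 0}$ it is non-decreasing, indeed strictly increasing on $[0 , {+\infty})$. Since $f$ is non-negative, its range lies in the domain of $\f$, so the composition ${\f \comp f}$ is well defined and coincides with $f^{\a}$ everywhere on $C \.$.

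For the (plain) convex case I would fix points ${x , y \in C}$ and a number ${t \in (0 , 1)}$, set ${z \as (1 - t) x + t y}$, ${a \as f \. (x)}$ and ${b \as f \. (y)}$, and chain two inequalities. First, the convexity of $f$ (Definition~\ref{def:f-cvx+strict-cvx}) gives ${f \. (z) \leq (1 - t) a + t b}$, and applying the non-decreasing function $\f$ to both sides yields ${\f(f \. (z)) \leq \f( \. (1 - t) a + t b)}$. Second, the convexity of $\f$ gives ${\f( \. (1 - t) a + t b) \leq (1 - t) \f(a) + t \f(b)}$. Combining the two produces ${f^{\a}(z) \leq (1 - t) f^{\a}(x) + t f^{\a}(y)}$, which is precisely the convexity of $f^{\a}$.

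For the strict case the value ${\a = 1}$ is trivial, since then ${f^{\a} = f}$, so I would assume ${\a > 1}$ and take ${x \neq y}$. The only modification occurs at the first step: the strict convexity of $f$ now gives the strict inequality ${f \. (z) < (1 - t) a + t b}$, and since $\f$ is strictly increasing this propagates to ${\f(f \. (z)) < \f( \. (1 - t) a + t b)}$. The second, merely convex, inequality then closes the chain to yield ${f^{\a}(z) < (1 - t) f^{\a}(x) + t f^{\a}(y)}$, establishing the strict convexity of $f^{\a}$.

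The one point that requires a little care, and which I expect to be the main subtlety rather than a genuine obstacle, is ensuring that the strict inequality is not lost when ${f \. (x) = f \. (y)}$. This is exactly why I would route the strictness through the strict \emph{monotonicity} of $\f$ applied to the strict convexity of $f$, rather than trying to extract it from the strict convexity of $\f$: the latter would degenerate precisely when the argument ${(1 - t) a + t b}$ collapses to a single value, whereas the former stays strict regardless of whether $a$ and $b$ coincide.
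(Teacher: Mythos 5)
Your proof is correct and takes essentially the same route as the paper, whose entire proof is the one-line observation that $\varphi(t) = t^{\alpha}$ is convex and increasing on $[0 , {+\infty})$; you have simply made explicit the composition argument that this remark encodes. Your closing point---that strictness must be propagated through the strict \emph{monotonicity} of $\varphi$ applied to the strict convexity of $f$, rather than through the strict convexity of $\varphi$ (which degenerates when $f(x) = f(y)$)---is precisely the implicit content of the paper's appeal to ``increasing'', so the two arguments coincide.
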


\bigskip

\begin{proof} 
This is straightforward since the function ${\f : [0 , {+\infty}) \to \RR}$ defined by 
${\f(t) \. \as t^{\a} \!}$ is both convex and increasing. 
\end{proof}

\bigskip

\begin{remark*} 
Of course, the converse of the implication in Proposition~\ref{prop:ac} is not true 
as we can easily see with ${\a \as 2}$ and ${\. f \. : [0 , {+\infty}) \to \RR}$ defined by 
${\disp \. f \. (t) \. \as \sqrt{t}}$ (for convexity) 
or by ${\. f \. (t) \. \as {t}}$ (for strict convexity). 
\end{remark*}

%%%%%%%%%%%%%%%%%%%%

\subsection{Topological aspects of convexity}

We shall now deal with the topological notion of strict convexity 
for subsets of a general topological real vector space. 

\bigskip

For this purpose, we will first need to introduce the relative interior, closure and boundary 
of an arbitrary subset of a topological real vector space.

\bigskip

\begin{definition} \label{def:ri+rc+rb} 
   Let $S$ be a subset of a topological real vector space $V \!\.$. 
   
   \begin{enumerate}
      \item The \emph{relative interior} $\ri{S}$ of $S$ is the interior of $S$ 
      with respect to the relative topology of $\Aff{S}$ in $V \!\.$. 
      
      \smallskip
      
      \item The \emph{relative closure} $\rc{S}$ of $S$ is the closure of $S$ 
      with respect to the relative topology of $\Aff{S}$ in $V \!\.$. 
      
      \smallskip
      
      \item The \emph{relative boundary} $\rb{S}$ of $S$ is the boundary of $S$ 
      with respect to the relative topology of $\Aff{S}$ in $V \!\.$ 
      (so we have $\rb{S} = \rc{S} \setmin \ri{S}$). 
   \end{enumerate} 
\end{definition}

\bigskip

We obviously have $\intr{S} \inc \ri{S}$ and $\rc{S} \inc \clos{S} \.$, 
which yields $\rb{S} \inc \bd{S} \.$. 

\bigskip

\begin{proposition} \label{prop:ri-rc-rb} 
   Let $V \!\.$ be a topological real vector space. 
   
   \begin{enumerate}
      \item For any subsets $A$ and $B \.$ of $V \!\!$, we have the implication 
      $A \inc B \imp \rc{A} \inc \rc{B}$. 
      
      \smallskip
      
      \item For any subset $S \.$ of $V \!\!$, we have 
      
      \smallskip
      
      \begin{enumerate}[(a)]
         \item $\Aff{\rc{S} \.} = \Aff{S}$, and 
         
         \smallskip
         
         \item $\ri{S} \neq \Oset$ $\iff$ ($S \neq \Oset$ and $\Aff{\ri{S} \.} = \Aff{S} \.$). 
      \end{enumerate} 
   \end{enumerate} 
\end{proposition}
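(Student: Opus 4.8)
The plan is to prove the three assertions in turn, spending almost all the effort on the forward implication of Point~2(b). For Point~1, the key observation is that $A \inc B$ forces $\Aff{A} \inc \Aff{B}$, so $\Aff{A}$ carries the subspace topology induced from $\Aff{B}$ (transitivity of subspace topologies). I would argue that $\rc{B}$ is relatively closed in $\Aff{B}$ by Definition~\ref{def:ri+rc+rb}, hence its trace $\rc{B} \cap \Aff{A}$ is relatively closed in $\Aff{A}$; since $A \inc B \inc \rc{B}$ and $A \inc \Aff{A}$, this trace contains $A$. As $\rc{A}$ is by definition the smallest relatively closed subset of $\Aff{A}$ containing $A$, I get $\rc{A} \inc \rc{B} \cap \Aff{A} \inc \rc{B}$, with no real obstacle.

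For Point~2(a), I would simply exploit the sandwich $S \inc \rc{S} \inc \Aff{S}$: the first inclusion holds because a set sits inside its relative closure, the second because the relative closure is computed inside $\Aff{S}$. Applying the monotone operator $\Aff{\cdot}$ and using that an affine subspace equals its own affine hull (so $\Aff{\Aff{S} \.} = \Aff{S}$) squeezes $\Aff{S} \inc \Aff{\rc{S} \.} \inc \Aff{S}$, giving the equality.

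For Point~2(b), the reverse implication is immediate: if $\ri{S} = \Oset$ then $\Aff{\ri{S} \.} = \Aff{\Oset} = \Oset$ by the explicit description in Definition~\ref{def:aff-hull}, whereas $S \neq \Oset$ forces $\Aff{S} \neq \Oset$, so the equality $\Aff{\ri{S} \.} = \Aff{S}$ can only hold when $\ri{S} \neq \Oset$. The forward implication is the heart of the matter. Assuming $\ri{S} \neq \Oset$, I get $S \neq \Oset$ for free (since $\ri{S} \inc S$) and $\Aff{\ri{S} \.} \inc \Aff{S}$ by monotonicity; the real work is the reverse inclusion $\Aff{S} \inc \Aff{\ri{S} \.}$. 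Here I would fix $a \in \ri{S}$ and write $\Aff{S} = a + W$ with $W$ the direction subspace, so that a relatively open neighborhood of $a$ contained in $S$ has the form $a + (O \cap W)$ for some open neighborhood $O$ of the origin in $V \!\.$, and this neighborhood lies in $\ri{S}$. The crucial step is that $O \cap W$ is a neighborhood of the origin in the topological vector space $W \!\!$, hence absorbing with $\Vect{O \cap W} = W$ by Point~1.b of Remark~\ref{rem:0-tvs} (through Point~4 of Proposition~\ref{prop:abs-cone}); since $0 \in O \cap W$, Proposition~\ref{prop:vect-vs-aff-bis} upgrades this to $\Aff{O \cap W} = W \!\.$. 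Translating back (the affine hull commutes with translation by $a$, as the defining combinations have coefficient sum $1$) yields $\Aff{a + (O \cap W) \.} = a + W = \Aff{S}$, and since $a + (O \cap W) \inc \ri{S}$ this forces $\Aff{S} \inc \Aff{\ri{S} \.}$.

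The main obstacle, then, is the forward direction of Point~2(b): converting the purely topological hypothesis that $\ri{S}$ is nonempty into the algebraic spanning statement $\Aff{S} \inc \Aff{\ri{S} \.}$. All the leverage comes from the fact that every neighborhood of the origin in a topological vector space is absorbing, which is exactly what lets me turn the topological ``thickness'' of $\ri{S}$ at a single interior point into the equality $\Vect{O \cap W} = W \!\.$; everything else is monotonicity of $\Aff{\cdot}$ and the idempotence and translation-invariance of the affine hull.
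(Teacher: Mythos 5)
Your proof is correct. Note that the paper itself contains no argument for Proposition~\ref{prop:ri-rc-rb} --- it simply refers to \cite[Proposition~2.7, page~805]{SimVer18} --- so there is no in-text proof to compare against line by line; but your route is the natural one and is fully self-contained in the paper's own toolkit. Point~1 via transitivity of subspace topologies and Point~2(a) via the sandwich ${S \inc \rc{S} \inc \Aff{S}}$ with idempotence of $\Aff{\cdot}$ are both sound, and in the forward direction of Point~2(b) your key move --- translating a point ${a \in \ri{S}}$ to the origin so that a relatively open neighborhood of $a$ inside $S$ becomes a neighborhood ${O \cap W}$ of $0$ in the topological vector space ${W = \Aff{S} - a}$, which is absorbing and hence satisfies ${\Vect{O \cap W} = W}$ (Point~1.b of Remark~\ref{rem:0-tvs} together with Point~4 of Proposition~\ref{prop:abs-cone}), then upgraded to ${\Aff{O \cap W} = W}$ via Proposition~\ref{prop:vect-vs-aff-bis} since ${0 \in O \cap W}$ --- is exactly the leverage the statement requires, and your appeals to monotonicity and translation-invariance of the affine hull are all legitimate.
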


\bigskip

We refer to~\cite[Proposition~2.7, page~805]{SimVer18} for a proof of this result. 

\bigskip

Before we give the definition of a strictly convex set---which is a central notion 
in the present work---let us first recall some useful topological results about convex subsets 
of a general topological real vector space. 

\bigskip

\begin{proposition}[see~{\cite[Chapitre~II, pages~14 and~15]{BouEVT81}}] \label{prop:conv-top} 
   For any convex subset $C \.$ of a topological real vector space, 
   we have the following properties: 
   
   \begin{enumerate}
      \item The closure $\clos{C} \.$ of $C \.$ is also convex. 
      
      %\smallskip
      
      \item For any $x \in \intr{C} \.$ and $y \in \clos{C} \.$, we have ${]x , y[} \inc \intr{C} \.$. 
   \end{enumerate} 
\end{proposition}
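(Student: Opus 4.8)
The plan is to treat the two points separately, relying only on the continuity of the vector-space operations and on the basic structure of neighborhoods of the origin in a topological vector space.

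For Point~1, I would exploit that for each fixed $t \in [0,1]$ the map $m_t : V \cart V \to V$ given by $m_t(x,y) \as (1-t)x + ty$ is continuous, since addition and scalar multiplication are continuous. Convexity of $C$ says precisely that $m_t(C \cart C) \inc C$ for all $t \in [0,1]$. Using the elementary facts that $\clos{C \cart C} = \clos{C} \cart \clos{C}$ in the product topology and that a continuous map sends $\clos{A}$ into $\clos{m_t(A)}$, I would obtain $m_t(\clos{C} \cart \clos{C}) \inc \clos{m_t(C \cart C)} \inc \clos{C}$, which is exactly the assertion that $(1-t)x + ty \in \clos{C}$ whenever $x , y \in \clos{C}$. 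Letting $t$ range over $[0,1]$ yields convexity of $\clos{C}$.

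For Point~2, I would fix $x \in \intr{C}$, $y \in \clos{C}$ and $t \in (0,1)$, set $z \as (1-t)x + ty$, and show directly that $z$ is interior by producing a neighborhood $M$ of the origin with $z + M \inc C$. Since $x \in \intr{C}$, I choose a neighborhood $N$ of $0$, which I may take symmetric, with $x + N \inc C$. The algebraic identity I would build on is
\[
(1-t)\Big( x + \tfrac{w + t(y - c)}{1-t} \Big) + t\, c \ = \ z + w ,
\]
valid for any $c , w \in V \!$. Thus, if I can arrange $c \in C$ and $\frac{w + t(y-c)}{1-t} \in N$, then the left-hand side is a convex combination of the two points $x + \frac{w+t(y-c)}{1-t} \in C$ and $c \in C$, hence lies in $C$, giving $z + w \in C$. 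To realize this I would first shrink: by continuity of addition at the origin pick a neighborhood $M$ of $0$ with $M + M \inc (1-t)N$; because $y \in \clos{C}$, I then choose $c \in C$ close enough to $y$ that $t(y - c) \in M$. Finally, for every $w \in M$ one gets $w + t(y-c) \in M + M \inc (1-t)N$, so $\frac{w + t(y-c)}{1-t} \in N$ and the identity places $z + w$ in $C$; hence $z + M \inc C$ and $z \in \intr{C}$.

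The hard part will be that a general topological real vector space need \emph{not} be locally convex, so I cannot assume a base of convex neighborhoods of the origin: the naive estimate $\tfrac{1-t}{2}N + \tfrac{1-t}{2}N \inc (1-t)N$ fails without convexity of $N$. I circumvent this by routing everything through the neighborhood identity $M + M \inc (1-t)N$ coming from continuity of addition, together with the symmetric choice of $N$ to control the sign in $y - c$. The only remaining delicate point is keeping the scalar bookkeeping consistent (the factors $t$, $1-t$, and the division by $1-t$, all legitimate since $t \in (0,1)$); everything else reduces to the continuity of the vector-space operations.
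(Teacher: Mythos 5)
Your proposal is correct, and both points go through in full generality. Note that the paper itself offers no proof of this proposition: it simply cites Bourbaki, so there is nothing internal to compare against, and your self-contained argument is essentially the classical one found there. For Point~1, the reduction to $m_t(\overline{A}) \subseteq \overline{m_t(A)}$ for the continuous map $m_t(x,y) = (1-t)x + ty$, together with $\overline{C \times C} = \overline{C} \times \overline{C}$, is exactly the standard route. For Point~2, your identity
\[
(1-t)\Bigl( x + \tfrac{w + t(y - c)}{1-t} \Bigr) + t\, c \ = \ z + w
\]
checks out by direct expansion, and the bookkeeping is sound: $(1-t)N$ is a neighborhood of $0$ since scaling by $1-t \neq 0$ is a homeomorphism, continuity of addition at the origin yields $M$ with $M + M \subseteq (1-t)N$, and the set of $c$ with $t(y-c) \in M$ is the neighborhood $y - \frac{1}{t}M$ of $y$, which meets $C$ because $y \in \overline{C}$. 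You are also right that no local convexity is needed anywhere — the argument only uses continuity of the operations. One small remark: the symmetry you impose on $N$ is never actually invoked; the sign in $y - c$ is already controlled by choosing $c$ so that $t(y-c) \in M$ (rather than $-M$), which is legitimate since $-\frac{1}{t}M$ is a neighborhood of $0$. You could drop that hypothesis, or, if you prefer to keep a symmetric set, place the symmetry on $M$ where it would actually matter under the alternative choice $c \in \bigl(y + \frac{1}{t}M\bigr) \cap C$.
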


\bigskip

\begin{remark} \label{rem:cvx-open-segment} 
In particular, Point~2 in Proposition~\ref{prop:conv-top} 
implies that for any ${x , y \in \clos{C}}$ we have the implication 
\[
{]x , y[} \cap \intr{C} \ \neq \ \Oset 
\qquad \imp \qquad 
{]x , y[} \inc \intr{C} \.~.
\] 
\end{remark}

\bigskip

\begin{definition} 
   A subset $C$ of a topological real vector space $V \!$ 
   is said to be \emph{strictly convex} if
   for any two distinct points $x , y \in \rc{C}$ we have ${]x , y[} \inc \ri{C \.}$. 
\end{definition}

\bigskip

\begin{remark}\label{rem:strict-cvx} \ 

\begin{enumerate}[1)]
   \item This definition coincides with the usual one 
   when $V \!$ is the canonical topological real vector space $\Rn{n}$ 
   (see for example~\cite[page~2]{Gru07} and~\cite[page~87]{Sch14}) 
   since in this case the closeness of $\Aff{C}$ in $V \!$ yields ${\rc{C} = \clos{C \.}}$. 
   
   \medskip
   
   \item A strictly convex subset of $V \!$ is of course convex. 
   
   \medskip
   
   \item It is to be noticed that the property of being strictly convex for $C$ 
   involves the topology of $V \!$ whereas the property of just being convex does not. 
   
   \medskip
   
   \item A subset $C$ of $V \!$ is strictly convex if and only if it is convex 
   and for any two distinct points ${x , y \in \rb{C}}$ we have ${{]x , y[} \inc \ri{C \.}}$. 
   Indeed, the necessary condition is an easy consequence of the very definition 
   of the strict convexity of $C$ combined with Point~2 above. 
   And the sufficient condition is obtained by Point~2 in Proposition~\ref{prop:conv-top}. 
   
   \noindent According to the common geometric intuition, this means that $C$ is convex 
   and that there is no non-trivial segment in the relative boundary of $C \.$. 
\end{enumerate} 
\end{remark}

\bigskip

\begin{proposition} \label{prop:ri-strict-cvx} 
   For any strictly convex subset $C \.$ of a topological real vector space, we have the implication 
   \[
   C \ \neq \ \Oset 
   \qquad \imp \qquad 
   \ri{C} \ \neq \ \Oset \.~.
   \] 
\end{proposition}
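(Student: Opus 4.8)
The plan is to use the defining property of strict convexity as directly as possible, distinguishing two cases according to whether $C$ has one or several points. Throughout I would rely on the elementary inclusion ${C \inc \rc{C}}$, which holds because the relative closure of any set contains that set; in particular ${C \neq \Oset}$ forces ${\rc{C} \neq \Oset}$ as well.

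First I would dispose of the non-degenerate case. Assume that $C$ contains two distinct points $x$ and $y$. Since ${x , y \in C \inc \rc{C}}$, the very definition of strict convexity of $C$ gives ${]x , y[} \inc \ri{C}$. As ${x \neq y}$, the open line segment ${]x , y[}$ is nonempty, and therefore ${\ri{C} \neq \Oset}$.

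It then remains to treat the degenerate case where $C$ reduces to a single point $c$, that is, ${C = \{ c \}}$ (this is the only possibility left once $C$ is nonempty but does not contain two distinct points). In that situation the affine hull is ${\Aff{C} = \{ c \}}$, so the relative topology that $\Aff{C}$ induces on this one-point set is trivial and $C$ is open in it. Hence ${\ri{C} = C = \{ c \} \neq \Oset}$, which closes this case and completes the argument.

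I expect the singleton case to be the only real subtlety: one must keep in mind that ${\ri{C}}$ is computed in the relative topology of $\Aff{C}$ and \emph{not} in the ambient topology of $V$, so that a single point is its own relative interior even though its ordinary interior ${\intr{C}}$ in $V$ is in general empty. It is worth stressing that the conclusion genuinely exploits strict convexity rather than mere convexity, since a nonempty convex subset of an infinite-dimensional topological real vector space need not have a nonempty relative interior.
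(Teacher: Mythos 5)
Your proof is correct. Note that the paper does not actually reproduce an argument for this proposition---it simply refers to~\cite[Proposition~2.9, page~807]{SimVer18}---so your two-case proof supplies a self-contained justification, and it is the natural one: for two distinct points ${x , y \in C \inc \rc{C}}$ the definition of strict convexity gives ${{]x , y[} \inc \ri{C}}$ with ${]x , y[} \neq \Oset$, while for a singleton ${C = \{ c \}}$ one has ${\Aff{C} = \{ c \}}$, so that $C$ is open in the relative topology and ${\ri{C} = C \neq \Oset}$. Your closing remarks are also on point: the singleton case is exactly where one must remember that $\ri{C}$ is taken in $\Aff{C}$ rather than in the ambient space, and strict convexity is genuinely needed, since a non-empty convex set in an infinite-dimensional space can have empty relative interior (e.g.\ the positive cone of $\ell^{2}$, whose affine hull is the whole space but whose interior is empty).
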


\bigskip

We refer to~\cite[Proposition~2.9, page~807]{SimVer18} for a proof of this result. 

\bigskip

\begin{remark} 
When dealing with a single strictly convex subset $C$ 
of a general topological real vector space $V \!\!$, we will always assume in the hypotheses 
that $C$ has a non-empty interior in $V \!$ in order to insure $\Aff{C} = \. V \!\!$, 
and this makes sense by Proposition~\ref{prop:ri-strict-cvx} 
and Point~2.b in Proposition~\ref{prop:ri-rc-rb}. 
\end{remark}

\bigskip

\begin{proposition} \label{prop:C1-C2-strictly-cvx} 
   Let $C_{1 \. \!}$ and $C_{2 \!}$ be subsets of a topological real vector space 
   
   which satisfy ${\ri{C_{1}} \inc C_{2} \inc \rc{C_{1}}}$. Then we have the implication
   \[
   C_{1 \. \!} \ \mbox{is strictly convex} 
   \qquad \imp \qquad 
   C_{2 \!} \ \mbox{is strictly convex}~.
   \] 
\end{proposition}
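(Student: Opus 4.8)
The plan is to reduce the strict convexity of $C_{2}$ to that of $C_{1}$ by first checking that the two sets share the same affine hull, and then comparing their relative interiors and relative closures. Recall that by definition $C_{2}$ is strictly convex once we show that ${]x , y[} \inc \ri{C_{2}}$ for any two distinct points $x , y \in \rc{C_{2}}$; so it suffices to produce the inclusions $\rc{C_{2}} \inc \rc{C_{1}}$ and $\ri{C_{1}} \inc \ri{C_{2}}$, after which the strict convexity of $C_{1}$ does the rest. The delicate point throughout is that $\ri{\cdot}$ and $\rc{\cdot}$ are computed relative to the affine hull, so nothing can really be compared until the affine hulls are matched.

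First I would dispose of the degenerate case $C_{1} = \Oset$: then $\ri{C_{1}} = \rc{C_{1}} = \Oset$, the hypothesis forces $C_{2} = \Oset$, and the empty set is vacuously strictly convex. So assume $C_{1} \neq \Oset$. Since $C_{1}$ is strictly convex, Proposition~\ref{prop:ri-strict-cvx} gives $\ri{C_{1}} \neq \Oset$, and hence $\Aff{\ri{C_{1}}} = \Aff{C_{1}}$ by Point~2.b in Proposition~\ref{prop:ri-rc-rb}. From $\ri{C_{1}} \inc C_{2} \inc \rc{C_{1}}$ I would then take affine hulls and use $\Aff{\rc{C_{1}}} = \Aff{C_{1}}$ (Point~2.a in Proposition~\ref{prop:ri-rc-rb}) to sandwich $\Aff{C_{1}} = \Aff{\ri{C_{1}}} \inc \Aff{C_{2}} \inc \Aff{\rc{C_{1}}} = \Aff{C_{1}}$, which yields $\Aff{C_{2}} = \Aff{C_{1}}$.

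With the affine hulls identified, the two required inclusions follow quickly. For $\rc{C_{2}} \inc \rc{C_{1}}$, apply the monotonicity of the relative closure (Point~1 in Proposition~\ref{prop:ri-rc-rb}) to $C_{2} \inc \rc{C_{1}}$, obtaining $\rc{C_{2}} \inc \rc{\rc{C_{1}}}$; since a relative closure is relatively closed and $\Aff{\rc{C_{1}}} = \Aff{C_{1}}$, one has $\rc{\rc{C_{1}}} = \rc{C_{1}}$. For $\ri{C_{1}} \inc \ri{C_{2}}$, note that $\ri{C_{1}}$ is open in the relative topology of $\Aff{C_{1}} = \Aff{C_{2}}$ and is contained in $C_{2}$; being a relatively open subset of $C_{2}$, it lies inside the largest such set, namely $\ri{C_{2}}$.

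Finally, I would combine everything: given distinct $x , y \in \rc{C_{2}}$, the inclusion $\rc{C_{2}} \inc \rc{C_{1}}$ puts them in $\rc{C_{1}}$, so the strict convexity of $C_{1}$ gives ${]x , y[} \inc \ri{C_{1}} \inc \ri{C_{2}}$, which is exactly what the definition of strict convexity of $C_{2}$ requires. The main obstacle is really the bookkeeping of affine hulls: the hypothesis $\ri{C_{1}} \inc C_{2} \inc \rc{C_{1}}$ is what forces $\Aff{C_{2}} = \Aff{C_{1}}$, and without that equality neither $\rc{C_{2}} \inc \rc{C_{1}}$ nor $\ri{C_{1}} \inc \ri{C_{2}}$ would even be meaningful, since the relative operators would be taken with respect to different affine subspaces.
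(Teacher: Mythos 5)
Your proof is correct and follows essentially the same route as the paper's: both handle the empty case separately, use Proposition~\ref{prop:ri-strict-cvx} and Points~1, 2.a and~2.b of Proposition~\ref{prop:ri-rc-rb} to match the affine hulls and deduce $\ri{C_{1}} \inc \ri{C_{2}}$ and $\rc{C_{2}} \inc \rc{\rc{C_{1}}} = \rc{C_{1}}$, then conclude via the strict convexity of $C_{1}$. Your write-up merely makes explicit a couple of steps the paper leaves implicit (the idempotence of $\rc{\cdot}$ and the maximality argument for $\ri{C_{2}}$), which is fine.
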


\bigskip

\begin{proof} 
Assume that $C_{1 \. \!}$ is strictly convex. 

\medskip

\noindent \textasteriskcentered \ 
In case when $C_{1 \. \!}$ is empty, the sets $\ri{C_{1}}$ and $\rc{C_{1}}$ are empty too, 
which implies that $C_{2 \!}$ is empty, and hence strictly convex.  

\medskip

\noindent \textasteriskcentered \ 
In case when $C_{1 \. \!}$ is not empty, we have ${\ri{C_{1}} \neq \Oset}$ 
according to Proposition~\ref{prop:ri-strict-cvx}. 

\smallskip

Therefore, Points~2.b and~2.a in Proposition~\ref{prop:ri-rc-rb} 
yield ${\Aff{\ri{C_{1}} \.} = \Aff{C_{1}} = \Aff{\rc{C_{1}} \.}}$, 
and hence we get ${\Aff{C_{2}} = \Aff{C_{1}}}$ 
owing to the hypothesis ${\ri{C_{1}} \inc C_{2} \inc \rc{C_{1}}}$. 

\smallskip

As a consequence, one obtains ${\ri{C_{1}} \inc \ri{C_{2}}}$. 

\smallskip

Now, any two points ${x \neq y}$ in ${\rc{C_{2}} \inc \rc{\rc{C_{1}} \.} = \rc{C_{1}}}$ 
satisfy ${{]x , y[} \inc \ri{C_{1}}}$ since $C_{1}$ is strictly convex, 
and hence we get ${{]x , y[} \inc \ri{C_{2}}}$ 
from the obvious inclusion ${\ri{C_{1}} \inc \ri{C_{2}}}$. 

\smallskip

This proves that $C_{2 \!}$ is strictly convex. 
\end{proof}

\bigskip

\begin{proposition} \label{prop:strict-cvx-cone} 
   Let $C \.$ be a strictly convex cone in a topological real vector space $V \!\.$ 
   whose interior is not empty and which is not equal to the whole space $V \!\!$. 
   
   \smallskip
   
   Then $V \!\.$ is one-dimensional and $C \cup \{ 0 \}$ is a ray. 
\end{proposition}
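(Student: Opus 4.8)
The plan is to reduce everything to the closed convex cone $K \as \clos{C}$ and exploit strict convexity in the simplest possible way: along a single ray. Since $\intr{C} \neq \Oset$, the affine hull $\Aff{C}$ has nonempty interior and therefore equals $V \!$, so that $\ri{C} = \intr{C}$ and $\rc{C} = \clos{C}$; thus the hypothesis of strict convexity reads: any two distinct points of $K$ have their open segment inside $\intr{C}$. Two quick observations come first. We have $0 \not\in \intr{C}$, for otherwise $C$ would be a cone whose interior contains the origin, forcing $C = V \!$ by Point~2 in Remark~\ref{rem:0-tvs}, against the hypothesis. Moreover $K$ is \emph{salient}, i.e. $K \cap ({-K}) = \{ 0 \}$: indeed, if $v \neq 0$ satisfied $v , {-v} \in K = \rc{C}$, these would be two distinct points of $K$ and strict convexity would put the midpoint $0 \in {]v , {-v}[} \inc \ri{C} = \intr{C}$, a contradiction. (Here $K$ is a closed convex cone by Point~3 in Remark~\ref{rem:coneop} and Point~1 in Proposition~\ref{prop:conv-top}.)

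The key step is to show that \emph{every nonzero point of $K$ is interior}, namely $\intr{C} = K \setmin \{ 0 \}$ and $\clos{C} = C \cup \{ 0 \}$. I would fix $a \in K \setmin \{ 0 \}$ and apply strict convexity to the two \emph{collinear} points $a$ and $2a$ of $K$: the point $\tfrac{3}{2} a$ lies in ${]a , 2a[} \inc \ri{C} = \intr{C}$, and since the interior of a cone is again a cone (Point~3 in Remark~\ref{rem:coneop}), scaling gives $a = \tfrac{2}{3}\bigl(\tfrac{3}{2} a\bigr) \in \intr{C}$. Hence $K \setmin \{ 0 \} \inc \intr{C} \inc C \inc K$, which yields both $\clos{C} = C \cup \{ 0 \}$ and, using $0 \not\in \intr{C}$, the equality $\intr{C} = K \setmin \{ 0 \}$. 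In particular $K \setmin \{ 0 \}$ is open, its only possible topological boundary point being the origin; and $K \neq V \!$ (a salient cone cannot be all of $V \!$ unless $V = \{ 0 \}$, while $\intr{C} \neq \Oset$ forces $V \neq \{ 0 \}$).

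The dimension is then pinned down by a separation-plus-connectedness argument, which I expect to be the main obstacle, mostly because $V \!$ is only assumed to be a (possibly non-Hausdorff) topological real vector space. Since $\intr{C} = K \setmin \{ 0 \}$ is a nonempty \emph{open} convex set not containing $0$, the separation theorem (valid in an arbitrary topological vector space precisely because the set is open) provides a nonzero continuous linear functional $\psi$ with $\psi(0) = 0$ and, after using that $\intr{C}$ is a cone to locate the separating constant at $0$, with $\psi > 0$ on $K \setmin \{ 0 \}$. Consider the base $B \as K \cap \inv{\psi}(1)$: on the affine hyperplane $H \as \inv{\psi}(1)$ one has $K \cap H = (K \setmin \{ 0 \}) \cap H$, so $B$ is open in $H$ (intersection of the open set $K \setmin \{ 0 \}$ with $H$) and closed in $H$ (intersection of the closed set $K$ with $H$). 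As $H$ is a translate of the linear subspace $\Ker{\psi}$ it is connected, and $B$ is nonempty, so $B = H$, whence $H \inc K$. If $\dim{V} \geq 2$ then $\Ker{\psi} \neq \{ 0 \}$; picking $u \in \Ker{\psi} \setmin \{ 0 \}$ and a point $b_{0} \in H$, the whole line $b_{0} + \RR u$ lies in $H \inc K$, and scaling $\tfrac{1}{|t|}(b_{0} + t u)$ and letting $t \to \pm \infty$ gives $\pm u \in \clos{K} = K$ (by continuity of the vector space operations and closedness of $K$), contradicting salience. Therefore $\dim{V} \leq 1$, and since $V \neq \{ 0 \}$ we get $\dim{V} = 1$.

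It only remains to conclude. In the one-dimensional space $V \!$, the set $C \cup \{ 0 \} = \clos{C} = K$ is a closed salient convex cone with nonempty interior that is not the whole line, hence of the form $[0 , {+\infty}) v$ for some $v \neq 0$, i.e. a ray. This gives both assertions of the statement. The delicate points to write carefully will be the legitimacy of the separation theorem and of the connectedness of $H$ without a Hausdorff assumption, together with the clean bookkeeping of the identifications $\ri{C} = \intr{C}$ and $\rc{C} = \clos{C}$ that make the strict-convexity hypothesis usable in its $\intr{C}$/$\clos{C}$ form.
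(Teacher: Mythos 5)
Your proof is correct, and it departs genuinely from the paper's in its second half. Up to the key identity $\clos{C} = \intr{C} \cup \{ 0 \} = C \cup \{ 0 \}$, the two arguments are close in spirit: the paper (for pointed $C$) observes that $\bd{C}$ is a cone, so a nonzero boundary point would place the whole ray $[0 , {+\infty}) x$ --- hence non-trivial open segments --- inside $\bd{C}$, contradicting strict convexity; your ``$a$ and $2a$'' trick exploits strict convexity along a single ray in the same way, and working from the outset with $K \as \clos{C}$ lets you bypass the paper's pointed/blunt case split (the paper treats the blunt case by reduction to the pointed one via Proposition~\ref{prop:C1-C2-strictly-cvx}, using $x \. / \. n \to 0$). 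The real divergence is in forcing $\dim{V} = 1$: the paper notes that $C \setmin \{ 0 \} = \intr{C}$ is non-empty, proper, and simultaneously open and closed in $V \setmin \{ 0 \}$, so the punctured space is disconnected, which is impossible in dimension at least two since $V \setmin \{ 0 \}$ is then arcwise connected; you instead invoke the geometric Hahn--Banach theorem to separate the non-empty open convex cone $\intr{C} = K \setmin \{ 0 \}$ from the origin, and show that the base $B = K \cap \inv{\psi}(1)$ is non-empty and clopen in the connected hyperplane $\inv{\psi}(1)$, so that $\inv{\psi}(1) \inc K \.$, after which salience rules out $\dim{V} \geq 2$. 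Both routes are sound: separation of a non-empty open convex set from a disjoint linear variety does hold in an arbitrary topological real vector space (no Hausdorffness or local convexity is needed --- precisely the point you flag --- since the gauge of the open set does the work and a linear functional bounded above on a non-empty open set is continuous), and your limit argument $\tfrac{1}{|t|}(b_{0} + t u) \to \pm u$ inside the closed set $K$ is also valid without Hausdorffness. What each approach buys: the paper's clopen argument is lighter and self-contained, resting only on the connectedness of $V \setmin \{ 0 \}$ in dimension at least two, whereas yours imports heavier machinery but extracts the stronger structural fact that $K$ contains a whole affine hyperplane as a base, which makes the contradiction with salience immediate; your uniform use of $K$ is also a tidy structural simplification over the paper's two-case layout. (Your closing step needs no topological care: a salient convex cone other than $\{ 0 \}$ in a one-dimensional space is a ray purely algebraically.)
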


\bigskip

\begin{proof} \ 

\noindent \textasteriskcentered \ 
Assume first that $C$ is pointed. 

\smallskip

Then the boundary $\bd{C}$ of $C$ is also a cone by Point~3 in Remark~\ref{rem:coneop}. 

\smallskip

Since $C$ is not equal to the whole space $V \!\!$, 
we have ${0 \in C \,\. \setmin \:\! \intr{C} \inc \bd{C}}$ 
by Point~2 in Remark~\ref{rem:0-tvs}, and hence the boundary $\bd{C}$ is reduced to the origin 
(indeed, if there were a point ${x \neq 0}$ in $\bd{C \.}$, 
then the ray ${[0 , {+\infty}) x}$ would lie in the pointed cone $\bd{C \.}$, 
and this is not possible since $C$ is strictly convex). 

\smallskip

As a consequence, we get ${C \inc \clos{C} = \intr{C} \cup \bd{C} = \intr{C} \cup \{ 0 \} \inc C \.}$, 
that is, ${C = \clos{C} = \intr{C} \cup \{ 0 \}}$. 

\smallskip

We therefore have ${C \,\. \setmin \{ 0 \} = \clos{C} \cap (V \setmin \{ 0 \} \. )}$, 
which proves that ${C \,\. \setmin \{ 0 \}}$ is closed in ${V \setmin \{ 0 \}}$. 

\smallskip

On the other hand, one has ${C \,\. \setmin \{ 0 \} = \intr{C} \inc V \setmin \{ 0 \}}$, 
which shows that ${C \,\. \setmin \{ 0 \}}$ is open in ${V \setmin \{ 0 \}}$. 

\smallskip

Since ${C \,\. \setmin \{ 0 \} = \intr{C}}$ is not empty 
and different from ${V \setmin \{ 0 \}}$ by hypothesis, 
this implies that ${V \setmin \{ 0 \}}$ is not connected. 

\smallskip

Therefore, $V \!$ is one-dimensional since any topological real vector space of dimension 
greater than one is arcwise connected. 

\smallskip

Finally, since $V \!$ is the union of two distinct rays, 
the pointed cone $C$ is equal to one of these rays by Point~1 in Remark~\ref{rem:coneop} 
since we have ${C \neq V \!}$ and ${C \neq \{ 0 \}}$. 

\medskip

\noindent \textasteriskcentered \ 
Assume now that $C$ is blunt. 

\smallskip

Let us consider the pointed cone ${D \as C \cup\{ 0 \}}$ 
whose interior contains ${\intr{C} \neq \Oset \.}$. 

\smallskip

For any ${x \in C \.}$, the sequence $\seq{x \. / \. n}{n}{1}$ is in the cone $C$ 
and converges to $0$, which yields ${0 \in \clos{C \.}}$. 

\smallskip

This proves the inclusion ${D \inc \clos{C \.}}$, which implies that $D$ is strictly convex 
according to Proposition~\ref{prop:C1-C2-strictly-cvx} with ${C_{1 \!} \as C}$ and ${C_{2} \as D \.}$. 

\smallskip

Therefore, the previous point implies that the pointed cone $D$ is equal to either $V \!$ or a ray. 

\smallskip

But the first case is not possible since we would get 
${C = D \,\. \setmin \{ 0 \} = V \setmin \{ 0 \}}$, which is not a convex set 
(notice that we have ${\Oset \neq \intr{C} \inc C \inc V \setmin \{ 0 \} \.}$, 
and hence ${V \neq \{ 0 \}}$). 
\end{proof}

\bigskip

Let us now end this section by recalling some useful results 
about the continuity of convex functions that we will need in the sequel. 

\bigskip

\begin{proposition} \label{prop:gauge-lsc} 
   For any closed convex subset $S \.$ of a topological real vector space which contains the origin, 
   the gauge function $p_{S}$ of $S \.$ is lower semi-continuous. 
\end{proposition}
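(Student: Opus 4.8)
The plan is to reduce the statement entirely to the characterization of lower semi-continuity already established in Proposition~\ref{prop:gauge-top}. The crucial preliminary observation is that a convex set containing the origin is star-shaped about the origin: this is exactly Point~1 in Remark~\ref{rem:cvx+star-shaped+cone}. Hence $S$ is a star-shaped subset of $V \!$, and every result of Subsection~\ref{subsec:Continuity of gauge functions}---in particular Proposition~\ref{prop:gauge-top}---applies verbatim to $S \.$. Once this is noted, I would not need to use convexity again; only the star-shapedness and closedness of $S$ enter the remaining argument.

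By Point~2.a in Proposition~\ref{prop:gauge-top}, the lower semi-continuity of $p_{S}$ is equivalent to the equality ${\inv{p_{S}}([0 , 1]) = \clos{S} \.}$, so it suffices to establish this single set-theoretic identity. First I would recall the two general inclusions available to us: we always have ${S \inc \inv{p_{S}}([0 , 1])}$ by Point~5 in Proposition~\ref{prop:gauge-vect}, and we always have ${\inv{p_{S}}([0 , 1]) \inc \clos{S}}$ by the second equality in Point~1.b of Proposition~\ref{prop:gauge-top}. These hold for any star-shaped $S \.$ without any closedness hypothesis, and they sandwich the level set between $S$ and its closure.

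The closedness of $S$ now closes the sandwich: since ${\clos{S} = S \.}$, the chain ${S \inc \inv{p_{S}}([0 , 1]) \inc \clos{S} = S}$ forces ${\inv{p_{S}}([0 , 1]) = S = \clos{S} \.}$. Applying Point~2.a in Proposition~\ref{prop:gauge-top} one last time then yields that $p_{S}$ is lower semi-continuous. There is essentially no obstacle here---the content has been packaged into the earlier propositions---so the ``hard part'' is merely bookkeeping: making sure to invoke Remark~\ref{rem:cvx+star-shaped+cone} to license the use of the star-shaped machinery, since Proposition~\ref{prop:gauge-top} is stated only for star-shaped sets and not for arbitrary convex ones. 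This is indeed the observation already recorded in Point~3 of Remark~\ref{rem:gauge-top}, so the proposition is an immediate consequence of the preceding development.
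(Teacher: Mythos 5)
Your proof is correct and is essentially the paper's own argument: the paper disposes of Proposition~\ref{prop:gauge-lsc} by citing Point~3 of Remark~\ref{rem:gauge-top}, whose content you reproduce exactly (the sandwich ${S \inc \inv{p_{S}}([0 , 1]) \inc \clos{S} = S}$ followed by Point~2.a of Proposition~\ref{prop:gauge-top}). Your explicit appeal to Point~1 of Remark~\ref{rem:cvx+star-shaped+cone} to justify that $S$ is star-shaped is a useful piece of bookkeeping the paper leaves implicit, but it does not change the route.
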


\bigskip

\begin{proof} 
This is a straightforward consequence of Point~3 in Remark~\ref{rem:gauge-top}. 
\end{proof}

\bigskip

\begin{remark*} 
This result can be found for example in Point~a in \cite[Proposition~3.4.5, page~132]{NicPer18}. 
\end{remark*}

\bigskip

\begin{theorem}[see~{\cite[Chapitre~II, Proposition~21, page~20]{BouEVT81}}] \label{thm:conv-cont} 
   Given a non-empty open convex subset $C \.$ of a topological real vector space $V \!\!$, 
   a convex function $\. f \. : C \to \RR$ is continuous if and only if 
   there exists a non-empty open subset $U \!\.$ of $V \!\.$ which satisfies $U \inc C \.$ 
   and such that $\. f \.$ is bounded from above on $U \!\.$. 
\end{theorem}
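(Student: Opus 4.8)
The plan is to prove the two implications separately, with the forward direction essentially free and the converse carrying all the content. For necessity ($\imp$), I would simply pick any point $a \in C$; continuity of $f$ at $a$ together with the openness of $C$ produces an open neighborhood $U \inc C$ of $a$ on which $|f - f(a)| < 1$, so that $f < f(a) + 1$ on $U$, which is the required non-empty open set with $f$ bounded from above.

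For sufficiency ($\con$) I would first isolate the genuine core as a local statement: \emph{a convex function that is bounded from above on a neighborhood of a point of its (open convex) domain is continuous at that point}. To prove it, translate so that the point is the origin and subtract a constant so that $f(0) = 0$, and assume $f \leq M$ on a neighborhood of $0$. Invoking the standard fact that the origin of a topological real vector space has a balanced neighborhood $W$ (so $tW \inc W$ for $t \in [0,1]$ and $-W = W$), shrink $W$ so that $f \leq M$ on $W$; note $M \geq 0$ since $0 \in W$ and $f(0) = 0$. The heart of the matter is a pair of one-line convex-combination estimates valid for $t \in (0,1)$ and $x = tw$ with $w \in W$: convexity gives $f(x) = f(tw + (1-t)\cdot 0) \leq t f(w) \leq tM$ for the upper bound, while the convex combination $0 = \tfrac{1}{1+t} x + \tfrac{t}{1+t}(-w)$, with $-w \in W$, gives $0 = f(0) \leq \tfrac{1}{1+t} f(x) + \tfrac{t}{1+t} M$ and hence $f(x) \geq -tM$ for the lower bound. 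Thus $|f| \leq tM$ on the neighborhood $tW$ of $0$, and choosing $t$ small beats any prescribed tolerance $\e > 0$, which is continuity at $0$.

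It then remains to propagate boundedness from above across $C$. The hypothesis supplies a non-empty open $U \inc C$ with $f \leq M$ on $U$; fix $a \in U$. For an arbitrary $b \in C$, the openness of $C$ along the line through $a$ and $b$ yields some $\rho > 1$ with $d = a + \rho(b - a) \in C$, so that $b = (1-s)a + s d$ with $s = 1/\rho \in (0,1)$. The set $(1-s)U + s d$ is then an open neighborhood of $b$ (the image of $U$ under an affine homeomorphism), it is contained in $C$ by convexity, and on it $f \leq (1-s)M + s f(d)$, again by convexity. Applying the local statement at $b$ yields continuity of $f$ at $b$; as $b$ was arbitrary, $f$ is continuous on $C$.

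The hard part will be the local estimate, and specifically the idea that the \emph{lower} bound on $f$ near the point is not an extra hypothesis but a consequence of the upper bound via convexity. Making this work cleanly is exactly what forces the choice of a balanced neighborhood $W$: one needs $-w$ and every scaled copy $tW$ to remain inside the region where $f \leq M$ is already known. Once $|f| \leq tM$ on $tW$ is secured, both the point-continuity lemma and the propagation step are routine.
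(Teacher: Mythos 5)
Your proof is correct, and it is essentially the standard argument: the paper offers no proof of its own here but cites Bourbaki (EVT, Chapitre~II, Proposition~21), whose proof follows exactly your decomposition --- first the local lemma that boundedness from above on a balanced neighborhood forces $|f| \leq tM$ on $tW$ (the lower bound coming from the convex combination $0 = \tfrac{1}{1+t}x + \tfrac{t}{1+t}(-w)$), then propagation of the upper bound to every point of $C$ by an affine homeomorphism along a chord extended slightly beyond $b$. All the delicate points (the balanced neighborhood staying inside $C$, the choice $\rho > 1$ so that $s = 1/\rho \in (0,1)$, and $M \geq 0$) are handled correctly, so there is nothing to fix.
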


\bigskip

\begin{corollary}[see~{\cite[Chapitre~II, Corollaire, page~20]{BouEVT81}}] \label{cor:conv-cont-Rn} 
   Given an open convex subset $C \.$ of the canonical topological real vector space $\Rn{n}$, 
   any convex function from $C \.$ to $\RR$ is continuous. 
\end{corollary}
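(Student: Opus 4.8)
The plan is to deduce the corollary directly from Theorem~\ref{thm:conv-cont}, whose hypotheses ask only that the convex function be bounded from above on \emph{some} non-empty open subset of its domain. The entire role of finite dimensionality will be to guarantee that such a set is available.

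First I would treat the case $C = \Oset$ separately, where the assertion is vacuous, and so assume that $C$ is non-empty and fix a point $a \in C$. Since $C$ is open in $\Rn{n}$, there is a real number $\e > 0$ for which the $n + 1$ points $a , a + \e e_{1} , \ldots , a + \e e_{n}$ (where $e_{1} , \ldots , e_{n}$ denotes the standard basis of $\Rn{n}$) all lie in $C$. These points are affinely independent, so their convex hull $\D$ is a non-degenerate simplex with non-empty interior $\intr{\D}$; moreover $\D \inc C$ because $C$ is convex and contains the $n + 1$ vertices.

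The key step is to bound $f$ from above on $\D$. Writing an arbitrary $x \in \D$ as a convex combination of the $n + 1$ vertices and applying the defining convexity inequality of $f$ (see Definition~\ref{def:f-cvx+strict-cvx}) inductively---that is, Jensen's inequality for finitely many points---gives $f \. (x) \leq M$, where $M$ denotes the maximum of the values of $f$ at the vertices. Thus $f$ is bounded from above by $M$ on the non-empty open set $U \as \intr{\D}$, which satisfies $U \inc \D \inc C$.

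It then suffices to invoke Theorem~\ref{thm:conv-cont} with $V \as \Rn{n}$ and this particular $U$ to conclude that $f$ is continuous on $C$. I do not expect a genuine obstacle here: the only slightly delicate point is the inductive step upgrading the two-point convexity inequality of Definition~\ref{def:f-cvx+strict-cvx} to a convex combination of $n + 1$ points, which is a routine induction on the number of terms. It is precisely the finiteness of the number of vertices---available because $\Rn{n}$ is finite dimensional---that furnishes the upper bound, and hence the whole argument; in an infinite-dimensional space no such simplex with finitely many vertices would absorb an open set, and indeed the conclusion itself can fail there.
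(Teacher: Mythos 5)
Your proof is correct, and it is essentially the argument behind the paper's citation: the paper defers to Bourbaki, whose derivation of this corollary from Theorem~\ref{thm:conv-cont} is exactly your simplex construction --- bound the convex function above on a non-degenerate simplex by the maximum of its vertex values via finite Jensen, then apply the theorem on the simplex's interior. No gaps; the finite-dimensionality is used precisely where you say it is.
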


\bigskip

%%%%%%%%%%%%%%%%%%%%

\section{About sub-additive functions} \label{sec:About-sub-additive-functions} 

In this section, we give some relationships between the notions of sub-additivity 
and convexity for positively homogeneous functions. 

\bigskip

Let us begin with the definition of sub-additivity for an arbitrary function 
defined on a subset of a real vector space. 

\bigskip

\begin{definition} 
   Given a subset $S$ of a real vector space which is stable with respect to $+$, 
   a function ${\. f \. : S \to \RR}$ is said to be \emph{sub-additive} if we have 
   ${\. f \. (x + y) \leq \. f \. (x) + \. f \. (y)}$ for any ${x , y \in S \.}$. 
\end{definition}

\bigskip

\begin{remark*} 
For example, we may take a convex cone for $S$ according to Proposition~\ref{prop:cone-+}. 
\end{remark*}

\bigskip

\begin{proposition}[see~{\cite[Theorem~4.7, page~30]{Roc70}}] \label{prop:phs-c} 
   Let $C \.$ be a convex cone in a real vector space 
   and ${\. f \. : C \to \RR}$ a positively homogeneous function. 
   Then we have the equivalence 
   \[
   f \. \ \mbox{is convex} 
   \qquad \iff \qquad \. 
   f \. \ \mbox{is sub-additive}~.
   \] 
\end{proposition}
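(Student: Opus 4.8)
The plan is to exploit the interplay between positive homogeneity of degree one and the two coefficients $t$ and $1-t$ of a convex combination, using that a convex cone is closed under addition. Both implications reduce to moving positive scalar factors in and out of $f$, so the argument is short and purely algebraic once one has checked that every vector on which $f$ is evaluated stays inside $C$.

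For the implication where $f$ is assumed convex, I would first note that by Proposition~\ref{prop:cone-+} the convex cone $C$ is stable under $+$, so $x+y \in C$ for any $x,y \in C$; moreover $\frac{1}{2} x + \frac{1}{2} y \in C$ by convexity. Then I would apply positive homogeneity with the factor $2$ to write $f(x+y) = 2 f\!\left(\frac{1}{2} x + \frac{1}{2} y\right)$, and bound the right-hand side by the convexity inequality at $t = 1/2$, namely $f\!\left(\frac{1}{2} x + \frac{1}{2} y\right) \leq \frac{1}{2} f(x) + \frac{1}{2} f(y)$. Combining the two gives $f(x+y) \leq f(x)+f(y)$, which is exactly sub-additivity.

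For the converse, assuming $f$ sub-additive, I would fix $x,y \in C$ and $t \in (0,1)$ and set $u := (1-t)x$ and $v := t y$. Since $1-t > 0$, $t > 0$, and $C$ is a cone, both $u$ and $v$ lie in $C$, so sub-additivity yields $f(u+v) \leq f(u)+f(v)$. Positive homogeneity then rewrites this as $f\bigl((1-t)x + t y\bigr) \leq (1-t) f(x) + t f(y)$, which is precisely the convexity inequality.

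There is no genuine obstacle here; the only point demanding a little care is that positive homogeneity is asserted only for \emph{strictly positive} scalars, so one must check that all the coefficients that appear ($2$ and $1/2$ in the first direction, and $t$, $1-t$ with $t \in (0,1)$ in the second) are indeed positive, and that the arguments of $f$ remain in $C$. Both facts follow immediately from $C$ being a convex cone, so the verification is routine.
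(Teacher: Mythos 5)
Your proof is correct, and it is genuinely different in character from what the paper does: the paper gives no argument at all for Proposition~\ref{prop:phs-c}, deferring instead to Rockafellar~\cite[Theorem~4.7, page~30]{Roc70}. Your two-line computation is exactly the standard one, and both directions check out: for convexity $\Rightarrow$ sub-additivity, the identity $f(x+y) = 2 f\bigl(\tfrac{1}{2}x + \tfrac{1}{2}y\bigr) \leq f(x) + f(y)$ is valid because $\tfrac{1}{2}x + \tfrac{1}{2}y \in C$ by convexity (note you do not even need Proposition~\ref{prop:cone-+} as an external input here, since $x + y = 2\bigl(\tfrac{1}{2}x + \tfrac{1}{2}y\bigr)$ lies in $C$ by the cone property applied to the midpoint --- this is in fact how that proposition is proved); for the converse, writing $(1-t)x + ty = u + v$ with $u = (1-t)x \in C$, $v = ty \in C$ and applying sub-additivity plus homogeneity gives precisely the convexity inequality for $t \in (0,1)$, which matches the paper's Definition~\ref{def:f-cvx+strict-cvx} (endpoints are excluded there, so no boundary case arises). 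Your attention to the fact that positive homogeneity is only asserted for strictly positive scalars is well placed and correctly handled. One further point in your favour: Rockafellar's theorem is stated for $\Rn{n}$, whereas the proposition is asserted for an arbitrary real vector space; since your argument is purely algebraic, it establishes the result in the generality actually claimed, making it a self-contained and arguably preferable justification.
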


\bigskip

\begin{corollary} \label{cor:gauge-cvx} 
   For any convex subset $S \.$ of $V \!\.$ which contains the origin, 
   its pointed cone ${C \as \Cone{S}}$ and its gauge function $p_{S}$ satisfy the following properties: 
   
   \begin{enumerate}
      \item The set $C \.$ is convex and we have $C = \inv{p_{S}}(\RR)$. 
      
      \smallskip
      
      \item The function $h : C \to \RR$ defined by $h(x) \. \as p_{S}(x)$ is convex. 
   \end{enumerate} 
\end{corollary}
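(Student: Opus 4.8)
The plan is to handle the two assertions in turn, reducing the convexity of $h$ in Point~2 to the classical sub-additivity estimate for Minkowski gauges by invoking Proposition~\ref{prop:phs-c}.

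For Point~1, I would first observe that since $S$ contains the origin it is non-void, so the concluding sentence of Point~1 in Proposition~\ref{prop:gauge-vect} applies verbatim and yields ${C = \Cone{S} = \inv{p_{S}}(\RR)}$. The convexity of $C$ is then exactly the content of Remark~\ref{rem:cvx-cone}: the pointed conic hull of a convex set is convex (which itself rests on Proposition~\ref{prop:sshc} together with Remark~\ref{rem:cvx-op}). This settles Point~1 without any real work.

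For Point~2, the key structural remark is that $C$ is a \emph{convex cone}: it is a cone by construction (being a conic hull) and it is convex by Point~1. Moreover, on ${C = \inv{p_{S}}(\RR)}$ the gauge takes only finite values, so the function $h$ is a genuine real-valued function on $C$, and it is positively homogeneous by Point~3 in Proposition~\ref{prop:gauge-vect}. Hence the hypotheses of Proposition~\ref{prop:phs-c} are met, and that proposition reduces the convexity of $h$ to its sub-additivity. It therefore remains to prove ${p_{S}(x + y) \leq p_{S}(x) + p_{S}(y)}$ for all ${x , y \in C \.}$.

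To get sub-additivity I would fix ${x , y \in C}$ and pick arbitrary real numbers ${\l > p_{S}(x)}$ and ${\m > p_{S}(y) \.}$. Since $S$ is convex and contains the origin, it is star-shaped by Point~1 in Remark~\ref{rem:cvx+star-shaped+cone}; from the definition of the gauge one may choose ${\l' < \l}$ with ${x \in \l' S}$, and then ${\l' S \inc \l S}$ by star-shapedness, so ${x \in \l S}$ and likewise ${y \in \m S \.}$. Convexity of $S$ gives the inclusion ${\l S + \m S \inc (\l + \m) S}$ (each ${\l a + \m b}$ with ${a , b \in S}$ equals ${(\l + \m)}$ times the convex combination ${\frac{\l}{\l + \m} a + \frac{\m}{\l + \m} b \in S}$), whence ${x + y \in (\l + \m) S}$ and ${p_{S}(x + y) \leq \l + \m \.}$. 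Taking the infimum over such $\l$ and $\m$ yields the desired inequality. The argument is essentially routine; the only points needing a little care are verifying that the hypotheses of Proposition~\ref{prop:phs-c} truly hold (that $C$ is a convex cone and that $h$ is \emph{finite-valued} and positively homogeneous on it) and the inclusion ${\l S + \m S \inc (\l + \m) S}$, which is precisely where convexity of $S$ enters.
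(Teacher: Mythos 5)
Your proposal is correct and takes essentially the same route as the paper: Point~1 via Remark~\ref{rem:cvx-cone} and Point~1 of Proposition~\ref{prop:gauge-vect}, and Point~2 by checking that $C$ is a convex cone and $h$ is finite-valued and positively homogeneous (Point~3 of Proposition~\ref{prop:gauge-vect}), then reducing convexity to sub-additivity through Proposition~\ref{prop:phs-c}. The only cosmetic difference is in verifying sub-additivity: you argue directly from the definition of the gauge together with the inclusion ${\l S + \m S \inc (\l + \m) S}$, whereas the paper normalizes ${x \. / \. \l}$ and ${y \. / \. \m}$ into $S$ using the sublevel-set sandwich of Point~5 in Proposition~\ref{prop:gauge-vect} and then applies positive homogeneity --- the same computation, merely rearranged.
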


\bigskip

\begin{proof} \ 

\textsf{Point~1.} 
Since $S$ is convex, the same holds for $C$ according to Remark~\ref{rem:cvx-cone}. 

\smallskip

On the other hand, we have ${C = \inv{p_{S}}(\RR)}$ by Point~1 in Proposition~\ref{prop:gauge-vect}. 

\medskip

\textsf{Point~2.} 
Let us now notice that $h$ is positively homogeneous by Point~3 in Proposition~\ref{prop:gauge-vect}. 

\smallskip

Next, fix ${x , y \in C \.}$, and let $\l$ and $\m$ be positive real numbers 
satisfying ${h(x) < \l}$ and ${h(y) < \m}$. 

\smallskip 

This writes ${h(x \. / \. \l) < 1}$ and ${h(y \. / \. \m) < 1}$, 
which implies that $x \. / \. \l$ and $y \. / \. \m$ are in $S$ 
since we have ${\inv{h}([0 , 1) \. ) = \inv{p_{S}}([0 , 1) \. ) \inc S}$ 
by Point~5 in Proposition~\ref{prop:gauge-vect} 
knowing that $S$ is star-shaped by Point~1 in Remark~\ref{rem:cvx+star-shaped+cone}. 

\smallskip

Therefore, the convexity of $S$ yields 
\[
\frac{x + y}{\l + \m} 
\ = \ 
\Big( \frac{\l}{\l + \m} \Big) \frac{x}{\l} 
\, + \, 
\Big( \frac{\m}{\l + \m} \Big) \frac{y}{\m} \in S~,
\] 
and hence we get 
${h( \. (x + y) \:\!\! / \:\!\! (\l + \m) \. ) 
= p_{S}( \. (x + y) \:\!\! / \:\!\! (\l + \m) \. ) \leq 1}$ 
owing to the obvious inclusion ${S \inc \Cone{S} = C}$ 
and since one has ${S \inc \inv{p_{S}}([0 , 1])}$ by Point~5 in Proposition~\ref{prop:gauge-vect}. 

\smallskip

So, we get ${h(x + y) \leq \l + \m}$, which implies ${h(x + y) \leq h(x) + h(y)}$ 
since $\l$ and $\m$ are arbitrary. 

\smallskip

This proves that $h$ is sub-additive, and hence convex by Proposition~\ref{prop:phs-c}. 
\end{proof}

\bigskip

\begin{remark*} 
It is to be noticed that the function $h$ in Corollary~\ref{cor:gauge-cvx} may be convex 
even though $S$ is not convex. 

\smallskip

Indeed, the subset ${S \as ( \. {- \. 1} , 1)^{2} \cup \{ \. (1 , 1) , (1 , {- \. 1}) \. \} \.}$ 
of $\Rn{2}$ is not convex and we obviously have ${C \as \Cone{S} = \Rn{2}}$. 
On the other hand, we can easily check that the subset ${T \. = [{- \. 1} , 1]^{2} \!}$ of $\Rn{2}$ 
satisfies $\Cone{T} = C = \Rn{2}$ 
and ${h(x , y) \. \as p_{S}(x , y) = p_{\,\. T}(x , y)}$ for any ${(x , y) \in \Rn{2}}$. 
Therefore, since $T$ is star-shapped and convex, 
the function $h$ is convex by Point~2 in Corollary~\ref{cor:gauge-cvx}. 
\end{remark*}

\bigskip

\begin{proposition} \label{prop:cvx-gauge-cont} 
   Let $S \.$ be a non-empty convex subset of a topological real vector space $V \!\.$ 
   and $p_{S}$ the gauge function of $S \.$. Then we have the equivalence 
   \[
   p_{S} \ \mbox{is continuous} 
   \qquad \iff \qquad 
   S \. \ \mbox{is a neighborhood of the origin in} \ V \!\!~.
   \] 
\end{proposition}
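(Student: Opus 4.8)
The plan is to prove the two implications separately, relying on the convexity machinery developed in the previous sections.

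For the implication $(\Leftarrow)$, I would first observe that if $S$ is a neighborhood of the origin then ${0 \in \intr{S} \inc S}$, so that $S$ is star-shaped by Point~1 in Remark~\ref{rem:cvx+star-shaped+cone}, and moreover $S$ is absorbing by Point~1.b in Remark~\ref{rem:0-tvs}, whence ${\Cone{S} = V}$ by Point~4 in Proposition~\ref{prop:abs-cone}. Corollary~\ref{cor:gauge-cvx} then tells me that ${\inv{p_{S}}(\RR) = \Cone{S} = V}$, so that $p_{S}$ is real-valued, and that ${p_{S} : V \to \RR}$ is convex. To conclude continuity I would invoke Theorem~\ref{thm:conv-cont} with ${C \as V}$ (a non-empty open convex subset of itself): taking ${U \as \intr{S}}$, which is non-empty and open, the inclusion ${U \inc S \inc \inv{p_{S}}([0 , 1])}$ coming from Point~5 in Proposition~\ref{prop:gauge-vect} shows that $p_{S}$ is bounded above by $1$ on $U \!$, so that Theorem~\ref{thm:conv-cont} yields the continuity of $p_{S}$.

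For the converse $(\imp)$, suppose $p_{S}$ is continuous. Since $p_{S}$ is non-negative, the set ${A \as \inv{p_{S}}([0 , 1) \. ) = \inv{p_{S}}([{-\infty} , 1) \. )}$ is the preimage of an open subset of $\bar{\RR}$, hence open, and it contains the origin because ${p_{S}(0) = 0}$ by Point~2 in Remark~\ref{rem:0-gauge}. The crux is to prove the inclusion ${\inv{p_{S}}([0 , 1) \. ) \inc S}$, and for this it suffices---in view of Point~5 in Proposition~\ref{prop:gauge-vect}---to show that $S$ is star-shaped, that is (by Point~1 in Remark~\ref{rem:cvx+star-shaped+cone}), that ${0 \in S}$. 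Once this is known, $A$ is an open neighborhood of the origin contained in $S$, and the proof is complete.

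The main obstacle is precisely establishing ${0 \in S}$, since in an arbitrary (not necessarily locally convex) topological real vector space no separation theorem is available. I would argue directly, exploiting the fact that the map ${x \longmapsto {-x}}$ is a homeomorphism of $V \!\!$: the set $-A$ is then also an open neighborhood of the origin, so that ${B \as A \cap ({-A})}$ is an open neighborhood of $0$. Assuming ${V \neq \{ 0 \}}$ (the case ${V = \{ 0 \}}$ being trivial, as then ${S = \{ 0 \}}$), I would pick ${x \in B}$ with ${x \neq 0}$; then ${p_{S}(x) < 1}$ and ${p_{S}({-x}) < 1}$, which provides vectors ${s , s' \in S}$ and scalars ${\l , \m \in (0 , 1)}$ with ${x = \l s}$ and ${{-x} = \m s'}$ (the scalars cannot vanish since ${x \neq 0}$ and ${0 \. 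S = \{ 0 \}}$). The convex combination ${\frac{\l}{\l + \m} s + \frac{\m}{\l + \m} s' = \frac{x}{\l + \m} - \frac{x}{\l + \m} = 0}$ then exhibits $0$ as an element of $S$ by convexity, which is exactly what is needed to finish the argument.
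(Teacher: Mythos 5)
Your proof is correct. The sufficiency direction ($S$ a neighborhood of the origin implies $p_{S}$ continuous) is essentially the paper's own argument: star-shapedness and ${\Cone{S} = V}$ from absorption, real-valuedness and convexity of $p_{S}$ from Corollary~\ref{cor:gauge-cvx}, and continuity from Theorem~\ref{thm:conv-cont} using the bound ${p_{S} \leq 1}$ on $\intr{S}$ (the paper obtains this bound from Point~1.a in Proposition~\ref{prop:gauge-top}, you from Point~5 in Proposition~\ref{prop:gauge-vect}; the difference is immaterial).

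In the necessity direction you genuinely diverge, and your route is the more careful one. The paper settles this direction in one line by invoking Point~2.b in Proposition~\ref{prop:gauge-top}; but that proposition is stated for \emph{star-shaped} $S$, while the present hypothesis is only that $S$ is non-empty and convex, so the citation tacitly presupposes ${0 \in S}$. You spotted exactly this and supplied the missing step: continuity makes ${A \as \inv{p_{S}}([0 , 1) \. )}$ an open neighborhood of the origin, the antipodal homeomorphism makes ${A \cap ({-A})}$ one as well, any nonzero $x$ there is such that $S$ absorbs both $x$ and ${-x}$, and convexity then forces ${0 \in S}$ --- this is precisely the mechanism of Point~1 in Remark~\ref{rem:cvx+star-shaped+cone} (or Point~3 in Proposition~\ref{prop:abs-cone}), which you could cite instead of recomputing the convex combination. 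With star-shapedness in hand, Point~5 in Proposition~\ref{prop:gauge-vect} gives ${A \inc S}$ and the conclusion. Two micro-steps are worth making explicit: the existence of a nonzero ${x \in A \cap ({-A})}$ when ${V \neq \{ 0 \}}$ follows from Point~1.a in Remark~\ref{rem:0-tvs} (a neighborhood of the origin contains a segment ${[{-\e} , \e] x}$ in every direction), and in the trivial case ${V = \{ 0 \}}$ the set ${S = \{ 0 \}}$ is a neighborhood of the origin because it is the whole space. The net effect: your argument proves the statement exactly as phrased, for convex sets not assumed to contain the origin, where the paper's one-liner silently reduces to the star-shaped case; the paper's version buys brevity, yours buys completeness.
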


\smallskip %\bigskip

\begin{proof} \ 

$(\imp)$. 
Assume that $p_{S}$ is continuous. 

\smallskip

Since it is upper semi-continuous, $S$ must be a neighborhood of the origin in $V \!$ 
by Point~2.b in Proposition~\ref{prop:gauge-top}. 

\medskip

$(\con)$. 
Assume that $S$ is a neighborhood of the origin in $V \!\.$. 

\smallskip

Then we have $C \as \Cone{S} = V \!$ by Point~1.b in Remark~\ref{rem:0-tvs}, 
and hence Corollary~\ref{cor:gauge-cvx} implies that 
$p_{S}$ is real-valued and that the function $h : C \to \RR$ defined by $h(x) \. \as p_{S}(x)$ is convex. 

\smallskip

On the other hand, the non-empty open set $\intr{S}$ lies in $\inv{p_{S}}([0 , 1])$ 
by Point~1.a in Proposition~\ref{prop:gauge-top}, which proves that $h$ is bounded from above 
on $\intr{S} \.$. 

\smallskip

Therefore, $h$ is continuous owing to Theorem~\ref{thm:conv-cont}, 
and hence the same holds for ${p_{S} = j \comp h}$, where $j$ denotes the canonical inclusion of $\RR$ 
into $\clos{\RR}$. 
\end{proof}

\bigskip

\begin{remark*} 
It is to be noticed that Proposition~\ref{prop:cvx-gauge-cont} is a generalization to arbitrary 
topological vector spaces of a result given in Point~c in \cite[Proposition~3.4.5, page~132]{NicPer18} 
for normed vector spaces. 
\end{remark*}

\bigskip

%%%%%%%%%%%%%%%%%%%%

\section{About sub-convex functions} \label{sec:About-sub-convex-functions}

In this section, we deal with sub-convex functions, and then define the key notion 
of \emph{strict} sub-convexity, which we shall link up with strict convexity 
for positively homogeneous functions. 

%%%%%%%%%%%%%%%%%%%%

\subsection{Geometric aspects of sub-convexity}

We first introduce (strictly) quasi-convex functions, 
whose class is wider than that of (strictly) convex functions, 
but whose properties are nevertheless very close to (strict) convexity 
(we may refer to~\cite{GrePie71} and~\cite[Part~I, page~3]{HKS05} for an overview). 

\begin{definition} 
   Given a convex subset $C$ of a real vector space, a function ${\. f \. : C \to \RR}$ is said to be 
   
   \begin{enumerate}
      \item \emph{quasi-convex} if we have 
      ${\. f \. ( \. (1 - t) x + t y) \leq \max{\! \{ f \. (x) , f \. (y) \. \}}}$ 
      for any points ${x , y \in C}$ and any number ${t \in (0 , 1)}$. 
      
      \smallskip
      
      \item \emph{strictly quasi-convex} if we have 
      ${\. f \. ( \. (1 - t) x + t y) < \max{\! \{ f \. (x) , f \. (y) \. \}}}$ 
      for any two distinct points ${x , y \in C}$ and any number ${t \in (0 , 1)}$. 
   \end{enumerate} 
\end{definition}

\bigskip

It is to be noticed that both quasi-convexity and strict quasi-convexity of functions 
are mere affine notions. 

\bigskip

\begin{remark} \label{rem:sqc} \ 

\begin{enumerate}[1)]
   \item A strictly quasi-convex function is of course quasi-convex. 
   
   \medskip
   
   \item A convex function is of course quasi-convex, but the converse is obviously not true 
   as we can check with the function ${\. f \. : \RR \to \RR}$ defined by 
   ${\disp \. f \. (x) \. \as \sqrt{|x|}}$. 
   
   \medskip
   
   \item On the other hand, a strictly convex function is strictly quasi-convex, 
   but the converse is clearly false as we can see with the absolute value function on the real line. 
   
   \medskip
   
   \item Given a convex cone $C$ in a real vector space $V \!$ 
   and a positively homogeneous function ${\. f \. : C \to \RR}$ which is a strictly quasi-convex, 
   we have ${\. \inv{f}(0) \inc \{ 0 \}}$. 
   
   \noindent Indeed, any vector $x \neq 0$ in $V \!$ 
   belongs to the open line segment $]x \. / \. 2 \, , \, 2 x[$, which yields 
   
   \smallskip
   
   \centerline{
   $f \. (x) \ < \ \max{\! \{ f \. (x \. / \. 2) \, , \, f \. (2x) \. \}} \. 
   \ = \ \max{\! \{ f \. (x) \. / \. 2 \, , \, 2 f \. (x) \. \}}$
   } 
   
   \smallskip
   
   \noindent owing to the strict quasi-convexity and the positive homogeneity of $\. f \!$, 
   and hence we cannot have $\. f \. (x) = 0$. 
\end{enumerate} 
\end{remark}

\bigskip

\begin{proposition} \label{prop:sqc} 
   Let $C \.$ be a convex subset of a real vector space, 
   ${\. f \. : C \to \RR}$ a non-negative function and ${\a > 0}$ a real number. 
   Then we have the equivalence 
   \[
   f \. \ \mbox{is (strictly) quasi-convex} 
   \qquad \iff \qquad \. 
   f^{\a} \! \ \mbox{is (strictly) quasi-convex}~.
   \] 
\end{proposition}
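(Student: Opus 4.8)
The plan is to exploit the fact that quasi-convexity depends only on the \emph{ordering} of the values taken by $\. f \.$, not on the values themselves, together with the observation that the reparametrisation ${s \longmapsto s^{\a}}$ is an order-isomorphism of $[0 , {+\infty})$ whenever ${\a > 0}$. Since $\. f \.$ is non-negative, all the relevant quantities $\. f \. (x)$, $\. f \. (y)$ and $\. f \. ( \. (1 - t) x + t y)$ lie in $[0 , {+\infty})$, which is exactly the domain on which this power map is well-behaved.

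First I would record the elementary facts about the function ${\f : [0 , {+\infty}) \to [0 , {+\infty})}$ defined by ${\f(s) \. \as s^{\a}}$: since ${\a > 0}$, it is a strictly increasing bijection whose inverse ${s \longmapsto s^{1 \. / \. \a}}$ is also strictly increasing. In particular, for any ${a , b \in [0 , {+\infty})}$ one has the order-preservation equivalences ${a \leq b \iff \f(a) \leq \f(b)}$ and ${a < b \iff \f(a) < \f(b)}$, together with the compatibility with the maximum ${\f(\max{\! \{ a , b \}}) = \max{\! \{ \f(a) , \f(b) \} \.}}$ (a strictly increasing map commutes with the maximum of two arguments).

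Next, fixing ${x , y \in C}$ and ${t \in (0 , 1)}$ and writing ${z \as (1 - t) x + t y}$, I would simply rewrite both sides of the defining inequalities. Applying $\f$ to the value of $\. f \.$ gives ${f^{\a}(z) = \f( \. f \. (z) \. )}$, while the right-hand side transforms as ${\max{\! \{ f^{\a}(x) , f^{\a}(y) \} \.} = \f(\max{\! \{ f \. (x) , f \. (y) \} \.})}$ by compatibility with the maximum. The order-preservation of $\f$ then yields the chain of equivalences
\[
f \. (z) \ \leq \ \max{\! \{ f \. (x) , f \. (y) \} \.}
\quad \iff \quad
\f( \. f \. (z) \. ) \ \leq \ \f(\max{\! \{ f \. (x) , f \. (y) \} \.})
\quad \iff \quad
f^{\a}(z) \ \leq \ \max{\! \{ f^{\a}(x) , f^{\a}(y) \} \.}~,
\]
which is exactly the equivalence between quasi-convexity of $\. f \.$ and of $f^{\a} \!$. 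Replacing $\leq$ by $<$ and restricting to distinct points ${x \neq y}$ gives the strict version verbatim.

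I expect essentially no serious obstacle here: the whole content is that a strictly monotone reparametrisation of the target preserves the (strict) quasi-convexity inequalities. Two points deserve a little care, however. First, non-negativity of $\. f \.$ is genuinely used, since it keeps every argument inside the domain $[0 , {+\infty})$ on which ${s \longmapsto s^{\a}}$ is increasing. Second, it is worth stressing \emph{why} one obtains a full equivalence here, in contrast with the one-directional Proposition~\ref{prop:ac} for genuine convexity: quasi-convexity is preserved under \emph{any} strictly increasing reparametrisation, and both $\f$ and its inverse ${s \longmapsto s^{1 \. / \. \a}}$ are strictly increasing, so the two implications are completely symmetric. (In the convex case the inverse ${s \longmapsto s^{1 \. / \. \a}}$ is concave rather than convex, which is precisely what breaks the reverse implication there.)
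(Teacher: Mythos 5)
Your proof is correct and rests on exactly the same observation as the paper's one-line argument, namely that ${s \longmapsto s^{\a}}$ is (strictly) increasing on ${[0 , {+\infty})}$, hence an order-isomorphism commuting with maxima, so both the plain and strict quasi-convexity inequalities transfer in both directions. Your write-up merely spells out the details (and usefully notes why the analogue for genuine convexity is only one-directional), but the route is the same.
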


\bigskip

\begin{proof} 
This is straightforward since the function ${\f : [0 , {+\infty}) \to \RR}$ 
defined by ${\f(t) \. \as t^{\a} \!}$ is increasing. 
\end{proof}

\bigskip

\begin{proposition} \label{prop:strict-c-strict-qc} 
   Given a convex subset $C \.$ of a real vector space, 
   any non-negative function ${\. f \. : C \to \RR}$ such that $\. f^{\a} \!$ is (strictly) convex 
   for some real number ${\a > 0}$ is (strictly) quasi-convex. 
\end{proposition}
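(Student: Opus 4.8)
The plan is to obtain this statement by chaining together two facts already established in the excerpt, so that no inequality need be verified by hand. First I would record the elementary observation that a (strictly) convex function is automatically (strictly) quasi-convex. This is exactly the content of Points~2 and~3 in Remark~\ref{rem:sqc}: Point~2 gives that a convex function is quasi-convex, and Point~3 gives that a strictly convex function is strictly quasi-convex. Applying this to the function $f^{\a}$---whose (strict) convexity is precisely the hypothesis of the proposition, and which is well-defined and real-valued since $f$ is non-negative and $\a > 0$---I conclude that $f^{\a}$ is (strictly) quasi-convex.

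The second step is to transfer this property from the power $f^{\a}$ back down to $f$ itself. Since $f$ is non-negative and $\a > 0$, Proposition~\ref{prop:sqc} asserts exactly the equivalence between the (strict) quasi-convexity of $f$ and that of $f^{\a}$. Reading that equivalence in the direction ``$f^{\a}$ (strictly) quasi-convex implies $f$ (strictly) quasi-convex'' delivers the desired conclusion. Throughout I would keep the two parenthetical ``strict'' readings aligned: in the non-strict case I invoke Point~2 of Remark~\ref{rem:sqc} together with the non-strict half of Proposition~\ref{prop:sqc}, and in the strict case I invoke Point~3 of Remark~\ref{rem:sqc} together with the strict half.

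I expect no genuine obstacle here, as the statement is an immediate corollary of the two cited results; the only thing to watch is the bookkeeping of the two parallel cases. It is worth emphasizing, however, why the hypothesis is allowed to hold for \emph{any} exponent $\a > 0$, rather than only for $\a \geq 1$ as in the sharper Proposition~\ref{prop:ac}. The reason is that the intermediate notion exploited here is quasi-convexity, which is preserved under post-composition with \emph{any} increasing function $t \mapsto t^{\a}$ on $[0,{+\infty})$, irrespective of whether that function is convex. No lower bound on $\a$ is therefore needed, in contrast with the passage from convexity of $f$ to convexity of $f^{\a}$, which does require $\a \geq 1$.
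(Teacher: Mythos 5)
Your proof is correct and is essentially identical to the paper's own argument: the paper likewise deduces the (strict) quasi-convexity of $f^{\a}$ from Points~2 and~3 in Remark~\ref{rem:sqc} and then transfers it to $f$ via the equivalence in Proposition~\ref{prop:sqc}. Your closing remark on why no lower bound ${\a \geq 1}$ is needed is accurate, though not part of the paper's proof.
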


\bigskip

\begin{proof} 
Assume that $\. f^{\a} \!$ is (strictly) convex for some real number $\a > 0$. 

\smallskip

According to Points~2 and~3 in Remark~\ref{rem:sqc}, 
the function $\. f^{\a} \!$ is (strictly) quasi-convex, 
and hence $\. f \.$ is (strictly) quasi-convex by Proposition~\ref{prop:sqc}. 
\end{proof}

\bigskip

We shall now consider the class of sub-convex functions 
and give its relationships with both convexity and quasi-convexity. 

\bigskip

\begin{definition} 
   Given a subset $C$ of a real vector space, 
   a function $\. f \. : C \to \RR$ is said to be \emph{sub-convex} 
   if the sublevel set $S_{r}(f)$ is convex for any $r \in \RR$. 
   \end{definition}

\bigskip

\begin{remark} \label{rem:sub-cvx} \ 

\begin{enumerate}[1)]
   \item The domain $C$ of such a function $\. f \.$ is actually convex 
   since the non-decreasing family $(S_{r}(f) \. )_{r \in \RR}$ covers $C \.$. 
   Therefore, when dealing with sub-convexity, 
   we will always consider functions defined on convex domains. 
   
   \medskip
   
   \item In case when $C$ is a cone and $\. f \. $ is non-negative and positively homogeneous, 
   then Point~1 in Proposition~\ref{prop:ph-sl} with ${\a \as 1}$ 
   and Point~2 in Remark~\ref{rem:sublevel-basic} with ${r \as 0}$ 
   show that the sub-convexity of $\. f \.$ is equivalent to the convexity of $S_{1 \.}(f)$ 
   since both the image of a convex set by a homothety 
   and the intersection of a family of convex sets are convex sets. 
\end{enumerate} 
\end{remark}

\bigskip

\begin{proposition} \label{prop:sub-cvx-strict-sublevel} 
   Given a convex subset $C \.$ of a real vector space, 
   a function $\. f \. : C \to \RR$ is sub-convex if and only if 
   $\. \inv{f}( \. ({-\infty} , r) \. )$ is convex for any $r \in \RR$. 
\end{proposition}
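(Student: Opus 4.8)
The plan is to reduce both implications to the two elementary stability properties of convex sets already at our disposal, namely that an \emph{upward directed} union of convex sets is convex (Remark~\ref{rem:cvx-op}) and that an \emph{arbitrary} intersection of convex sets is convex. Everything then comes down to rewriting each type of sublevel set in terms of the other, so that the hypothesis can be transported across.

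For the forward implication, I would assume $\. f \.$ sub-convex, so that every $S_{r}(f)$ is convex, and exploit the set-theoretic identity
\[
\inv{f}( \. ({-\infty} , r) \. ) \ = \ \bigcup_{a < r} S_{a}(f)~,
\]
which holds because $\. f \. (x) < r$ exactly when $\. f \. (x) \leq a$ for some $a < r$. The family ${(S_{a}(f) \. )}_{a < r}$ is non-decreasing by Point~1 in Remark~\ref{rem:sublevel-basic}, hence upward directed for the inclusion, so its union is convex by Remark~\ref{rem:cvx-op}. This gives the convexity of $\inv{f}( \. ({-\infty} , r) \. )$ for every $r \in \RR$.

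For the converse, I would assume every strict sublevel set $\inv{f}( \. ({-\infty} , a) \. )$ is convex and invoke the dual identity
\[
S_{r}(f) \ = \ \bigcap_{a > r} \inv{f}( \. ({-\infty} , a) \. )~,
\]
which holds because $\. f \. (x) \leq r$ exactly when $\. f \. (x) < a$ for every $a > r$. As an intersection of convex sets, $S_{r}(f)$ is then convex for every $r \in \RR$, which is precisely the sub-convexity of $\. f \.$ (this last step mirrors the intersection argument already used in Point~2 of Remark~\ref{rem:sub-cvx}).

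There is no genuine obstacle here: the whole content sits in the two displayed identities, and each is a one-line manipulation of the order relation on $\RR$. The single point deserving care is the direction of the inequalities in the two ``exactly when'' clauses, since it is the interchange of $<$ and $\leq$ that makes a directed union the right object in the first case and an intersection the right object in the second; getting these two reversed would spoil the monotonicity needed to apply Remark~\ref{rem:cvx-op}.
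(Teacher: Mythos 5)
Your proof is correct. Note, however, that the paper does not actually prove this proposition itself: it simply cites~\cite[Lemma~1.27, page~32]{HKS05}, where the result is established in the setting of quasi-convex functions, typically via the pointwise inequality $f((1-t)x+ty)\leq\max\{f(x),f(y)\}$. Your argument is a genuinely self-contained alternative, and a purely order-theoretic one: both directions reduce to the two exact identities $\inv{f}(({-\infty},r))=\bigcup_{a<r}S_{a}(f)$ and $S_{r}(f)=\bigcap_{a>r}\inv{f}(({-\infty},a))$, after which you only need that a non-decreasing (hence upward directed) union of convex sets is convex (Remark~\ref{rem:cvx-op}) and that arbitrary intersections of convex sets are convex (as already invoked in Point~2 of Remark~\ref{rem:sub-cvx}). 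Both identities check out ($f(x)<r$ iff $f(x)\leq a$ for some $a<r$; $f(x)\leq r$ iff $f(x)<a$ for all $a>r$), and you correctly flag that the placement of $<$ versus $\leq$ is what makes the union directed in one case and the intersection the right object in the other. Your route also avoids any appeal to the quasi-convexity equivalence (Proposition~\ref{prop:sc-qc}), which appears only later in the paper, so there is no risk of circularity; what the citation buys the paper is brevity, while what your argument buys is a proof using nothing beyond the remarks already stated. One small simplification worth knowing: the forward direction can be done without directedness at all, since for $x,y\in\inv{f}(({-\infty},r))$ one may set $a:=\max\{f(x),f(y)\}<r$ and conclude that the segment $[x,y]$ lies in the convex set $S_{a}(f)\inc\inv{f}(({-\infty},r))$.
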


\bigskip

\begin{proof} 
We refer to~\cite[Lemma~1.27, page~32]{HKS05} for a proof of this result. 
\end{proof}

\bigskip

\begin{proposition} \label{prop:cvx-scvx} 
   For any convex subset $C \.$ of a real vector space and any function $\. f \. : C \to \RR$, 
   we have the following implication: 
   \[
   f \. \ \mbox{is convex} 
   \qquad \imp \qquad \. 
   f \. \ \mbox{is sub-convex}~.
   \] 
\end{proposition}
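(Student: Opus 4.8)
The plan is to verify directly that every sublevel set of $f$ is convex, since this is exactly what sub-convexity demands. So I would fix an arbitrary real number $r$ and prove that $S_{r}(f)$ is a convex subset of $C \.$, then conclude by the arbitrariness of $r$.

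First I would dispose of the degenerate case: if $S_{r}(f) = \Oset \.$, then it is convex by convention and there is nothing to check. Otherwise, I would pick two arbitrary points $x , y \in S_{r}(f)$, which by the definition of the sublevel set means $f \. (x) \leq r$ and $f \. (y) \leq r$, together with an arbitrary parameter $t \in (0 , 1)$. The goal is to show that the convex combination $(1 - t) x + t y$ again lies in $S_{r}(f)$, so that $[x , y] \inc S_{r}(f)$.

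The key step is a single estimate combining the convexity of $f$ (in the sense of Definition~\ref{def:f-cvx+strict-cvx}) with the two hypotheses $f \. (x) \leq r$ and $f \. (y) \leq r$:
\[
f \. ( \. (1 - t) x + t y) \ \leq \ (1 - t) f \. (x) + t f \. (y) \ \leq \ (1 - t) r + t r \ = \ r~.
\]
This gives $(1 - t) x + t y \in S_{r}(f)$ for every $t \in (0 , 1)$, and since the endpoints $x$ and $y$ already belong to $S_{r}(f)$, one obtains $[x , y] \inc S_{r}(f)$. Hence $S_{r}(f)$ is convex, and as $r \in \RR$ was arbitrary, every sublevel set of $f$ is convex, which is precisely the definition of $f$ being sub-convex.

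There is essentially no obstacle here: the statement follows immediately from unwinding the two definitions and applying the defining inequality of a convex function exactly once. The only points requiring any care are correctly matching the definition of $S_{r}(f)$ and treating the empty sublevel set, both of which are routine. I would also note in passing that the implication is strict---the converse fails in general, as the class of sub-convex functions is strictly larger than that of convex functions---but this does not enter the proof of the stated implication.
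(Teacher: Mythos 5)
Your proof is correct and is essentially the paper's own argument in expanded form: the paper's one-line proof (``the preimage of a convex set by a convex function is itself convex'') applied to the interval $({-\infty} , r]$ unwinds to exactly your estimate $f \. ( \. (1 - t) x + t y) \leq (1 - t) f \. (x) + t f \. (y) \leq r$. Your explicit handling of the empty sublevel set and the arbitrariness of $r$ is fine but adds nothing beyond what the paper's compressed statement already covers.
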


\bigskip

\begin{proof} 
Since the preimage of a convex set by a convex function is itself convex, the implication is proved. 
\end{proof}

\bigskip

The converse of the implication in Proposition~\ref{prop:cvx-scvx} is of course not true 
as one can check by considering the function $\. f \. : \RR \to \RR$ defined by 
$\. f \. (t) \. \as 0$ for $t < 0$ and ${\. f \. (t) \. \as {-t}}$ for $t \geq 0$. 

\bigskip

Let us now turn our attention to the relationship between sub-convexity and quasi-convexity. 

\bigskip

\begin{proposition} \label{prop:sc-qc} 
   For any convex subset $C \.$ of a real vector space and any function $\. f \. : C \to \RR$, 
   the following equivalence holds: 
   \[
   f \. \ \mbox{is sub-convex} 
   \qquad \iff \qquad \. 
   f \. \ \mbox{is quasi-convex}~.
   \] 
\end{proposition}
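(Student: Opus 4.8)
The statement to prove is the equivalence: for a function $\. f \. : C \to \RR$ on a convex subset $C$ of a real vector space, $\. f \.$ is sub-convex if and only if $\. f \.$ is quasi-convex. The plan is to prove both implications directly by comparing the definition of quasi-convexity with the convexity of the sublevel sets $S_{r}(f)$, since both notions are phrased in terms of the behaviour of $\. f \.$ on line segments.

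For the forward implication, I would assume $\. f \.$ is sub-convex and prove it is quasi-convex. Fix points $x , y \in C$ and a number $t \in (0 , 1)$, and set $r \as \max{\! \{ f \. (x) , f \. (y) \. \}}$. Then both $x$ and $y$ lie in the sublevel set $S_{r}(f)$, which is convex by hypothesis, so the point $(1 - t) x + t y$ also lies in $S_{r}(f)$. By the definition of the sublevel set this means exactly that $\. f \. ( \. (1 - t) x + t y) \leq r = \max{\! \{ f \. (x) , f \. (y) \. \}}$, which is the defining inequality for quasi-convexity.

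For the reverse implication, I would assume $\. f \.$ is quasi-convex and show every sublevel set $S_{r}(f)$ is convex. Fix $r \in \RR$ and pick $x , y \in S_{r}(f)$, so $\. f \. (x) \leq r$ and $\. f \. (y) \leq r$; this forces $\max{\! \{ f \. (x) , f \. (y) \. \}} \leq r$. For any $t \in (0 , 1)$, quasi-convexity gives $\. f \. ( \. (1 - t) x + t y) \leq \max{\! \{ f \. (x) , f \. (y) \. \}} \leq r$, so $(1 - t) x + t y \in S_{r}(f)$. Since the endpoints of the segment are already in $S_{r}(f)$, the whole segment $[x , y]$ lies in $S_{r}(f)$, proving it convex.

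This result is essentially a reformulation rather than a hard theorem, so I do not anticipate a genuine obstacle; the only point requiring minor care is the translation between the set-membership statement $(1 - t) x + t y \in S_{r}(f)$ and the pointwise inequality on $\. f \.$, which is immediate from the definition of the sublevel set. No auxiliary results from earlier in the paper are needed beyond the definitions of sub-convexity and quasi-convexity themselves.
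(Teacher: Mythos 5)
Your proof is correct, and both directions are argued without gaps: the forward implication correctly exploits the convexity of the single sublevel set $S_{r}(f)$ with $r = \max\{f(x), f(y)\}$, and the reverse implication correctly verifies convexity of each $S_{r}(f)$ segment-wise, including the endpoints. The paper itself does not write out an argument at all --- it simply cites Fenchel's 1953 lecture notes (Point~50, page~69) --- so your self-contained proof supplies exactly what the paper delegates to the literature; the argument you give is the standard one and is what the cited source contains in essence. Nothing beyond the two definitions is needed, as you note, and in particular the convexity of the ambient set $C$ is used only implicitly (and legitimately) to guarantee that $(1-t)x + ty$ lies in the domain of $f$ in the forward direction.
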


\bigskip

\begin{proof} 
We refer to~\cite[Point~50, page~69 (reformatted by Border)]{Fen53} for a proof of this result. 
\end{proof}

\bigskip

\begin{remark} 
This equivalence shows that there is no need to be cautious 
when mixing sub-convexity and quasi-convexity, 
and this is actually what many authors do in the literature (see for example~\cite[Chapter~1]{HKS05}). 
On the contrary, this is no longer the case when dealing with strict quasi-convexity 
and strict sub-convexity as we shall see in the next subsection. 
\end{remark}

\bigskip

The next property gives a way for constructing new sub-convex functions from old ones. 

\bigskip

\begin{proposition} \label{prop:sc-comp} 
   Let $C \.$ be a convex subset of a real vector space, 
   ${\. f \. : C \to \RR}$ a sub-convex function, $D$ a subset of $\RR$ which contains $\. f \. (C)$ 
   and ${\f : D \to \RR}$ a non-decreasing function. 
   Then the function $g : C \to \RR$ defined by $g(x) \. \as \f[f \. (x) \. ]$ is sub-convex. 
\end{proposition}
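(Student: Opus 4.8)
The plan is to prove that $g = \f \comp f$ is sub-convex by showing it is \emph{quasi}-convex and then invoking the equivalence between sub-convexity and quasi-convexity established in Proposition~\ref{prop:sc-qc}. This is the most economical route, since it sidesteps any direct manipulation of the sublevel sets of $g$ and exploits the fact that quasi-convexity, unlike convexity, is stable under post-composition with a non-decreasing function.

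First I would fix two points $x , y \in C$ and a number $t \in (0 , 1)$, and set ${z \as (1 - t) x + t y}$, which lies in $C$ since $C$ is convex. Because $f$ is sub-convex, it is quasi-convex by Proposition~\ref{prop:sc-qc}, so that ${f \. (z) \leq \max{\! \{ f \. (x) , f \. (y) \. \}}}$. The key step is then that applying the non-decreasing function $\f$ both preserves this inequality and commutes with the maximum. Indeed, since ${f \. (x) , f \. (y) \in f \. (C) \inc D}$ and ${\max{\! \{ f \. (x) , f \. (y) \. \}}}$ equals one of $f \. (x) , f \. (y)$, it lies in $D$; the monotonicity of $\f$ then gives both ${\f[ f \. (z) \. ] \leq \f[\max{\! \{ f \. (x) , f \. (y) \. \}} \. ]}$ and ${\f[\max{\! \{ f \. (x) , f \. (y) \. \}} \. ] = \max{\! \{ \f[ f \. (x) \. ] , \f[ f \. (y) \. ] \. \}}}$. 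Chaining these two relations yields ${g(z) \leq \max{\! \{ g(x) , g(y) \. \}}}$, which is exactly quasi-convexity of $g$, and Proposition~\ref{prop:sc-qc} upgrades this to sub-convexity.

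The only point requiring a moment of care---rather than a genuine obstacle---is the identity ${\f[\max{\! \{ a , b \}} \. ] = \max{\! \{ \f(a) , \f(b) \. \}}}$ for a non-decreasing $\f$, together with the verification that both $f \. (z)$ and $\max{\! \{ f \. (x) , f \. (y) \. \}}$ actually lie in the domain $D$ of $\f$. Both facts follow immediately from the hypothesis ${f \. (C) \inc D}$ and the observation that the maximum of two reals is one of them, so $\f$ may legitimately be applied throughout.

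An alternative, more hands-on route works directly with sublevel sets and may be worth recording. Fixing ${r \in \RR}$, one sets ${E_{r} \as \{ s \in D \st \f(s) \leq r \}}$ and notes that the monotonicity of $\f$ makes $E_{r}$ a down-set in $D$; writing ${t_{r} \as \sup{E_{r}} \in [{-\infty} , {+\infty}]}$ and using ${f \. (C) \inc D}$, one checks that $S_{r}(g)$ equals $S_{t_{r}}(f)$ when the supremum is attained in $E_{r}$, and equals ${\inv{f}( \. ({-\infty} , t_{r}) \. )}$ otherwise (with the degenerate cases ${t_{r} = {\pm\infty}}$ producing $\Oset$ or $C$). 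In every case the resulting set is convex---by sub-convexity of $f$ in the former situation and by Proposition~\ref{prop:sub-cvx-strict-sublevel} in the latter---so $g$ is sub-convex. Here the genuine subtlety is precisely the dichotomy according to whether $\sup{E_{r}}$ belongs to $E_{r}$, which is what forces the appeal to the ``strict sublevel set'' characterization of Proposition~\ref{prop:sub-cvx-strict-sublevel}.
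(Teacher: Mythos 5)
Your main argument is correct and is essentially identical to the paper's own proof: reduce sub-convexity to quasi-convexity via Proposition~\ref{prop:sc-qc}, apply the non-decreasing $\f$, which preserves the inequality and commutes with the maximum, then translate back via Proposition~\ref{prop:sc-qc}. Your alternative sublevel-set route is also sound and in fact anticipates the technique the paper uses later for the strict version (Proposition~\ref{prop:ssc-comp}), where the dichotomy on whether the supremum is attained is what forces the extra lower semi-continuity hypothesis on $\f$; here it is simply not needed.
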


\bigskip

\begin{proof} 
For any ${x , y \in C}$ and ${t \in [0 , 1]}$, we have 
${\. f \. ( \. (1 - t) x + t y) \leq \max{\! \{ f \. (x) , f \. (y) \. \}} \.}$ 
since the sub-convexity of $\. f \.$ is equivalent to its quasi-convexity 
by Proposition~\ref{prop:sc-qc}, which yields 

\smallskip

\centerline{
$g( \. (1 - t) x + t y) 
\ \leq \ 
\f(\max{\! \{ f \. (x) , f \. (y) \. \}} \. ) 
\ = \ 
\max{\! \{ g(x) , g(y) \. \}} \.$~.
} 

\smallskip

This proves that $g$ is quasi-convex, and hence sub-convex by Proposition~\ref{prop:sc-qc}. 
\end{proof}

\bigskip

If we pick a real number ${\a > 0}$ and apply Proposition~\ref{prop:sc-comp} 
to the function ${\f : [0 , {+\infty}) \to \RR}$ defined by ${\f(t) \. \as t^{\a} \!}$, 
then we obtain the following result. 

\bigskip

\begin{corollary} \label{cor:asc} 
   Let $C \.$ be a convex subset of a real vector space, 
   ${\. f \. : C \to \RR}$ a non-negative function and ${\a > 0}$ a real number. 
   Then we have the equivalence 
   \[
   f^{\a} \! \ \mbox{is sub-convex} 
   \qquad \iff \qquad \. 
   f \. \ \mbox{is sub-convex}~.
   \] 
\end{corollary}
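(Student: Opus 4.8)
The plan is to prove Corollary~\ref{cor:asc} as a direct application of Proposition~\ref{prop:sc-comp}, which is stated just before it and which I am allowed to use freely. The statement is an equivalence, so I would establish the two implications separately, in each case feeding an appropriate non-decreasing function $\f$ into Proposition~\ref{prop:sc-comp}.

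For the forward direction ($\imp$), suppose $f^{\a}$ is sub-convex. I would apply Proposition~\ref{prop:sc-comp} to the sub-convex function $f^{\a}$ and the function $\f : [0 , {+\infty}) \to \RR$ defined by $\f(t) \as t^{1 \. / \. \a}$. This $\f$ is non-decreasing on $[0 , {+\infty})$ since $1 \. / \. \a > 0$, and its domain contains $f^{\a}(C) \inc [0 , {+\infty})$ because $f$ is non-negative. The composite $\f \comp f^{\a}$ then equals $(f^{\a})^{1 \. / \. \a} = f$ (again using non-negativity of $f$, so that the $\a$-th root recovers $f$ itself), so Proposition~\ref{prop:sc-comp} yields that $f$ is sub-convex. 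For the reverse direction ($\con$), suppose $f$ is sub-convex; I would apply Proposition~\ref{prop:sc-comp} to $f$ together with $\f : [0 , {+\infty}) \to \RR$ defined by $\f(t) \as t^{\a}$, which is non-decreasing and whose domain contains $f(C)$, obtaining that $f^{\a}$ is sub-convex.

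The only point requiring any care is the bookkeeping around non-negativity: both $f$ and $f^{\a}$ take values in $[0 , {+\infty})$, which is exactly what guarantees that $[0 , {+\infty})$ is a legitimate domain containing the relevant image and that the power maps $t \mapsto t^{\a}$ and $t \mapsto t^{1 \. / \. \a}$ are genuinely mutual inverses there. Since the paper has already fixed the convention that $f^{\a}$ means the pointwise $\a$-th power of a non-negative function, there is no ambiguity, and no additional hypothesis (such as $\a \geq 1$) is needed---unlike in Proposition~\ref{prop:ac}, where convexity is not preserved by taking roots, here quasi-convexity (equivalently sub-convexity, by Proposition~\ref{prop:sc-qc}) is preserved under any positive power, which is precisely why the equivalence holds for all $\a > 0$.

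I do not anticipate a genuine obstacle: the whole result is a one-line corollary of the composition principle, and indeed the excerpt signals this by the sentence introducing it (\textquotedblleft If we pick a real number $\a > 0$ and apply Proposition~\ref{prop:sc-comp}\textquotedblright). The main thing to get right is simply to name the correct $\f$ in each of the two directions and to observe that the two choices are inverse to one another on $[0 , {+\infty})$, so that the composites reduce to $f$ and $f^{\a}$ respectively.
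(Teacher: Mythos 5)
Your proposal is correct and is essentially the paper's own argument: the paper obtains the corollary by applying Proposition~\ref{prop:sc-comp} with ${\f(t) \as t^{\a}}$, leaving implicit that the converse follows from the same proposition applied to $f^{\a}$ with the non-decreasing function ${t \mapsto t^{1 \. / \. \a}}$ on ${[0 , {+\infty})}$, exactly as you spell out. Your bookkeeping of the non-negativity hypothesis (ensuring ${[0 , {+\infty})}$ contains the relevant images and that the two power maps are mutually inverse there) is the right point to check and is handled correctly.
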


\bigskip

Let us now improve Proposition~\ref{prop:cvx-scvx} 
for non-negative and positively homogeneous functions. 

\bigskip

\begin{proposition} \label{prop:sc-c} 
   Let $C \.$ be a convex cone in a real vector space 
   and ${\. f \. : C \to \RR}$ a non-negative function which is positively homogeneous. 
   Then we have the equivalence 
   \[
   f \. \ \mbox{is sub-convex} 
   \qquad \iff \qquad \. 
   f \. \ \mbox{is convex}~.
   \] 
\end{proposition}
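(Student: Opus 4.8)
The reverse implication ($\Leftarrow$) is exactly Proposition~\ref{prop:cvx-scvx}, so all the content lies in the forward implication. Since $C$ is a convex cone and $f$ is positively homogeneous, Proposition~\ref{prop:phs-c} tells us that $f$ is convex if and only if it is sub-additive; moreover Proposition~\ref{prop:cone-+} guarantees that $C$ is stable under $+$, so $x + y \in C$ whenever $x , y \in C$ and the sub-additivity inequality makes sense. The plan is therefore to assume $f$ sub-convex and to deduce ${f(x+y) \leq f(x) + f(y)}$ for all ${x , y \in C}$, using only the convexity of the single sublevel set $S_{1}(f)$ (which is immediate from sub-convexity, and in fact equivalent to it by Point~2 in Remark~\ref{rem:sub-cvx}) together with the positive homogeneity of $f$.

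First I would treat the generic case where ${a \as f(x) > 0}$ and ${b \as f(y) > 0}$. Positive homogeneity gives ${f(x/a) = f(y/b) = 1}$, so $x/a$ and $y/b$ lie in $S_{1}(f)$. Forming the convex combination with weights $a/(a+b)$ and $b/(a+b)$ makes the two normalizations collapse exactly:
\[
\frac{a}{a+b} \cdot \frac{x}{a} + \frac{b}{a+b} \cdot \frac{y}{b} \ = \ \frac{x+y}{a+b} \in S_{1}(f),
\]
whence ${f(x+y) \leq a + b = f(x) + f(y)}$ by positive homogeneity once more. This already settles sub-additivity away from the zero level of $f$.

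The delicate point — and the main obstacle — is the degenerate case where $f$ vanishes on a non-zero vector, which is genuinely possible here because $f$ is merely non-negative and positively homogeneous, not point-separating. If ${f(x) = 0}$, then ${f(sx) = 0}$ for every ${s > 0}$, so the whole ray ${(0 , +\infty) x}$ sits inside $S_{1}(f)$, and normalizing by $f(x)$ no longer works. I would handle the case ${f(x) = 0 < f(y) = b}$ by writing, for an arbitrary ${c > b}$, the point $(x+y)/c$ as the convex combination ${t \,(s x) + (1-t)(y/b)}$ with ${t \as (c-b)/c \in (0,1)}$ and ${s \as 1/(c-b) > 0}$; convexity of $S_{1}(f)$ then gives ${f(x+y) \leq c}$, and letting $c \to b^{+}$ yields ${f(x+y) \leq b}$. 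Finally, when ${f(x) = f(y) = 0}$, convexity of $S_{1}(f)$ applied to the points $sx$ and $sy$ gives ${\tfrac{s}{2}(x+y) \in S_{1}(f)}$, i.e. ${\tfrac{s}{2} f(x+y) \leq 1}$ for all ${s > 0}$, which forces ${f(x+y) = 0}$. Collecting the three cases yields sub-additivity, and Proposition~\ref{prop:phs-c} converts this into the desired convexity of $f$.
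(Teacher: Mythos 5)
Your proof is correct, including the delicate degenerate cases, but it takes a genuinely different route from the paper's. Both treat the direction $(\Leftarrow)$ identically via Proposition~\ref{prop:cvx-scvx}; for $(\Rightarrow)$, you prove sub-additivity of $f$ directly from the convexity of the single sublevel set $S_{1}(f)$, splitting into cases according to where $f$ vanishes, and then convert sub-additivity into convexity via Proposition~\ref{prop:phs-c} (using Proposition~\ref{prop:cone-+} to make sense of $x+y \in C$, as you rightly note). The paper instead routes through its gauge-function machinery: it forms the star-shaped hull $S = S_{1}(f) \cup \{0\}$ (convex by Proposition~\ref{prop:sshc}, identified by Point~3 in Proposition~\ref{prop:ph-sl}), identifies $f$ with the restriction to $C$ of the gauge $p_{S}$ via Corollary~\ref{cor:f=p}, and then quotes Corollary~\ref{cor:gauge-cvx}, which says the gauge of a convex set containing the origin is convex on its pointed conic hull. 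The computational heart is the same in both proofs --- the weighted combination $\frac{\lambda}{\lambda+\mu}\,\frac{x}{\lambda} + \frac{\mu}{\lambda+\mu}\,\frac{y}{\mu} = \frac{x+y}{\lambda+\mu}$ --- but in the paper it sits inside the proof of Corollary~\ref{cor:gauge-cvx}, where the degeneracies you handle by hand are absorbed uniformly by taking \emph{strict} majorants $\lambda > p_{S}(x)$, $\mu > p_{S}(y)$ and passing to the infimum. Your approach buys self-containedness: you bypass Corollary~\ref{cor:f=p} and the gauge formalism entirely, needing only Proposition~\ref{prop:phs-c} and Point~2 in Remark~\ref{rem:sub-cvx}; the paper's approach buys reuse, since the same gauge lemmas serve again elsewhere (e.g.\ in Proposition~\ref{prop:cvx-gauge-cont} and Proposition~\ref{prop:strict-sc+0-level+cone-cvx-f-alpha-f-strict-c}). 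One streamlining of your version: the approximation device subsumes all your cases at once --- for arbitrary $\lambda > f(x)$ and $\mu > f(y)$ one has $f(x/\lambda) < 1$ and $f(y/\mu) < 1$, hence $x/\lambda ,\, y/\mu \in S_{1}(f)$ and so $f(x+y) \leq \lambda + \mu$; letting $\lambda \to f(x)^{+}$ and $\mu \to f(y)^{+}$ gives sub-additivity with no case split, your $c \to b^{+}$ and $s \to +\infty$ arguments being special instances of this single limit.
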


\bigskip

\begin{proof}
First of all, we may assume that $C$ is not empty 
since in this case the equivalence to be proved is trivial. 

\smallskip

As we only have to prove the implication $\! \imp \!$ according to Proposition~\ref{prop:cvx-scvx}, 
let us assume that $\. f \.$ is sub-convex. 

\smallskip

So, $S_{1 \.}(f)$ is convex, and hence the set $S \as \widehat{S_{1 \.}(f)}$ is also convex 
by Proposition~\ref{prop:sshc}. 

\smallskip

Moreover, according to Point~3 in Proposition~\ref{prop:ph-sl} with $r \as 1 > 0$, 
we have $S = S_{1 \.}(f) \cup \{ 0 \}$. 

\smallskip

Now, since $S$ is a star-shaped subset of $C \cup \{ 0 \}$ which contains $\. \inv{f}([0 , 1) \. )$, 
we have $\rest{{(p_{S}) \.\.}}{C} = j \comp \. f$ % no \. at the end of the line
by Corollary~\ref{cor:f=p}, where $j$ denotes the canonical inclusion of $\RR$ into $\bar{\RR}$. 

\smallskip

On the other hand, we have $\Cone{S} = \Cone{S_{1 \.}(f) \.} = C \cup \{ 0 \}$ 
by Point~1 in Remark~\ref{rem:cone} and Point~1 in Corollary~\ref{cor:1-sublevel-f-span-C}. 

\smallskip

Finally, since the set $S$ is non-empty, star-shaped and convex, 
this implies that the function $h : C \cup \{ 0 \} \to \RR$ defined by $h(x) \. \as p_{S}(x)$ is convex 
by Corollary~\ref{cor:gauge-cvx}, and hence $\. f \. = \rest{h}{C}$ is convex too. 
\end{proof}

\bigskip

\begin{remark*} \ 

\begin{enumerate}[1)]
   \item According to Point~2 in Remark~\ref{rem:sub-cvx}, it is to be noticed 
   that Proposition~\ref{prop:sc-c} corresponds to the result given 
   in~\cite[Lemma~3.4.2, page~130]{NicPer18}. 
   
   \medskip
   
   \item On the other hand, Proposition~\ref{prop:sc-c} has also been proved 
   in~\cite[Theorem~3, page~208]{Ber63} in the particular case where $V \!$ is equal to $\Rn{n}$ 
   and where $\. f \.$ is positive outside the origin. 
   
   \medskip
   
   \item Proposition~\ref{prop:sc-c} is useful to avoid long computations in differential calculus. 
   For example, if we consider the convex cone 
   ${C \as (0 , {+\infty}) \cart \RR}$ in $\Rn{2}$, the function ${\. f \. : C \to \RR}$ defined by 
   ${\. f \. (x , y) \. \as (x^{2} \! + y^{2}) \:\!\! / \:\!\! (2 x)}$ is convex 
   since it is non-negative, positively homogeneous and sub-convex (indeed, $S_{0}(f)$ is empty 
   and $S_{1 \.}(f)$ is the closed disk in $\Rn{2}$ about $(1 , 0)$ with radius~$1$ less the origin). 
\end{enumerate} 
\end{remark*}

\bigskip

\begin{corollary} \label{cor:hsc-c} 
   For any convex cone $C \.$ in a real vector space 
   and any non-negative function $\. f \. : C \to \RR$ 
   which is positively homogeneous of degree $\a \geq 1$, we have the equivalence 
   \[
   f \. \ \mbox{is sub-convex} 
   \qquad \iff \qquad \. 
   f \. \ \mbox{is convex}~.
   \] 
\end{corollary}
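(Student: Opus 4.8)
The plan is to reduce everything to the degree-one case already settled in Proposition~\ref{prop:sc-c}, combining it with the three ``power tricks'' available to us: the one for sub-convexity (Corollary~\ref{cor:asc}), the one for positive homogeneity (Proposition~\ref{prop:ap}) and the one for convexity (Proposition~\ref{prop:ac}). The implication that $f$ convex forces $f$ sub-convex is immediate and holds for an arbitrary function by Proposition~\ref{prop:cvx-scvx}, so only the reverse implication requires work; and we may of course assume $C \neq \Oset$, the stated equivalence being trivial otherwise.

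For the implication $f$ sub-convex $\imp f$ convex, first I would introduce the function $g \as f^{1 \. / \. \a} : C \to \RR$, which is non-negative since $f$ is, and which, by the equivalence between Point~1 and Point~2 in Proposition~\ref{prop:ap}, is positively homogeneous (of degree one) because $f$ is positively homogeneous of degree $\a$. Assuming that $f$ is sub-convex, I would then apply Corollary~\ref{cor:asc} to the non-negative function $g$ with exponent $\a$; since $g^{\a} = f$, this corollary says precisely that $g$ is sub-convex. Now $g$ is a non-negative, positively homogeneous function on the convex cone $C$, so Proposition~\ref{prop:sc-c} yields that $g$ is in fact convex. Finally, as $g$ is non-negative and $\a \geq 1$, Proposition~\ref{prop:ac} gives that $g^{\a} = f$ is convex, which is what we wanted.

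I do not anticipate a genuine obstacle here: the argument is simply a chain of previously established results, and the only point that demands care is the bookkeeping of hypotheses and of the directions of the various implications. Two places are worth flagging. The passage from $f$ to $g = f^{1 \. / \. \a}$ is forced upon us because Proposition~\ref{prop:sc-c} applies only to homogeneity of degree exactly one; and the very last step is where the assumption $\a \geq 1$ (rather than merely $\a > 0$) is genuinely used, since Proposition~\ref{prop:ac} requires an exponent at least one to preserve convexity under taking powers.
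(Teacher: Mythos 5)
Your proof is correct and takes essentially the same route as the paper's: pass to $g \as f^{1 \. / \. \a}$, deduce its sub-convexity from Corollary~\ref{cor:asc}, its convexity from Proposition~\ref{prop:sc-c}, and recover the convexity of $f = g^{\a}$ via Proposition~\ref{prop:ac}, with the reverse implication given by Proposition~\ref{prop:cvx-scvx}. The only difference is cosmetic: you justify the positive homogeneity of $g$ explicitly through Proposition~\ref{prop:ap}, a step the paper treats as immediate.
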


\bigskip

\begin{proof} 
As we only have to prove the implication $\! \imp \!$ according to Proposition~\ref{prop:cvx-scvx}, 
let us assume that $\. f \.$ is sub-convex. 

\smallskip

Therefore, the function $\. f^{1 \. / \. \a} \!$ is sub-convex by Corollary~\ref{cor:asc}, 
and hence Proposition~\ref{prop:sc-c} implies that it is convex since it is positively homogeneous. 

\smallskip

Finally, using Proposition~\ref{prop:ac}, 
we get that $\. f \. = (f^{1 \. / \. \a})^{\. \a} \!$ is convex. 
\end{proof}

\bigskip

\begin{remark} 
It is to be noticed that the implication $\! \imp \!$ is no longer true if we have ${\a < 1}$ 
since the function ${\. f \. : \RR \to \RR}$ defined by 
${\disp \. f \. (t) \. \as \sqrt{|t|}}$ is not convex 
even though it is sub-convex and positively homogeneous of degree $1 \. / \. 2$. 
\end{remark}

%%%%%%%%%%%%%%%%%%%%

\subsection{Topological aspects of sub-convexity}

We shall now deal with the topological notion of strict sub-convexity for functions 
defined on a general topological real vector space, and show how it is related to continuity 
and strict convexity when these functions have the extra property of being positively homogeneous. 

\bigskip

\begin{definition} \label{def:strict-sub-cvx} 
   Given a subset $C$ of a topological real vector space, 
   a function $\. f \. : C \to \RR$ is said to be \emph{strictly sub-convex} 
   if the sublevel set $S_{r}(f)$ is strictly convex for any $r \in \RR$. 
\end{definition}

\bigskip

\begin{remark} \label{rem:strict-sub-cvx} \ 

\begin{enumerate}[1)]
   \item According to Point~2 in Remark~\ref{rem:strict-cvx}, 
   any strictly sub-convex function $\. f \. : C \to \RR$ defined on a subset $C$ 
   of a topological real vector space is sub-convex, 
   and hence its domain $C$ is necessarily convex by Point~1 in Remark~\ref{rem:sub-cvx}. 
   
   \medskip
   
   \item A function defined on an interval of $\RR$ is of course strictly sub-convex 
   if and only if it is sub-convex. 
   
   \medskip
   
   \item A norm $\norm{\cdot}$ on a real vector space $V \!$ is strictly sub-convex 
   with respect to the topology associated with $\norm{\cdot}$ if and only if 
   the normed vector space $(V , \norm{\cdot})$ 
   is ``strictly convex'' in the sense given in~\cite[page~108]{Car04} and \cite[page~30]{JohLin01}. 
   This is a mere consequence of the equivalence Point~1$\iff$Point~4 
   in Proposition~\ref{prop:Minkowski-strict-sub-additivity} 
   since in $(V , \norm{\cdot})$ the topological boundary of the unit closed ball 
   is exactly the unit sphere. 
   
   \medskip
   
   \item It is to be noticed that Proposition~\ref{prop:sub-cvx-strict-sublevel} does not hold 
   when sub-convexity is replaced by strict sub-convexity. 
   In other words, the strict sub-convexity of a function ${\. f \. : C \to \RR}$ 
   defined on a subset $C$ of a topological real vector space cannot be characterized by saying 
   that $\. \inv{f}( \. ({-\infty} , r) \. )$ is strictly convex for any ${r \in \RR}$. 
   
   \smallskip
   
   \noindent \textasteriskcentered \ 
   Indeed, if we consider the set ${C \as \Rn{2}}$ 
   and the function $\. f \. : C \to \RR$ defined by 
   
   \smallskip
   
   \centerline{
   $\disp f \. (x , y) \. \as \. \max{\! \{ 0 \, , \, \sqrt{2 (x^{2} \. + y^{2})} - 2 y \}} \.$~,
   } 
   
   \smallskip
   
   \noindent the sublevel set ${S_{0}(f) = \{ (x , y) \in \Rn{2} \st y \geq |x| \}}$ 
   is not strictly convex, which shows that $\. f \.$ is not strictly sub-convex. 
   
   \noindent Nevertheless, for any ${r \in \RR}$, we have 
   
   \smallskip
   
   \centerline{
   $\inv{f}( \. ({-\infty} , r) \. ) 
   \ = \ 
   \{ (x , y) \in \Rn{2} \st (y + r)^{\. 2} \! - x^{2} \. > r^{2} \!\. / \. 2 \} 
   \cap [\RR \cart \. ({-r} \. / \. 2 , {+\infty}) \. ]$
   } 
   
   \smallskip
   
   \noindent if one has ${r > 0}$ 
   and ${\. \inv{f}( \. ({-\infty} , r) \. ) = \Oset}$ if one has ${r \leq 0}$, 
   which shows that $\. \inv{f}( \. ({-\infty} , r) \. )$ is strictly convex. 
   
   \smallskip
   
   \noindent \textasteriskcentered \ 
   On the other hand, if we consider the set ${C \as \Rn{2}}$ 
   and the function ${\. f \. : C \to \RR}$ defined by 
   
   \smallskip
   
   \centerline{
   $\disp f \. (x , y) \. \as \. \min{\! \{ 0 \, , \, \sqrt{2 (x^{2} \. + y^{2})} - 2 y \}} \.$~,
   } 
   
   \smallskip
   
   \noindent we get ${S_{r}(f) = \Rn{2}}$ for any ${r \in [0 , {+\infty})}$ and 
   
   \smallskip
   
   \centerline{
   $S_{r}(f) 
   \ = \ 
   \{ (x , y) \in \Rn{2} \st (y + r)^{\. 2} \! - x^{2} \. \geq r^{2} \!\. / \. 2 \} 
   \cap [\RR \cart \. [{-r} \. / \. 2 , {+\infty}) \. ]$
   } 
   
   \smallskip
   
   \noindent for any ${r \in ({-\infty} , 0)}$, which shows that $\. f \.$ is strictly sub-convex. 
   
   \noindent Nevertheless, the set 
   ${\. \inv{f}( \. ({-\infty} , 0) \. ) = \{ (x , y) \in \Rn{2} \st y > |x| \}}$ 
   is not strictly convex. 
\end{enumerate} 
\end{remark}

\bigskip

Let us now give the relationship between strict sub-convexity and strict quasi-convexity. 

\bigskip

\begin{proposition} \label{prop:sqc+c-ssc} 
   For any strictly convex subset $C \.$ of a topological real vector space $V \!\.$ 
   and any continuous function $\. f \. : C \to \RR$, we have the following implication: 
   \[
   f \. \ \mbox{is strictly quasi-convex} 
   \qquad \imp \qquad \. 
   f \. \ \mbox{is strictly sub-convex}~.
   \] 
\end{proposition}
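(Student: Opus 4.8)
The plan is to fix an arbitrary real number $r$, set $A \as S_{r}(f)$, and show that $A$ is strictly convex; as $r$ is arbitrary, this yields the strict sub-convexity of $f$. First I would observe that $f$ is quasi-convex (Point~1 in Remark~\ref{rem:sqc}), hence sub-convex by Proposition~\ref{prop:sc-qc}, so that $A$ is convex. If $A$ has at most one point, it is (vacuously) strictly convex, so I may assume it contains two distinct points $x \neq y$. Strict quasi-convexity along the segment gives $]x , y[ \inc \inv{f}(({-\infty} , r))$, while $x , y \in C \inc \rc{C}$ and the strict convexity of $C$ give $]x , y[ \inc \ri{C}$; hence the set $\inv{f}(({-\infty} , r)) \cap \ri{C}$ is non-empty. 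Since $f$ is continuous, this set is open in $\Aff{C}$ and contained in $A$, which forces $\Aff{A} = \Aff{C}$ and, being an open subset of $\Aff{A}$ lying in $A$, also gives $\inv{f}(({-\infty} , r)) \cap \ri{C} \inc \ri{A}$ (in particular $\ri{A} \neq \Oset$, as also predicted by Proposition~\ref{prop:ri-strict-cvx}).

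Next I would record the identity $\rc{A} \cap C = A$. Because $f$ is continuous, $A = \inv{f}(({-\infty} , r])$ is closed in $C$ for the subspace topology; since $\Aff{A} = \Aff{C}$, the relative closure $\rc{A}$ is exactly the closure of $A$ inside $\Aff{C}$, so intersecting it with $C$ returns the closure of $A$ in $C$, which is $A$ itself.

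I would then verify the definition of strict convexity for $A$. Let $a \neq b$ be points of $\rc{A}$. By Point~1 in Proposition~\ref{prop:ri-rc-rb} one has $\rc{A} \inc \rc{C}$, so the strict convexity of $C$ gives $]a , b[ \inc \ri{C} \inc C$; moreover $\rc{A}$ is convex (it is the closure of the convex set $A$ taken inside $\Aff{A}$, by Point~1 in Proposition~\ref{prop:conv-top}), whence $]a , b[ \inc \rc{A}$. Combining these with the identity above yields $]a , b[ \inc \rc{A} \cap C = A$. Now, given any $c \in {]a , b[}$, I can pick two distinct points $c_{1} , c_{2} \in {]a , b[} \inc A$ with $c \in {]c_{1} , c_{2}[}$, and strict quasi-convexity gives $f(c) < \max{\! \{ f(c_{1}) , f(c_{2}) \}} \leq r$; thus $c \in \inv{f}(({-\infty} , r)) \cap \ri{C} \inc \ri{A}$. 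Therefore $]a , b[ \inc \ri{A}$, which is precisely the strict convexity of $A$.

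The recurring delicate point is the bookkeeping of relative interiors and closures: the whole argument only runs once one knows that $\Aff{A} = \Aff{C}$, so that $\ri{A}$, $\rc{A}$ and the relative notions attached to $C$ all live in the same affine space. Establishing this equality from continuity and strict quasi-convexity, together with the companion identity $\rc{A} \cap C = A$, is the step I expect to demand the most care; everything else is a direct application of strict quasi-convexity and of the elementary behaviour of open segments between boundary points.
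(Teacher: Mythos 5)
Your proposal is correct and takes essentially the same route as the paper's proof: quasi-convexity gives convex sublevel sets, the strict convexity of $C$ pushes open segments between points of the relative closure into $\ri{C}$, the straddling-points trick ($c \in {]c_{1} , c_{2}[}$ with $c_{1} , c_{2} \in S_{r}(f)$) converts strict quasi-convexity into membership in $\inv{f}( \. ({-\infty} , r) \. )$, and the two semi-continuities of $f$ supply the openness/closedness bookkeeping needed to land in $\ri{S_{r}(f) \.}$. The one organizational divergence is that you establish $\Aff{S_{r}(f) \.} = \Aff{C}$ outright (legitimately, since a non-empty open subset of the affine subspace $\Aff{C}$ is a neighborhood of each of its points there, hence spans it by the absorbing argument of Remark~\ref{rem:0-tvs}), whereas the paper never compares the two affine hulls and instead intersects its open set with $\Aff{S_{r}(f) \.}$ directly---both versions are sound.
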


\bigskip

\begin{proof} 
Assume that $\. f \.$ is strictly quasi-convex, fix a number ${r \in \RR}$, 
and consider two points $x \neq y$ in $\rc{S_{r}(f) \.} \inc \rc{C}$ 
(see Point~1 in Proposition~\ref{prop:ri-rc-rb}). 

\smallskip

Then we have ${{]x , y[} \inc \ri{C}}$ since $C$ is strictly convex. 

\smallskip

Moreover, the quasi-convexity of $\. f \.$ insures that $S_{r}(f)$ is convex 
by Proposition~\ref{prop:sc-qc}, which implies that $\rc{S_{r}(f) \.}$ is also convex 
owing to Point~1 in Proposition~\ref{prop:conv-top}, 
and hence we obtain the inclusion ${]x , y[} \inc \rc{S_{r}(f) \.}$. 

\smallskip

On the other hand, we compute 
%%%%%%%%%%
\begin{eqnarray*} 
   \rc{S_{r}(f) \.} \cap \ri{C} \! 
   & = & \! 
   \clos{S_{r}(f)} \cap \Aff{S_{r}(f) \.} \cap \ri{C} \\ 
   & = & \! 
   \clos{S_{r}(f)} \cap C \cap \Aff{S_{r}(f) \.} \cap \ri{C} \\ 
   & & \quad 
   \mbox{(since one has $\ri{C} \inc C$)} \\ 
   & = & \! 
   S_{r}(f) \cap \Aff{S_{r}(f) \.} \cap \ri{C} \\ 
   & & \quad 
   \mbox{(since $S_{r}(f)$ is closed in $C$ by lower semi-continuity of $\. f$)} \\ 
   & = & \! 
   S_{r}(f) \cap \ri{C} \\ 
   & & \quad 
   \mbox{(since the inclusion $S_{r}(f) \inc \Aff{S_{r}(f) \.}$ holds)}~, 
\end{eqnarray*} 
%%%%%%%%%%
which proves ${]x , y[} \inc S_{r}(f) \cap \ri{C \.}$. 

\smallskip

Now, given an arbitrary ${\a \in (0 , 1)}$, fix ${s \in (0 , \a)}$ and ${t \in (\a , 1)}$, 
and define ${x' \! \as (1 - s) x + s y}$ and ${y' \! \as (1 - t) x + t y}$. 

\smallskip

Then the distinct points $x' \!$ and $y' \!$ are in ${]x , y[}$, 
and hence in $S_{r}(f) \cap \ri{C}$ as shown above. 

\smallskip

Moreover, the point ${z \as (1 - \a) x + \a y}$ belongs to the open line segment $]x' \! , y'[$, 
and hence the strict quasi-convexity of $\. f \.$ yields 
${\. f \. (z) < \max{\! \{ f \. (x') , f \. (y') \. \}} \leq r}$, 
which implies ${z \in \:\!\! \inv{f}( \. ({-\infty} , r) \. )}$. 

\smallskip

Finally, since $\ri{C}$ is open in $\Aff{C}$ and since $\. \inv{f}( \. ({-\infty} , r) \. )$ 
is open in $C$ by the upper semi-continuity of $\. f \!$, 
there exist open sets $U \!$ and $W \!$ in $V \!$ such that one can write 
${\ri{C} = U \. \cap \Aff{C}}$ and ${\. \inv{f}( \. ({-\infty} , r) \. ) = W \! \cap C \.}$, 
which yields 
%%%%%%%%%%
\begin{eqnarray*} 
   z \in \:\!\! \inv{f}( \. ({-\infty} , r) \. ) \cap {]x , y[} \! 
   & \inc & \! 
   \inv{f}( \. ({-\infty} , r) \. ) \cap \ri{C} \\ 
   & & \quad 
   \mbox{(since we proved ${]x , y[} \inc \ri{C}$ above)} \\ 
   & \inc & \! 
   \inv{f}( \. ({-\infty} , r) \. ) \cap \Aff{S_{r}(f) \.} \cap \ri{C} \\ 
   & & \quad 
   \mbox{(since we have 
   $\. \inv{f}( \. ({-\infty} , r) \. ) \inc S_{r}(f) \inc \Aff{S_{r}(f) \.} \.$)} \\ 
   & = & \! 
   (W \! \cap C) \cap \Aff{S_{r}(f) \.} \cap \ri{C} 
   \ = \ 
   W \! \cap \Aff{S_{r}(f) \.} \cap \ri{C} \\ 
   & & \quad 
   \mbox{(since the inclusion $\ri{C} \inc C$ holds)} \\ 
   & = & \! 
   W \! \cap \Aff{S_{r}(f) \.} \cap (U \. \cap \Aff{C}) \\ 
   & = & \! 
   \Om \as (U \. \cap W) \cap \Aff{S_{r}(f) \.} \\ 
   & & \quad 
   \mbox{(since we have $\Aff{S_{r}(f) \.} \inc \Aff{C}$ from $S_{r}(f) \inc C$)}~. 
\end{eqnarray*} 
%%%%%%%%%%

\smallskip

But, using again the second line of the above computations, we have 

\smallskip

\centerline{
$\Om \ = \ \:\!\! \inv{f}( \. ({-\infty} , r) \. ) \cap \Aff{S_{r}(f) \.} \cap \ri{C} 
\ \inc \ 
\inv{f}( \. ({-\infty} , r) \. ) \ \inc \ S_{r}(f)$~,
} 

\smallskip

and this yields the inclusion $\Om \inc \ri{S_{r}(f) \.}$ 
since $\Om$ is an open set in $\Aff{S_{r}(f) \.}$, which finally leads to $z \in \ri{S_{r}(f) \.}$. 

\smallskip

Conclusion: since the points $x \neq y$ in $\rc{S_{r}(f) \.}$ and $\a \in (0 , 1)$ 
have been chosen arbitrarily, the sublevel set $S_{r}(f)$ is therefore strictly convex. 

\smallskip

This proves that $\. f \.$ is strictly sub-convex since $r \in \RR$ is arbitrary. 
\end{proof}

%\bigskip
\pagebreak

\begin{remark} \label{rem:sqc+c-ssc} \ 

\begin{enumerate}[1)]
   \item Let us notice that Proposition~\ref{prop:sqc+c-ssc} does not hold 
   if $C$ is not strictly convex as we can see with the non-strictly convex subset 
   ${C \as [{- \. 1} , 1] \cart [{- \. 1} , 1]}$ of $\Rn{2}$ 
   and the strictly convex function ${\. f \. : C \to \RR}$ 
   defined by ${\. f \. (x , y) \. \as x^{2} \! + y^{2} \!}$ for in this case $\. f \.$ is continuous 
   and strictly quasi-convex (see Point~3 in Remark~\ref{rem:sqc}) 
   but the sublevel set ${S_{2}(f) = C}$ is not strictly convex. 
   
   \medskip
   
   \item Moreover, Proposition~\ref{prop:sqc+c-ssc} is false if $\. f \.$ is not continuous. 
   
   \noindent Indeed, let us consider the convex subset 
   ${\disp C \as \{ u \in \Rn{2} \st \norm{u} \leq \sqrt{2} \}}$ of $\Rn{2}$, 
   where $\norm{\cdot}$ stands for the canonical Euclidean norm on $\Rn{2}$, 
   and the function ${\. f \. : C \to \RR}$ defined by 
   ${\. f \. (u) \. \as \norm{u}^{\. 2}}$ % no \! at the end of the line
   for ${u \in [{- \. 1} , 1] \cart [{- \. 1} , 1]}$ 
   and ${\. f \. (u) \. \as \norm{u}^{\. 2} \! + 2}$ 
   for ${u \in C \setmin ([{- \. 1} , 1] \cart [{- \. 1} , 1])}$, 
   which is obviously not upper semi-continuous. 
   
   \noindent Now, given two points $v$ and $w$ in $C \.$, there are three cases to be considered. 
   
   \smallskip
   
   \noindent \textasteriskcentered \ 
   If $v$ and $w$ both belong to ${[{- \. 1} , 1] \cart [{- \. 1} , 1]}$ 
   or to one of the four connected components of ${C \setmin ([{- \. 1} , 1] \cart [{- \. 1} , 1])}$, 
   we have ${\. f \. ( \. (1 - t) v + t w) < \max{\! \{ f \. (v) , f \. (w) \. \}}}$ 
   for all ${t \in (0 , 1)}$ since $\. f \.$ is strictly convex 
   on each of these five convex sets---and hence strictly quasi-convex 
   owing to Point~3 in Remark~\ref{rem:sqc}. 
   
   \smallskip
   
   \noindent \textasteriskcentered \ 
   If $v$ is in ${[{- \. 1} , 1] \cart [{- \. 1} , 1]}$ 
   and $w$ belongs to one of the four connected components 
   of ${C \setmin ([{- \. 1} , 1] \cart [{- \. 1} , 1])}$, 
   we have ${\. f \. ( \. (1 - t) v + t w) \leq 2 < f \. (w)}$ for all ${t \in [0 , 1)}$, 
   and this yields ${\. f \. ( \. (1 - t) v + t w) < \max{\! \{ f \. (v) , f \. (w) \. \}}}$ 
   since the first inequality with $t \as 0$ writes $f \. (v) \leq f \. (w)$. 
   
   \smallskip
   
   \noindent \textasteriskcentered \ 
   If $v$ and $w$ belong to different connected components 
   of ${C \setmin ([{- \. 1} , 1] \cart [{- \. 1} , 1])}$, 
   we have ${\. f \. (u) \leq \norm{u}^{\. 2} \! + 2 < \max{\! \{ f \. (v) , f \. (w) \. \}}}$ 
   for all ${u \in {]v , w[}}$ since the second inequality is given by the strict quasi-convexity 
   of the strictly convex function ${\norm{\cdot}^{\. 2} \! + 2}$ 
   (see Point~3 in Remark~\ref{rem:sqc}). 
   
   \smallskip
   
   \noindent Summing up, this proves that $\. f \.$ is strictly quasi-convex. 
   
   \smallskip
   
   \noindent Nevertheless, the function $\. f \.$ is not strictly sub-convex 
   since the sublevel set $S_{2}(f)$ is not strictly convex. 
   
   \noindent It should also be noticed that the function $\. f \.$ is lower semi-continuous 
   (indeed, given any real number $\a$, the set $\{ u \in C \st f \. (u) \leq \a \}$ 
   is obviously closed in $C$). 
   
   \medskip
   
   \item On the other hand, the converse implication $\! \con \!$ in Proposition~\ref{prop:sqc+c-ssc} 
   is not true---even though the function is continuous---as we can see with the zero function 
   defined on the real line. 
   
   \medskip
   
   \item Replacing ``strictly sub-convex'' by ``strictly convex'' 
   in Proposition~\ref{prop:sqc+c-ssc} is not possible 
   as one can check with the absolute value function defined on the real line. 
\end{enumerate} 
\end{remark}

\bigskip

Owing to the first part of Point~3 in Remark~\ref{rem:sqc} 
and Proposition~\ref{prop:sqc+c-ssc}, we get the next result. 

\bigskip

\begin{corollary} \label{cor:sc+c-ssc} 
   For any strictly convex subset $C \.$ of a topological real vector space 
   and any continuous function $\. f \. : C \to \RR$, we have the following implication: 
   \[
   f \. \ \mbox{is strictly convex} 
   \qquad \imp \qquad \. 
   f \. \ \mbox{is strictly sub-convex}~.
   \] 
\end{corollary}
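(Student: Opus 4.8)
The plan is to obtain this implication with essentially no new work, as a direct composition of two facts already established in the excerpt. The point is that strict convexity of $\. f \.$ immediately supplies the hypothesis of Proposition~\ref{prop:sqc+c-ssc}, whose conclusion is precisely the desired strict sub-convexity. So the whole argument is a two-link chain: \emph{strictly convex} $\imp$ \emph{strictly quasi-convex} $\imp$ \emph{strictly sub-convex}.

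First I would assume that $\. f \.$ is strictly convex and recall that a strictly convex function is always strictly quasi-convex. This is the first part of Point~3 in Remark~\ref{rem:sqc}, and it is immediate from the definitions, since for any distinct $x , y \in C$ and any $t \in (0 , 1)$ one has the chain $\. f \. ( \. (1 - t) x + t y) < (1 - t) f \. (x) + t f \. (y) \leq \max{\! \{ f \. (x) , f \. (y) \. \}}$, where the strict inequality is strict convexity and the final inequality is the elementary bound of a convex combination by the maximum. Hence $\. f \.$ is strictly quasi-convex.

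Next I would invoke Proposition~\ref{prop:sqc+c-ssc}. By the hypotheses of the corollary, $C$ is a strictly convex subset of a topological real vector space and $\. f \. : C \to \RR$ is continuous, which are exactly the two standing assumptions of that proposition. Applying it to the strictly quasi-convex function $\. f \.$ then yields that $\. f \.$ is strictly sub-convex, which is the claim. I do not expect any genuine obstacle here, since all the substance has been relegated to Proposition~\ref{prop:sqc+c-ssc}; the only point worth keeping in mind is that both its hypotheses---continuity of $\. f \.$ and strict convexity of the domain---are genuinely needed, as the counterexamples in Points~1 and~2 of Remark~\ref{rem:sqc+c-ssc} show, but both are already guaranteed by the corollary's assumptions, so the chaining is immediate.
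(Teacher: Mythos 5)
Your proposal is correct and is exactly the paper's own argument: the corollary is deduced by combining the first part of Point~3 in Remark~\ref{rem:sqc} (strictly convex $\imp$ strictly quasi-convex) with Proposition~\ref{prop:sqc+c-ssc}. Nothing is missing, and your remark that both hypotheses are genuinely needed matches the counterexamples the paper records in Remark~\ref{rem:sc+c-ssc}.
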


\bigskip

\begin{remark} \label{rem:sc+c-ssc} \ 

\begin{enumerate}[1)]
   \item Point~1 in Remark~\ref{rem:sqc+c-ssc} shows that we cannot drop the strict convexity of $C$ 
   in the hypotheses of Corollary~\ref{cor:sc+c-ssc}. 
   
   \medskip
   
   \item Moreover, Corollary~\ref{cor:sc+c-ssc} is false if $\. f \.$ is not continuous as we can see 
   by considering the strictly convex subset ${C \as \RR}$ of $\RR$ 
   and the strictly convex function ${\. f \. : C \to \RR}$ defined by 
   ${\. f \. (x) \. \as x^{2} \!}$ since in this case $\. f \.$ is not continuous 
   and the sublevel set ${S_{1 \.}(f) = [{- \. 1} , 1]}$ is not strictly convex. 
   
   \medskip
   
   \item The converse of Corollary~\ref{cor:sc+c-ssc} is not true since otherwise 
   this would lead to a contradiction by combining Proposition~\ref{prop:sqc+c-ssc} 
   and the second part of Point~3 of Remark~\ref{rem:sqc}. 
\end{enumerate} 
\end{remark}

\bigskip

The next property gives a way for constructing new strictly sub-convex functions from old ones. 

\medskip %\bigskip

\begin{proposition} \label{prop:ssc-comp} 
   Let $C \.$ be a strictly convex subset of a topological real vector space, 
   ${\. f \. : C \to \RR}$ a strictly sub-convex function, $D$ a subset of $\RR$ 
   which contains $\. f \. (C)$ and ${\f : D \to \RR}$ a non-decreasing function 
   which is lower semi-continuous. 
   Then the function $g : C \to \RR$ defined by $g(x) \. \as \f[f \. (x) \. ]$ is strictly sub-convex. 
\end{proposition}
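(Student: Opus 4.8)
The plan is to fix an arbitrary real number $r$ and show that the sublevel set $S_{r}(g)$ is strictly convex, which by Definition~\ref{def:strict-sub-cvx} is exactly what must be proved. Since $g = \f \comp f$ and $f(C) \inc D$, one has $S_{r}(g) = \{ x \in C \st f(x) \in A \}$, where $A \as \{ t \in D \st \f(t) \leq r \}$ is the sublevel set of $\f$ at level $r$. First I would record two structural features of $A$. Because $\f$ is non-decreasing, $A$ is a \emph{down-set} in $D$: if $t \in A$, $s \in D$ and $s \leq t$, then $\f(s) \leq \f(t) \leq r$, so $s \in A$. And because $\f$ is lower semi-continuous, $A$ is closed in $D$. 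I then set $\b \as \sup A \in [{-\infty} , {+\infty}]$.

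Next I would dispose of the two extreme cases. If $A = \Oset$, then $S_{r}(g) = \Oset$, which is strictly convex. If $\b = {+\infty}$, then for every $x \in C$ the value $f(x) \in D$ is $\leq s$ for some $s \in A$ (as $A$ is unbounded above), whence $f(x) \in A$ by the down-set property; thus $S_{r}(g) = C$, which is strictly convex by hypothesis. The whole difficulty therefore concentrates on the remaining case $\b \in \RR$, where I claim that $S_{r}(g) = S_{\b}(f)$. The inclusion $S_{r}(g) \inc S_{\b}(f)$ is immediate, since $f(x) \in A$ forces $f(x) \leq \sup A = \b$. Because $f$ is strictly sub-convex, $S_{\b}(f)$ is strictly convex, so once the claim is established the proof is complete.

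The reverse inclusion $S_{\b}(f) \inc S_{r}(g)$ is where lower semi-continuity is essential, and this is the step I expect to be the main obstacle: the supremum $\b$ need not be attained, i.e. one may have $\b \notin A$, in which case $S_{r}(g)$ is an \emph{open} $f$-sublevel set $\{ f < \b \}$ rather than a genuine sublevel set of $f$. The key observation I would use is the dichotomy ``$\b \in A$ or $\b \notin D$''. Indeed, if $\b \in D$, choose a sequence $t_{n} \in A$ with $t_{n} \uparrow \b$; lower semi-continuity of $\f$ then gives $\f(\b) \leq \liminf_{n} \f(t_{n}) \leq r$, so $\b \in A$. Now take $x \in S_{\b}(f)$, so $f(x) \leq \b$ with $f(x) \in D$. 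If $\b \in A$, the down-set property yields $f(x) \in A$. If instead $\b \notin D$, then $\b \notin f(C)$, so no point of $C$ satisfies $f = \b$; in particular $f(x) \neq \b$ forces $f(x) < \b = \sup A$, so $f(x) \leq s$ for some $s \in A$, and the down-set property again gives $f(x) \in A$. In either subcase $x \in S_{r}(g)$, which proves the claim.

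Summing up, for every $r \in \RR$ the set $S_{r}(g)$ equals $\Oset$, or $C$, or $S_{\b}(f)$ with $\b \in \RR$, each of which is strictly convex; hence $g$ is strictly sub-convex. The only genuinely delicate point is the unattained-supremum subcase $\b \notin A$, which is precisely resolved by lower semi-continuity forcing $\b \notin D$, and a fortiori $\b \notin f(C)$, so that the open and closed $f$-sublevel sets at level $\b$ coincide.
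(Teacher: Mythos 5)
Your proof is correct and takes essentially the same route as the paper's: both reduce $S_{r}(g)$ to the $f$-sublevel set at the level $a \as \sup S_{r}(\f)$, use the monotonicity of $\f$ (your down-set property) for the easy inclusion, and invoke lower semi-continuity precisely at the dichotomy $a \in D$ (forcing $a \in S_{r}(\f)$) versus $a \notin D$ (where $f \. (x) < a$ and the supremum property conclude). The only difference is cosmetic: you dispose of the case $S_{r}(\f) = \Oset$ explicitly, a trivial case that the paper's phrasing (``its supremum $a \in \RR$'') silently assumes away, and you phrase the unbounded case as $\sup A = {+\infty}$ rather than $S_{r}(\f) = D$, which amounts to the same thing.
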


\smallskip %\bigskip

\begin{proof} 
Let us fix $r \in \RR$. 

\smallskip

If we have $S_{r}(\f) = D \.$, then $S_{r}(g)$ is reduced to $C \.$, which is strictly convex. 

\smallskip

Otherwise, there exists ${t_{0} \in D}$ which satisfies ${\f(t_{0}) > r}$, and hence we get 

\smallskip

\centerline{
$S_{r}(\f) \ \inc \ \RR \setmin [t_{0} , {+\infty}) 
\ = \ 
({-\infty} , t_{0}) \ \inc \ ({-\infty} , t_{0}]$
} 

%\smallskip

since $\f$ is non-decreasing. 

\smallskip

So, since $S_{r}(\f)$ is bounded from above, let us consider its supremum $a \in \RR$. 

\smallskip

We first have ${S_{r}(\f) \inc ({-\infty} , a]}$, which yields ${S_{r}(g) \inc S_{a}(f)}$ 
by using the obvious equality ${S_{r}(g) = \:\!\! \inv{f}(S_{r}(\f) \. )}$. 

\smallskip

Now, in order to prove the reverse inclusion, 
there are two cases to be considered depending on whether $a$ belongs to $D$ or not. 

\medskip

\noindent \textasteriskcentered \ 
Assume that we have $a \in D \.$. 

\smallskip

Then, since one has ${a \in \clos{S_{r}(\f)}^{\RR} \!\!}$, 
we obtain ${a \in \clos{S_{r}(\f)}^{D} \!\!}$, which writes ${a \in S_{r}(\f)}$ 
since $S_{r}(\f)$ is closed in $D$ by the lower semi-continuity of $\f$. 

\smallskip

So we have ${\f(a) \leq r}$, which implies ${g(x) = \f[f \. (x) \. ] \leq \f(a) \leq r}$ 
for any ${x \in S_{a}(f)}$ since $\f$ is non-decreasing, and this proves ${S_{a}(f) \inc S_{r}(g)}$. 

\medskip

\noindent \textasteriskcentered \ 
Assume that we have $a \not \in D \.$. 

\smallskip

Then, for each ${x \in S_{a}(f)}$, one gets ${\. f \. (x) < a}$, 
and hence there exists ${b \in S_{r}(\f)}$ 
which satisfies ${\. f \. (x) \leq b \leq a}$ by the very definition of $a$, 
and this yields ${g(x) = \f[f \. (x) \. ] \leq \f(b) \leq r}$. 

\smallskip

This proves the inclusion $S_{a}(f) \inc S_{r}(g)$. 

\medskip

Conclusion: in both cases, we proved that $S_{r}(g)$ is equal to the strictly convex set $S_{a}(f)$. 

\smallskip

The function $g$ is therefore strictly sub-convex since $r \in \RR$ is arbitrary. 
\end{proof}

\medskip %\bigskip

\begin{remark} \ 

\begin{enumerate}[1)]
   \item It is to be noticed that the strict convexity of $C$ cannot be dropped 
   in the hypotheses of Proposition~\ref{prop:ssc-comp}. 
   
   \noindent Indeed, let us consider the open convex subset 
   ${C \as ({- \. 1} , 1) \cart ({- \. 1} , 1)}$ of $\Rn{2}$, 
   the smooth function ${\. f \. : C \to \RR}$ defined by 
   ${\. f \. (x , y) \. \as 1 \. / \. [ \. (1 - x^{2}) \. (1 - y^{2}) \. ]}$, 
   the subset ${D \as [0 , {+\infty})}$ of $\RR$ and the function ${\f : D \to \RR}$ defined by 
   ${\f(t) \. \as t \. / \! (t + 1)}$. 
   
   \noindent Next, let us fix any real number ${r > 1}$, 
   define ${\l \as 1 \. / \. r \in (0 , 1)}$ and ${\disp a \as \sqrt{1 - \l} > 0}$, 
   and introduce the function ${\s : [{-a} , a] \to \RR}$ defined by 
   ${\disp \s(x) \. \as \sqrt{1 - \l \. / \. (1 - x^{2})}}$. 
   
   \noindent Then, for any ${x \in ({-a} , a)}$, we compute 
   ${\s' \. (x) = {- \. \l} x \. / \. [ \. (1 - x^{2})^{\. 2} \s(x) \. ]}$, 
   which shows that the derivative of $\s$ is decreasing on $({-a} , a)$ 
   (use the fact that $\s$ is a symmetric function which is positive and decreasing on $(0 , a) \.$), 
   and hence that $\s$ is strictly concave. 
   
   \noindent Therefore, the sublevel set 
   ${S_{r}(f) = \{ (x , y) \st |y| \leq \s(x) \. \}}$ is strictly convex. 
   
   \noindent On the other hand, for any ${r \in ({-\infty} , 1)}$, 
   the sublevel set ${S_{r}(f) = \Oset}$ is also strictly convex. 
   
   \smallskip
   
   \noindent Finally, since ${S_{1 \.}(f) = \{ (0 , 0) \. \}}$ is strictly convex too, 
   the function $\. f \.$ is strictly sub-convex. 
   
   \smallskip
   
   \noindent Nevertheless, even though $\f$ is non-decreasing 
   and lower semi-continuous (since it is continuous), 
   the function ${g : C \to \RR}$ defined by ${g(x , y) \. \as \f[f \. (x , y) \. ]}$ 
   is \emph{not} strictly sub-convex. 
   
   \noindent This is because we have ${\f(D) \inc ({-\infty} , 1]}$, 
   and hence the sublevel set $S_{1 \.}(g)$ is equal to ${\. \inv{f}(D) = C \.}$, 
   which is not strictly convex. 
   
   %\medskip
   \pagebreak
   
   \item Moreover, Proposition~\ref{prop:ssc-comp} is false 
   if $\f$ is not semi-continous as we can check by considering 
   the second function $\. f \.$ that we used in Point~4 
   in Remark~\ref{rem:strict-sub-cvx} and the non-decreasing function 
   ${\f \as \! - \!\. \1{({-\infty}, 0)}}$ defined on $\RR$. 
   
   \noindent Indeed, we then get that $\f$ is not semi-continuous at the origin 
   and ${g \as \f \comp \. f}$ is not strictly sub-convex since 
   ${S_{- \. 1}(g) = \:\!\! \inv{f}{( \. ({-\infty},0) \. )} = \{ (x , y) \in \Rn{2} \st y > |x| \}}$ 
   is not strictly convex. 
\end{enumerate} 
\end{remark}

\bigskip

From now on, let us focus on functions which are non-negative and positively homogeneous. 

\bigskip

\begin{proposition} \label{prop:qc-cont} 
   For any convex cone $C \.$ in $\Rn{n}$ and any non-negative function ${\. f \. : C \to \RR}$ 
   which is positively homogeneous of degree $\a > 0$, we have the implication 
   \[
   f \. \ \mbox{is sub-convex} 
   \qquad \imp \qquad \. 
   f \. \ \mbox{continuous on} \ \ri{C}~.
   \] 
\end{proposition}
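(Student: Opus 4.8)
The plan is to reduce the problem to the degree-one case, where sub-convexity is already known to coincide with convexity, and then to invoke the automatic continuity of convex functions on a finite-dimensional space.

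First I would set $g \as f^{1/\a} : C \to \RR$. Since $f$ is non-negative and positively homogeneous of degree $\a$, Proposition~\ref{prop:ap} shows that $g$ is non-negative and positively homogeneous (of degree one), while Corollary~\ref{cor:asc}, applied with the exponent $1/\a$, shows that $g$ is sub-convex precisely because $f$ is. As $C$ is a convex cone, Proposition~\ref{prop:sc-c} then upgrades the sub-convexity of the non-negative positively homogeneous function $g$ to genuine convexity, so $g$ is convex on $C$. I keep in mind that $f = g^{\a}$, so once $g$ is controlled I will recover $f$ by composing with the continuous map $t \longmapsto t^{\a}$ on $[0 , {+\infty})$.

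Next I would pass to the affine hull. We may assume $C \neq \Oset$, since otherwise $\ri{C} = \Oset$ and there is nothing to prove. The set $W \as \Aff{C}$ is a finite-dimensional affine subspace of $\Rn{n}$; in fact $W = \Vect{C}$ is a linear subspace, because $C$ is a non-empty cone and hence $0 \in \Aff{C}$ (Proposition~\ref{prop:vect-vs-aff-bis}). Endowed with the topology induced from $\Rn{n}$, the space $W$ is isomorphic, as a topological real vector space, to some $\Rn{k}$, and $\ri{C}$ is by definition an open subset of $W$ which is moreover convex, being the interior of the convex set $C$ in $W$. The restriction $\rest{g}{\ri{C}}$ is still convex, so transporting everything to $\Rn{k}$ through an affine homeomorphism and applying Corollary~\ref{cor:conv-cont-Rn}, I obtain that $\rest{g}{\ri{C}}$ is continuous.

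Finally, writing $f = g^{\a}$ on $\ri{C}$ and using that $s \longmapsto s^{\a}$ is continuous on $[0 , {+\infty})$ together with $g \geq 0$, I conclude that $f$ is continuous on $\ri{C}$. The step I expect to require the most care is this passage to the relative interior: one must make sure that the relative topology of $\Aff{C}$ really coincides with the canonical topology of $\Rn{k}$, so that Corollary~\ref{cor:conv-cont-Rn}, stated for the canonical $\Rn{n}$, genuinely applies to $\rest{g}{\ri{C}}$. Everything else is a routine use of the reduction $f \mapsto f^{1/\a}$ made available by Propositions~\ref{prop:ap} and~\ref{prop:sc-c} together with Corollary~\ref{cor:asc}.
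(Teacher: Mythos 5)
Your proposal is correct and follows essentially the same route as the paper's proof: pass to $g \as f^{1/\a}$, upgrade its sub-convexity (Corollary~\ref{cor:asc}) to convexity via Proposition~\ref{prop:sc-c} (the paper cites Corollary~\ref{cor:hsc-c}, which amounts to the same thing in degree one), and then apply Corollary~\ref{cor:conv-cont-Rn} inside $\Aff{C}$ before composing with $s \longmapsto s^{\a}$. The only difference is that you spell out the identification $\Aff{C} = \Vect{C} \cong \Rn{k}$ that the paper leaves implicit, which is a harmless (and sound) elaboration.
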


\bigskip

\begin{proof} 
Since $\. f^{1 \. / \. \a} \!$ is sub-convex by Corollary~\ref{cor:asc}, 
it is convex by Corollary~\ref{cor:hsc-c} 
(indeed, the function $\. f^{1 \. / \. \a} \!$ is positively homogeneous of degree one). 

\smallskip

Now, using Corollary~\ref{cor:conv-cont-Rn} in $\Aff{C} \inc \Rn{n}$, 
we get that $\. f^{1 \. / \. \a} \!$ is continuous on $\ri{C \.}$, and the same holds for $\. f \!$. 
\end{proof}

\bigskip

\begin{remark} \label{rem:qc-cont} \ 

\begin{enumerate}[1)]
   \item Proposition~\ref{prop:qc-cont} may be false if $\ri{C}$ is replaced by $C \.$. 
   
   \noindent Indeed, if we consider the convex cone 
   ${C \as ( \. (0 , {+\infty}) \cart \RR) \cup \{ (0 , 0) \. \}}$ in $\Rn{2}$ 
   and the function ${\. f \. : C \to \RR}$ defined by ${\. f \. (0 , 0) \. \as 0}$ 
   and ${\. f \. (x , y) \. \as (x^{2} \! + y^{2}) \:\!\! / \:\!\! (2 x)}$ 
   in case when one has ${x > 0}$, 
   then $\. f \.$ is non-negative, positively homogeneous and sub-convex 
   (and even strictly sub-convex since we have ${S_{0}(f) = \{ 0 \}}$ 
   and since $S_{1 \.}(f)$ is the closed disk in $\Rn{2}$ about $(1 , 0)$ of radius $1$), 
   but it is not continuous at $(0 , 0)$ since we have 
   ${\disp \. f \. (1 \. / \. n \, , \, 1 \. / \! \sqrt{n}) \to 1 \. / \. 2 \neq \. f \. (0 , 0)}$ 
   and ${\disp (1 \. / \. n \, , \, 1 \. / \! \sqrt{n}) \to (0 , 0)}$ as ${n \to {+\infty}}$. 
   
   \medskip
   
   \item On the other hand, Proposition~\ref{prop:qc-cont} is no longer true 
   if $\Rn{n}$ is replaced by an arbitrary topological real vector space. 
   
   \noindent Indeed, if we consider the vector space 
   ${V \! \as \Cl{0}(\RR , \RR) \cap \cLL{2}(\RR , \RR) \inc \Rn{\RR}}$ 
   endowed with the topology $\cT \.$ of pointwise convergence 
   (this is nothing else than the product topology) 
   and the function ${\. f \. : V \! \to \RR}$ defined by ${\. f \. (u) \. \as \twonorm{u}^{\. 2}}$, 
   then $\. f \.$ is non-negative, positively homogeneous of degree ${\a \as 2}$ and sub-convex 
   (it is even strictly convex since for any ${u \in V \!}$ its Hessian at $u$ 
   with respect to the norm $\twonorm{\cdot} \.$ on $V \!$ is equal to ${2 \scal{\cdot \,}{\cdot}}$, 
   which is positive definite). 
   
   \noindent Nevertheless, $\. f \.$ is not continuous with respect to $\cT \.$ 
   for it is not even upper semi-continuous at the origin with respect to $\cT \.$ 
   (since the sequence $\seq{u_{n}}{n}{1}$ in $V \!$ defined in~\cite[Example~1.1, page~797]{SimVer18} 
   converges to zero with respect to $\cT \.$ 
   and satisfies ${\. f \. (u_{n}) \geq 4 > 0 = \. f \. (0)}$ for any integer ${n \geq 1}$). 
\end{enumerate} 
\end{remark}

\bigskip

We shall now prove that the situation described in Point~2 of Remark~\ref{rem:qc-cont} 
does not appear when sub-convexity is replaced by \emph{strict} sub-convexity. 

\bigskip

In order to do this, the following easy-to-prove result will be useful. 

\bigskip

\begin{lemma} \label{lem:assc} 
   Let $C \.$ be a convex subset of a topological real vector space, 
   ${\. f \. : C \to \RR}$ a non-negative function and $\a > 0$ a real number. 
   Then we have the equivalence 
   \[
   f^{\a} \! \ \mbox{is strictly sub-convex} 
   \qquad \iff \qquad \. 
   f \. \ \mbox{is strictly sub-convex}~.
   \] 
\end{lemma}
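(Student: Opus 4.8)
The plan is to deduce the equivalence directly from a comparison of the sublevel sets of $\. f \.$ and $\. f^{\a} \!$, exploiting that the map ${t \longmapsto t^{\a}}$ is a strictly increasing bijection from $[0 , {+\infty})$ onto itself. Since both $\. f \.$ and $\. f^{\a} \!$ are non-negative, I would first dispose of the case ${r < 0}$: here one has ${S_{r}(f) = \Oset = S_{r}(f^{\a})}$, and the empty set is (vacuously) strictly convex, so these levels impose no constraint on either side.

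For ${r \geq 0}$, I would use the strict monotonicity of ${s \longmapsto s^{\a}}$ on $[0 , {+\infty})$ to write, for ${x \in C \.}$, the equivalence ${\. f \. (x)^{\a} \leq r \iff \. f \. (x) \leq r^{1 \. / \. \a}}$, which yields the identity ${S_{r}(f^{\a}) = S_{r^{1 \. / \. \a}}(f)}$ (with the usual convention ${0^{1 \. / \. \a} = 0}$ handling the level ${r = 0}$). The key observation is then that, as $r$ ranges over $[0 , {+\infty})$, the exponent $r^{1 \. / \. \a}$ also ranges over $[0 , {+\infty})$ bijectively, since ${\a > 0}$. Consequently the family ${(S_{r}(f^{\a}) \. )_{r \geq 0}}$ is exactly the family ${(S_{s}(f) \. )_{s \geq 0}}$, merely reindexed.

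Combining the two cases, the collection of \emph{all} sublevel sets of $f^{\a}$ coincides, as a set of subsets of $C \.$, with the collection of all sublevel sets of $\. f \!$. By Definition~\ref{def:strict-sub-cvx}, strict sub-convexity of a function is precisely the requirement that every one of its sublevel sets be strictly convex; since the two collections agree, $f^{\a}$ is strictly sub-convex if and only if $\. f \.$ is, which is the desired equivalence.

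I do not expect a genuine obstacle here: the argument is the strict-sub-convexity analogue of the monotone reparametrisation of sublevel sets already used for quasi-convexity in Proposition~\ref{prop:sqc} and for sub-convexity in Corollary~\ref{cor:asc}. The only points requiring a little care are the explicit bookkeeping of the sign of $r$ (so that the empty sublevel sets are treated separately) and the remark that the empty set counts as strictly convex, so that no level is inadvertently dropped from the comparison.
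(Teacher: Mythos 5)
Your proof is correct; the paper in fact states Lemma~\ref{lem:assc} without proof (calling it ``easy-to-prove''), and your argument --- reindexing the sublevel sets via ${S_{r}(f^{\a}) = S_{r^{1 \. / \. \a}}(f)}$ for ${r \geq 0}$, using the bijection ${r \longmapsto r^{1 \. / \. \a}}$ of ${[0 , {+\infty})}$, with negative levels giving the vacuously strictly convex empty set --- is exactly the intended one, parallel to Proposition~\ref{prop:sqc} and Corollary~\ref{cor:asc}. Note that invoking Proposition~\ref{prop:ssc-comp} instead would not work here, since that result assumes $C$ strictly convex while the lemma only assumes $C$ convex, so your direct sublevel-set comparison is the right route.
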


\bigskip

\begin{proposition} \label{prop:strict-sc+0-level-f-cont-riC} 
   Given a convex cone $C \.$ in a topological real vector space 
   and a non-negative function ${\. f \. : C \to \RR}$ 
   which is positively homogeneous of degree $\a > 0$, we have the implication 
   \[
   f \. \ \mbox{is strictly sub-convex} 
   \qquad \imp \qquad \. 
   f \. \ \mbox{is continuous on} \ \ri{C}~.
   \] 
\end{proposition}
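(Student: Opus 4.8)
The plan is to reduce to a positively homogeneous function of degree one, upgrade strict sub-convexity to genuine convexity, and then read off continuity on $\ri{C}$ from the classical criterion in Theorem~\ref{thm:conv-cont}. The whole difficulty will be concentrated in producing a single non-empty set, \emph{open} in the appropriate space, on which the function is bounded from above. This is precisely the feature that mere sub-convexity fails to deliver in infinite dimensions (compare Point~2 in Remark~\ref{rem:qc-cont}), and strict sub-convexity is exactly what will supply it, via Proposition~\ref{prop:ri-strict-cvx}.

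First I would dispose of the trivial case: if $\ri{C} = \Oset$ the conclusion is vacuous, so I may assume $\ri{C} \neq \Oset$, and in particular $C \neq \Oset$. I then set $g \as f^{1 \. / \. \a}$, which is non-negative and positively homogeneous of degree one. Since $g^{\a} = f$, Lemma~\ref{lem:assc} transfers the strict sub-convexity of $f$ to $g$; in particular $g$ is sub-convex by Point~1 in Remark~\ref{rem:strict-sub-cvx}, and being non-negative, positively homogeneous and defined on the convex cone $C$, it is in fact convex by Proposition~\ref{prop:sc-c}. Next, as $C$ is a non-empty cone it contains a ray, whose affine hull is a line through the origin (Point~1 in Proposition~\ref{prop:vect-vs-aff}); hence $0 \in \Aff{C}$, so $W \as \Aff{C} = \Vect{C}$ is a linear subspace of $V$ by Proposition~\ref{prop:vect-vs-aff-bis}. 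Regarding $W$ as a topological real vector space in its own right, $\ri{C}$ is by definition the interior of $C$ computed in $W$, hence a non-empty open convex subset of $W$ (its convexity following from Point~2 in Proposition~\ref{prop:conv-top}).

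The crux is the local boundedness of $g$. Because $C \neq \Oset$ and $1 > 0$, the sublevel set $S_{1 \.}(g)$ is non-empty by Point~3 in Proposition~\ref{prop:ph-sl}, and it is strictly convex since $g$ is strictly sub-convex. Proposition~\ref{prop:ri-strict-cvx} then yields $\ri{S_{1 \.}(g)} \neq \Oset$. Here I would invoke Point~3 in Corollary~\ref{cor:1-sublevel-f-span-C} to identify $\Aff{S_{1 \.}(g)} = \Vect{C} = W$, so that the relative interior $\ri{S_{1 \.}(g)}$ is nothing but the honest interior of $S_{1 \.}(g)$ computed in $W$. This produces a non-empty open set $U \as \ri{S_{1 \.}(g)}$ in $W$ on which $g \leq 1$, and the inclusion $S_{1 \.}(g) \inc C$ forces $U$ to lie in the interior of $C$ in $W$, that is, $U \inc \ri{C}$. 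I expect this to be the main obstacle, not because of any hard computation but because it is the exact point where strictness is indispensable: for a merely sub-convex $g$ the sublevel set may have empty interior in $W$, and the continuity conclusion then genuinely fails.

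Finally I would apply Theorem~\ref{thm:conv-cont} with the topological real vector space $W$, the non-empty open convex set $\ri{C}$, and the convex function $g$ restricted to $\ri{C}$, which is bounded from above on the non-empty open subset $U \inc \ri{C}$; this gives that $g$ is continuous on $\ri{C}$. Since the map $t \longmapsto t^{\a}$ is continuous on $[0 , {+\infty})$ and $f = g^{\a}$, the continuity of $f$ on $\ri{C}$ follows at once.
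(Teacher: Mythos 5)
Your proposal is correct and follows essentially the same route as the paper's own proof: reduce to the degree-one function ${g \as f^{1 \. / \. \a}}$ via Lemma~\ref{lem:assc}, upgrade to convexity by Proposition~\ref{prop:sc-c}, obtain a non-empty open set ${U \as \ri{S_{1 \.}(g) \.}}$ in ${W \as \. \Vect{C}}$ from the strict convexity of $S_{1 \.}(g)$ together with Proposition~\ref{prop:ri-strict-cvx} and Corollary~\ref{cor:1-sublevel-f-span-C}, and conclude with Theorem~\ref{thm:conv-cont} (you even make explicit the inclusion ${U \inc \ri{C}}$, which the paper leaves implicit). The only cosmetic slip is the phrase ``$C$ contains a ray'': a non-empty cone may be $\{ 0 \}$ or blunt, but your conclusion ${0 \in \Aff{C}}$ still holds, trivially in the first case and via Point~1 in Proposition~\ref{prop:vect-vs-aff} applied to ${(0 , 1) x}$ for any non-zero ${x \in C}$ in the second, which is in effect what you invoke.
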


\bigskip

\begin{proof} 
Assume that $\ri{C}$ is not empty (since otherwise there is nothing to prove) 
and that $\. f$ % no \. at the end of the line
is strictly sub-convex. 

\smallskip

First of all, notice that we have ${\Aff{C} = \Aff{\ri{C} \.}}$ 
by Point~2.b in Proposition~\ref{prop:ri-rc-rb} with ${S \as C \.}$. 

\smallskip

Now, in case when $C$ is not equal to $\{ 0 \}$, we have ${\Bcone{C} = C \,\. \setmin \{ 0 \}}$ 
and ${\Cone{C} = C \cup \{ 0 \}}$ by Point~4 in Remark~\ref{rem:cone} with ${A \as \. V \!\!}$, 
which yields the equalities ${\Aff{C \,\. \setmin \{ 0 \} \.} = \. \Vect{C}}$ 
and ${\Aff{C \cup \{ 0 \} \.} = \. \Vect{C}}$ 
according to Points~1 and~2 in Proposition~\ref{prop:abs-cone} with ${S \as C \.}$. 

\smallskip

Therefore, the inclusions ${C \,\. \setmin \{ 0 \} \inc C \inc C \cup \{ 0 \}}$ 
imply ${\Aff{C} = W \! \as \. \Vect{C}}$, 
and this latter equality still holds in case when we have ${C = \{ 0 \}}$. 

\smallskip

Since $\. f \.$ is sub-convex by Point~1 in Remark~\ref{rem:strict-sub-cvx}, 
the positively homogeneous function ${g \as \. f^{1 \. / \. \a}}$ % no \! at the end of the line
is also sub-convex by Corollary~\ref{cor:asc}, 
and hence it is convex owing to Proposition~\ref{prop:sc-c}. 

\smallskip

On the other hand, since $C$ is not empty, we have ${\Aff{S_{1 \.}(g) \.} = W \.}$ 
by Point~3 in Corollary~\ref{cor:1-sublevel-f-span-C}, 
and hence the interior of $S_{1 \.}(g)$ in $W$ is nothing else than ${U \as \ri{S_{1 \.}(g) \.}}$. 

\smallskip

Now, according to Lemma~\ref{lem:assc}, the function $g$ is strictly sub-convex, 
and hence the sublevel set $S_{1 \.}(g)$ is strictly convex. 

\smallskip

Therefore, since $S_{1 \.}(g)$ is not empty by Point~3 in Proposition~\ref{prop:ph-sl}, 
we get ${U \neq \Oset}$ by Proposition~\ref{prop:ri-strict-cvx} 
applied with the strictly convex subset $S_{1 \.}(g)$ of $W \!\!$. 

\smallskip

Finally, since $g$ is bounded from above (by the constant $\. 1$) 
on the non-empty open subset $U \!$ of the open convex subset $\ri{C}$ of $W \!\!$, 
it is continuous on $\ri{C}$ by Theorem~\ref{thm:conv-cont}, 
and this proves that $\. f \.$ is continuous on $\ri{C}$ too. 
\end{proof}

\bigskip

\begin{remark*} 
It is to be noticed that Proposition~\ref{prop:strict-sc+0-level-f-cont-riC} 
is no longer true if $\ri{C}$ is replaced by $C$ as Point~1 in Remark~\ref{rem:qc-cont} shows. 
\end{remark*}

\bigskip

A straightforward consequence of Proposition~\ref{prop:strict-sc+0-level-f-cont-riC} 
when $C$ is the whole space is the following result. 

\bigskip

\begin{corollary} \label{cor:strict-sc+0-level-f-cont} 
   Given a topological real vector space $V \!\.$ 
   and a non-negative function ${\. f \. : V \! \to \RR}$ 
   which is positively homogeneous of degree $\a > 0$, we have the implication 
   \[
   f \. \ \mbox{is strictly sub-convex} 
   \qquad \imp \qquad \. 
   f \. \ \mbox{is continuous}~.
   \] 
\end{corollary}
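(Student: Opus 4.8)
The plan is to deduce this immediately from Proposition~\ref{prop:strict-sc+0-level-f-cont-riC} by taking $C \as V$. The only things that require checking are that the hypotheses of that proposition are met in this special case and that its conclusion ``$\. f$ is continuous on $\ri{C}$'' upgrades to ``$\. f$ is continuous on all of $V$''.

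First I would observe that the whole space $V$ is a convex cone: it satisfies $\l V \inc V$ for every scalar $\l > 0$, so it is a cone, and it is trivially convex (equivalently, being a cone, it is stable under $+$ in the sense of Proposition~\ref{prop:cone-+}). Since $\. f \.$ is by assumption non-negative, positively homogeneous of degree $\a > 0$, and strictly sub-convex on $V \!\!$, every hypothesis of Proposition~\ref{prop:strict-sc+0-level-f-cont-riC} is satisfied with $C \as V \!\.$.

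Next I would identify $\ri{V}$. Since $V$ is itself an affine subspace containing the origin, one has $\Aff{V} = V \!\!$, so the relative topology of $\Aff{V}$ in $V$ coincides with the ambient topology of $V \!\!$. Consequently $\ri{V}$ is just the ordinary interior of $V$ inside $V \!\!$, which is $V$ itself; that is, $\ri{V} = V \!\.$.

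Therefore Proposition~\ref{prop:strict-sc+0-level-f-cont-riC} yields that $\. f \.$ is continuous on $\ri{V} = V \!\!$, i.e.\ $\. f \.$ is continuous, which is the desired conclusion. I do not expect any genuine obstacle: all the substantive work has already been carried out in Proposition~\ref{prop:strict-sc+0-level-f-cont-riC}, and this corollary is simply its specialization to the case $C = V \!\!$, the sole (trivial) point being the identification $\ri{V} = V \!\.$.
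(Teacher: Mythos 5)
Your proposal is correct and is exactly the paper's intended argument: the paper states this corollary as ``a straightforward consequence of Proposition~\ref{prop:strict-sc+0-level-f-cont-riC} when $C$ is the whole space,'' and your verification that $V$ is a convex cone with $\Aff{V} = V$, hence $\ri{V} = V$, supplies the only (trivial) detail the paper leaves implicit.
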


\bigskip

Let us now give the relationship between strict sub-convexity 
and strict convexity for positively homogeneous functions. 

\bigskip

\begin{proposition} \label{prop:strict-sc+0-level+cone-cvx-f-alpha-f-strict-c} 
   Given a convex cone $C \.$ in a topological real vector space 
   and a non-negative function $\. f \. : C \to \RR$ which is positively homogeneous, 
   we have the implication 
   %%%%%%%%%%
   \begin{eqnarray*} 
      f \. \ \mbox{is strictly sub-convex and} \ \. \inv{f}(0) \inc \{ 0 \} 
      & & 
      \\ 
      & & 
      \hspace{-30pt} \imp \. f^{\a} \! \ \mbox{is strictly convex for any real number} \ \a > 1~. 
   \end{eqnarray*} 
   %%%%%%%%%%
\end{proposition}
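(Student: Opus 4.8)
The plan is to check the defining inequality of strict convexity for $f^{\a}$ head on. I would fix two distinct points ${x , y \in C}$ and a number ${t \in (0 , 1)}$, put ${z \as (1 - t) x + t y}$ (which lies in $C$ since a convex cone is convex), and try to prove ${f^{\a}(z) < (1 - t) f^{\a}(x) + t f^{\a}(y)}$. The key preliminary observation is that $\. f \.$ is itself convex: being strictly sub-convex it is sub-convex by Point~1 in Remark~\ref{rem:strict-sub-cvx}, and a non-negative positively homogeneous sub-convex function on a convex cone is convex by Proposition~\ref{prop:sc-c}. Setting ${\f(s) \as s^{\a}}$, which is increasing and \emph{strictly} convex on $[0 , {+\infty})$ exactly because ${\a > 1}$, I would then record the chain
\[
f^{\a}(z) \ = \ \f(f \. (z))
\ \leq \ \f( \. (1 - t) f \. (x) + t f \. (y))
\ \leq \ (1 - t) f^{\a}(x) + t f^{\a}(y)~,
\]
where the first inequality combines the monotonicity of $\f$ with the convexity of $\. f \!$, and the second is the convexity of $\f$.

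The argument then branches on the two values ${\. f \. (x)}$ and ${\. f \. (y)}$. If ${\. f \. (x) \neq \. f \. (y)}$, the strict convexity of $\f$ makes the second inequality above strict and we are done at once. Suppose instead that ${\. f \. (x) = \. f \. (y) \sa c}$. Should ${c = 0}$, the hypothesis ${\inv{f}(0) \inc \{ 0 \}}$ would force ${x = y = 0}$, against ${x \neq y}$; hence ${c > 0}$. Now $x$ and $y$ are distinct points of the sublevel set $S_{c}(f)$, which is strictly convex because $\. f \.$ is strictly sub-convex, so the very definition of strict convexity gives ${z \in {]x , y[} \inc \ri{S_{c}(f) \.}}$. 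It remains to upgrade this membership to the strict bound ${\. f \. (z) < c}$, and I expect this to be the only genuine obstacle. Here I would use that ${S_{c}(f) = c \. S_{1 \.}(f)}$ by Point~1 in Proposition~\ref{prop:ph-sl}, so that its affine hull is the \emph{linear} subspace ${W \as \Vect{C}}$ by Point~3 in Corollary~\ref{cor:1-sublevel-f-span-C}. Consequently every $s z$ lies in ${W = \Aff{S_{c}(f) \.}}$ and the map ${s \longmapsto s z}$ from $\RR$ to $W$ is continuous; since ${z \in \ri{S_{c}(f) \.}}$ is relatively open in $W \!\!$, there exists some ${s > 1}$ with ${s z \in \ri{S_{c}(f) \.} \inc S_{c}(f)}$, whence positive homogeneity yields ${s \. f \. (z) = \. f \. (s z) \leq c}$, that is ${\. f \. (z) \leq c \. / \. s < c}$. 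Therefore ${f^{\a}(z) = \f(f \. (z)) < \f(c) = (1 - t) f^{\a}(x) + t f^{\a}(y)}$, which closes this case.

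Since both cases give the strict inequality and ${x , y , t}$ were arbitrary, this would establish that $f^{\a}$ is strictly convex for every ${\a > 1}$. The heart of the proof is the equal-value case, where the point is to turn the purely topological fact ${z \in \ri{S_{c}(f) \.}}$ into the quantitative strict decrease ${\. f \. (z) < c}$; the mechanism is the dilation ${s \longmapsto s z}$ inside the linear subspace $\Aff{S_{c}(f) \.}$ together with positive homogeneity. It is worth noting that this step needs only positive homogeneity and the linearity of $\Aff{S_{c}(f) \.}$, so that the continuity of $\. f \.$ (guaranteed independently by Corollary~\ref{cor:strict-sc+0-level-f-cont}) is not actually invoked.
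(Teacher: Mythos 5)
Your proof is correct, and it shares the paper's overall skeleton: the same split into the cases ${f\.(x) \neq f\.(y)}$ and ${f\.(x) = f\.(y) = c > 0}$, with the first case handled identically (convexity of $f$ via Remark~\ref{rem:strict-sub-cvx} and Proposition~\ref{prop:sc-c}, then strict convexity of ${s \longmapsto s^{\a}}$). Where you genuinely diverge is the equal-value case, which is the heart of the matter. The paper gets from ${z \in \ri{S_{c}(f)\.}}$ to ${f\.(z) < c}$ by a detour through gauge functions: it identifies $f$ with the restriction of $p_{S_{1\.}(f)}$ to $C$ (Corollary~\ref{cor:f=p} together with Point~4 of Proposition~\ref{prop:gauge-vect}), painstakingly verifies that ${\Aff{\inv{p_{S_{1\.}(f)}}([0,1])\!} = \Vect{C}}$, invokes Point~2 of Lemma~\ref{lem:gauge-top} to obtain ${\ri{S_{1\.}(f)\.} \inc \inv{f}([0,1)\.)}$, and finally rescales to level $r$. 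You instead apply the dilation mechanism directly to the sublevel set at level $c$: since ${\Aff{S_{c}(f)\.} = \Vect{C}}$ is a linear subspace, the continuous map ${s \longmapsto s z}$ lands in it, so openness of $\ri{S_{c}(f)\.}$ there produces some ${s > 1}$ with ${s z \in S_{c}(f) \inc C}$, and positive homogeneity gives ${f\.(z) \leq c \./\. s < c}$. This is exactly the trick hidden inside the paper's Lemma~\ref{lem:gauge-top}, but used once and directly, so the entire gauge apparatus (Corollary~\ref{cor:f=p}, the affine-hull computation for the gauge sublevel set, the rescaling step) is bypassed; your version is shorter and makes transparent that only homogeneity, the cone property of $C$, and the linearity of $\Aff{S_{c}(f)\.}$ are needed — your closing observation that continuity of $f$ is never invoked agrees with the paper's proof as well. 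The only step you leave implicit is that ${\Aff{S_{c}(f)\.} = \Aff{c\,S_{1\.}(f)\.} = \Vect{C}}$, which is immediate since the homothety with ratio ${c > 0}$ is a linear automorphism fixing $\Vect{C}$; the trade-off is that the paper's longer route reuses machinery (Lemma~\ref{lem:gauge-top}, Corollary~\ref{cor:f=p}) it has already built and exploits elsewhere, whereas yours is self-contained.
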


\bigskip

\begin{proof} 
Assume that $C$ is not empty (since otherwise the implication to be proved is trivial) 
and that $\. f \.$ is strictly sub-convex and satisfies ${\. \inv{f}(0) \inc \{ 0 \}}$, 
fix a real number ${\a > 1}$, and pick two distinct points ${x , y \in C \.}$. 

\smallskip

There are two cases to be considered 
depending on whether $\. f \. (x)$ and $\. f \. (y)$ are equal or not. 

\medskip

\noindent \textasteriskcentered \ Case when we have $\. f \. (x) \neq \. f \. (y)$. 

\smallskip

According to Point~1 in Remark~\ref{rem:strict-sub-cvx} and Corollary~\ref{cor:hsc-c}, 
the function $\. f \.$ is convex, which writes 
${\. f \. ( \. (1 - t) x + t y) \leq (1 - t) f \. (x) + t f \. (y)}$ 
for any ${t \in (0 , 1)}$, and hence we get 

\smallskip

\centerline{
$f^{\a} \. ( \. (1 - t) x + t y) 
\ \leq \ 
[ \. (1 - t) f \. (x) + t f \. (y) \. ]^{\. \a} \. 
\ < \ 
(1 - t) f^{\a} \. (x) + t f^{\a} \. (y)$
} 

\smallskip

since the function ${\f : [0 , {+\infty}) \to \RR}$ defined by ${\f(s) \. \as s^{\a} \!}$ 
is non-decreasing and strictly convex. 

\medskip

\noindent \textasteriskcentered \ Case when we have $\. f \. (x) = \. f \. (y)$. 

\smallskip

The property ${\. \inv{f}(0) \inc \{ 0 \}}$ together with ${x \neq y}$ imply ${r \as \. f \. (x) > 0}$, 
and hence we can write ${x , y \in S_{r}(f) \inc \rc{S_{r}(f) \.}}$, 
which yields ${(1 - t) x + t y \in \ri{S_{r}(f) \.}}$ for any ${t \in (0 , 1)}$ 
since $\. f \.$ is strictly sub-convex. 

\smallskip

Now, we have ${\widehat{S_{1 \.}(f)} = S_{1 \.}(f) \cup \{ 0 \}}$ 
by Point~3 in Proposition~\ref{prop:ph-sl}, 
and hence ${\rest{(p_{\widehat{S_{1 \.}(f)}}){\.}}{C}  = j \comp \. f}$ % no \. at the end of the line
owing to Corollary~\ref{cor:f=p} with ${S \as \widehat{S_{1 \.}(f)}}$, 
which yields ${\rest{(p_{S_{1 \.}(f)}){\.}}{C} = j \comp \. f \.}$ 
since one has ${p_{\widehat{S_{1 \.}(f)}} = p_{S_{1 \.}(f)}}$ 
according to Point~4 in Proposition~\ref{prop:gauge-vect} 
with ${S \as S_{1 \.}(f)}$ and ${V \! \as \. \Vect{C}}$. 

\smallskip

Therefore, we get the equalities ${\inv{p_{S_{1 \.}(f)}}([0 , 1]) \cap C = S_{1 \.}(f)}$ 
and ${\inv{p_{S_{1 \.}(f)}}([0 , 1) \. ) \cap C = \:\!\! \inv{f}([0 , 1) \. )}$. 

\smallskip

As a consequence, the first equality implies ${S_{1 \.}(f) \inc \inv{p_{S_{1 \.}(f)}}{([0 , 1])}}$, 
and hence we obtain the inclusion 
${\Aff{S_{1 \.}(f) \.} \inc \Aff{\inv{p_{S_{1 \.}(f)}}{([0 , 1])} \!}}$, 
which leads to ${\Vect{C} \inc \Aff{\inv{p_{S_{1 \.}(f)}}{([0 , 1])} \!}}$ 
since one has ${\Aff{S_{1 \.}(f) \.} = \. \Vect{C}}$ 
by Point~3 in Corollary~\ref{cor:1-sublevel-f-span-C}. 

\smallskip

On the other hand, using Point~1 in Proposition~\ref{prop:gauge-vect} with ${S \as S_{1 \.}(f)}$, 
Point~2 in Remark~\ref{rem:cone} and Proposition~\ref{prop:vect-vs-aff-bis} with ${S \as C \.}$, 
one can write 
%%%%%%%%%%
\begin{eqnarray*} 
   \inv{p_{S_{1 \.}(f)}}{([0 , 1])} 
   \ \inc \ 
   \inv{p_{S_{1 \.}(f)}}{([0 , {+\infty}) \. )} \! 
   & \inc & \! 
   \Bcone{S_{1 \.}(f) \.} \cup \{ 0 \} \\ 
   & = & \! 
   \Cone{S_{1 \.}(f) \.} \ \inc \ \Cone{C} 
   \ \inc \ 
   \Vect{{\Cone{C}} \.} \ = \ \. \Vect{C} 
\end{eqnarray*} 
%%%%%%%%%%
since we have ${S_{1 \.}(f) \inc C \.}$, and this yields 

\smallskip

\centerline{
$\Aff{\inv{p_{S_{1 \.}(f)}}{([0 , 1])} \!} 
\ \inc \ 
\Aff{\Vect{C} \.} 
\ \inc \ 
\Vect{\Vect{C} \.} \ = \ \. \Vect{C}$~.
} 

\smallskip

Therefore, we have obtained $\Aff{\inv{p_{S_{1 \.}(f)}}{([0 , 1])} \!} = \. \Vect{C \.}$. 

\smallskip

The inclusion ${S_{1 \.}(f) \inc \inv{p_{S_{1 \.}(f)}}{([0 , 1])}}$ 
then implies ${\ri{S_{1 \.}(f) \.} \inc \ri{\inv{p_{S_{1 \.}(f)}}{([0 , 1])} \!} \!\!}$ 
(notice that both interiors are computed in $\. \Vect{C} \.$), 
and hence ${\ri{S_{1 \.}(f) \.} \inc \ri{\inv{p_{S_{1 \.}(f)}}{([0 , 1])} \!} \cap C}$ 
since we have ${\ri{S_{1 \.}(f) \.} \inc S_{1 \.}(f) \inc C \.}$. 

\smallskip

Now, according to Point~2 in Lemma~\ref{lem:gauge-top} 
with ${S \as \widehat{S_{1 \.}(f)}}$ and ${V \! \as \. \Vect{C}}$,
we can write ${\ri{\inv{p_{S_{1 \.}(f)}}{([0 , 1])} \!} \inc \inv{p_{S_{1 \.}(f)}}{([0 , 1) \. )}}$ 
since one has ${p_{\widehat{S_{1 \.}(f)}} = p_{S_{1 \.}(f)}}$, which implies 

\smallskip

\centerline{
$\ri{S_{1 \.}(f) \.} 
\ \inc \ 
\inv{p_{S_{1 \.}(f)}}{([0 , 1) \. )} \cap C 
\ = \ \:\!\! 
\inv{f}([0 , 1) \. )$~.
} 

\smallskip

The positive homogeneity of $\. f \.$ and Point~3 in Proposition~\ref{prop:ap} then yield 

\smallskip

\centerline{
$\ri{S_{r}(f) \.} 
\ = \ 
r \act \ri{S_{1 \.}(f) \.} 
\ \inc \ 
r \act \inv{f}([0 , 1) \. ) 
\ = \ \:\!\! 
\inv{f}([0 , r) \. )$
} 

\smallskip

given that the homothety with ratio $r$ is a topological vector space automorphism 
which preserves $\. \Vect{C \.}$.

\smallskip

Finally, one obtains
${\. f^{\a} \. ( \. (1 - t) x + t y) < r^{\a} \! 
= \. f^{\a} \. (x) = (1 - t) f^{\a} \. (x) + t f^{\a} \. (y)}$ for any ${t \in (0 , 1)}$
since the function ${\f : [0 , {+\infty}) \to \RR}$ defined by 
${\f(s) \. \as s^{\a} \!}$ is increasing. 
\end{proof}

%\bigskip
\pagebreak

\begin{remark*} \ 

\begin{enumerate}[1)]
   \item Proposition~\ref{prop:strict-sc+0-level+cone-cvx-f-alpha-f-strict-c} 
   is not true without the assumption ${\. \inv{f}(0) \inc \{ 0 \}}$ 
   as we can easily check when considering the zero function defined on the real line. 
   
   \medskip
   
   \item It is to be noticed that the converse implication $\! \con \!$ 
   in Proposition~\ref{prop:strict-sc+0-level+cone-cvx-f-alpha-f-strict-c} is not true. 
   
   \noindent Indeed, let us consider the vector space 
   ${V \! \as \Cl{0}(\RR , \RR) \cap \cLL{2}(\RR , \RR) \inc \Rn{\RR}}$ 
   endowed with its natural scalar product $\scal{\cdot \,}{\cdot}$ 
   whose associated norm is denoted by $\twonorm{\cdot}$. 
   
   \noindent Then the function ${\. f \. : V \! \to \RR}$ defined by 
   ${\. f \. (u) \. \as \twonorm{u} \.}$ is non-negative and positively homogeneous. 
   
   \noindent Moreover, $\. f^{\a} \!$ is strictly convex for any real number ${\a > 1}$ 
   according to Point~2 in Remark~\ref{rem:sc}. 
   
   \noindent Nevertheless, $\. f \.$ is not strictly sub-convex 
   when $V \!$ is endowed with the topology $\cT \.$ of pointwise convergence 
   (this is nothing else than the product topology) 
   since if it were the case the function $\. f \.$ would be continuous with respect to $\cT \.$ 
   by Corollary~\ref{cor:strict-sc+0-level-f-cont}. 
   
   \noindent But $\. f \.$ is not even upper semi-continuous at the origin with respect to $\cT \.$ 
   since the sequence $\seq{u_{n}}{n}{1}$ in $V \!$ defined in~\cite[Example~1.1, page~797]{SimVer18} 
   converges to zero with respect to $\cT \.$ 
   and satisfies ${\. f \. (u_{n}) \geq 4 > 0 = \. f \. (0)}$ for any integer ${n \geq 1}$. 
\end{enumerate} 
\end{remark*}

\bigskip

Finally, if we sum up all the previous results, we eventually get the following useful property. 

\bigskip

\begin{theorem} \label{thm:strict-cvx-strict-qc-strict-subc} 
   Let $V \!\.$ be a topological real vector space 
   and ${\. f \. : V \! \to \RR}$ a non-negative function which is positively homogeneous. 
   Then for any real number ${\a > 1}$, the following conditions are equivalent: 
   
   \begin{enumerate}
      \item $\. f \.$ is continuous and $\. f^{\a} \!$ is strictly convex. 
      
      \smallskip
      
      \item $\. f \.$ is continuous and strictly quasi-convex. 
      
      \smallskip
      
      \item $\. f \.$ is strictly sub-convex and we have $\. \inv{f}(0) = \{ 0 \}$. 
   \end{enumerate} 
\end{theorem}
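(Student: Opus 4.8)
The plan is to establish the cyclic chain of implications $\textsf{Point~1} \imp \textsf{Point~2} \imp \textsf{Point~3} \imp \textsf{Point~1}$, fixing throughout a real number $\a > 1$. The key preliminary observation is that the whole space $V \!$ is at once a convex cone and---since $\Aff{V} = V \!$ forces $\ri{V} = V \! = \rc{V}$, so that $]x , y[ \inc V = \ri{V}$ for any distinct $x , y$---a strictly convex subset of itself; this lets me feed $C \as V \!$ into the earlier results whose hypotheses demand either a convex cone or a strictly convex domain. I would also record at the outset that $\. f \. (0) = 0$ by Remark~\ref{rem:ph-z}, so that $\{ 0 \} \inc \inv{f}(0)$ always holds and the two conditions $\inv{f}(0) = \{ 0 \}$ and $\inv{f}(0) \inc \{ 0 \}$ coincide.

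For $\textsf{Point~1} \imp \textsf{Point~2}$, the strict convexity of $\. f^{\a} \!$ immediately yields the strict quasi-convexity of $\. f \.$ by Proposition~\ref{prop:strict-c-strict-qc}, while continuity is carried along unchanged. For $\textsf{Point~2} \imp \textsf{Point~3}$, I would first invoke Point~4 in Remark~\ref{rem:sqc}, applied with the convex cone $C \as V \!$, to obtain $\inv{f}(0) \inc \{ 0 \}$, hence $\inv{f}(0) = \{ 0 \}$ by the preliminary remark; then, since $V \!$ is strictly convex and $\. f \.$ is continuous and strictly quasi-convex, Proposition~\ref{prop:sqc+c-ssc} gives that $\. f \.$ is strictly sub-convex.

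The substance of the theorem is carried by $\textsf{Point~3} \imp \textsf{Point~1}$, which repackages the two deepest earlier results. Continuity of $\. f \.$ follows from Corollary~\ref{cor:strict-sc+0-level-f-cont}, using that $\. f \.$ is non-negative, positively homogeneous (of degree $1 > 0$) and strictly sub-convex. The strict convexity of $\. f^{\a} \!$ then follows from Proposition~\ref{prop:strict-sc+0-level+cone-cvx-f-alpha-f-strict-c}, applied with the convex cone $C \as V \!$, using the strict sub-convexity of $\. f \.$ together with the inclusion $\inv{f}(0) \inc \{ 0 \}$ extracted from Point~3. I expect no genuine obstacle in the argument itself: all the analytic difficulty has already been front-loaded into the propositions of the previous subsections, so this final proof is essentially a matter of assembly. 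The only points demanding care are bookkeeping ones---verifying that the ambient space $V \!$ legitimately plays the role of both a convex cone and a strictly convex set in the cited statements, and keeping the direction of the inclusion $\inv{f}(0) \inc \{ 0 \}$ straight so that the equality appearing in Point~3 feeds correctly into Proposition~\ref{prop:strict-sc+0-level+cone-cvx-f-alpha-f-strict-c}.
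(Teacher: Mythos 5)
Your proposal is correct and follows essentially the same route as the paper's own proof: Proposition~\ref{prop:strict-c-strict-qc} for Point~1$\imp$Point~2, Proposition~\ref{prop:sqc+c-ssc} together with Remark~\ref{rem:ph-z} and Point~4 in Remark~\ref{rem:sqc} for Point~2$\imp$Point~3, and Corollary~\ref{cor:strict-sc+0-level-f-cont} plus Proposition~\ref{prop:strict-sc+0-level+cone-cvx-f-alpha-f-strict-c} (with $C \as V$) for Point~3$\imp$Point~1. Your preliminary checks---that $V$ is both a convex cone and a strictly convex subset of itself, and that $\. f \. (0) = 0$ makes the inclusion $\inv{f}(0) \inc \{ 0 \}$ equivalent to the equality $\inv{f}(0) = \{ 0 \}$---are exactly the bookkeeping the paper relies on implicitly.
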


\bigskip

\begin{proof} \ 

\textsf{Point~1 $\imp$ Point~2.} 
Assume that Point~1 is satisfied. 

\smallskip

Then Point~2 is a mere consequence of Proposition~\ref{prop:strict-c-strict-qc} 
with $C \. \as \. V \!\.$. 

\medskip

\textsf{Point~2 $\imp$ Point~3.} 
Assume that Point~2 is satisfied. 

\smallskip

Then the function $\. f \.$ is strictly sub-convex 
by Proposition~\ref{prop:sqc+c-ssc} with $C \. \as \. V \!\.$. 

\smallskip

On the other hand, Remark~\ref{rem:ph-z} with ${C \. \as \. V \!}$ and Point~4 in Remark~\ref{rem:sqc} 
yield $\. \inv{f}(0) = \{ 0 \}$. 

\medskip

\textsf{Point~3 $\imp$ Point~1.} 
Assume that Point~3 is satisfied. 

\smallskip

Then Corollary~\ref{cor:strict-sc+0-level-f-cont} implies that the function $\. f \.$ is continuous 
since it is positively homogeneous of degree one. 

\smallskip

Moreover, Proposition~\ref{prop:strict-sc+0-level+cone-cvx-f-alpha-f-strict-c} 
with ${C \. \as \. V \!}$ shows that $\. f^{\a} \!$ is strictly convex. 
\end{proof}

\bigskip

\begin{remark} \ 

\begin{enumerate}[1)]
   \item It is to be pointed out that if Point~1 in Theorem~\ref{thm:strict-cvx-strict-qc-strict-subc} 
   is satisfied for some real number ${\a > 1}$ (for instance ${\a \as 2}$), 
   then it is satisfied for any real number ${\a > 1}$. 
   
   \medskip
   
   \item In the particular case where $V \!$ is equal to $\Rn{n}$ equipped with the usual topology, 
   the continuity in Point~1 in Theorem~\ref{thm:strict-cvx-strict-qc-strict-subc} can be dropped 
   since the strict convexity of $\. f^{\a} \!$ implies its convexity 
   by Point~1 in Remark~\ref{rem:sc}, 
   and hence that $\. f \.$ is continuous according to Corollary~\ref{cor:conv-cont-Rn}. 
   
   \medskip
   
   \item On the other hand, when $V \!$ is as in the previous point, 
   the continuity in Point~2 in Theorem~\ref{thm:strict-cvx-strict-qc-strict-subc} is useless 
   since the strict quasi-convexity of $\. f \.$ implies its quasi-convexity 
   by Point~1 in Remark~\ref{rem:sqc}, and hence its sub-convexity by Proposition~\ref{prop:sc-qc}. 
   Therefore, the positive homogeneity of $\. f \.$ and Proposition~\ref{prop:sc-c} 
   insure that $\. f \.$ is convex, and hence continuous according to Corollary~\ref{cor:conv-cont-Rn}. 
\end{enumerate} 
\end{remark}

\bigskip

Now, a natural question is to know whether Theorem~\ref{thm:strict-cvx-strict-qc-strict-subc} holds 
in the more general framework when the function $\. f \.$ is defined on a non-empty convex cone $C$ 
in $V \!$ and when the condition ${\. \inv{f}(0) = \{ 0 \}}$ in Point~3 
is replaced by ${\. \inv{f}(0) \inc \{ 0 \}}$. 

\noindent Unfortunately, this is actually not always true as we can check when considering the open convex cone 
${C \as \{ (x , y) \in \Rn{2} \st y > |x| \}}$ in ${V \! \as \Rn{2}}$ 
and the function ${\. f \. : C \to \RR}$ defined by 
${\disp \. f \. (x , y) \. \as \sqrt{x^{2} \. + y^{2}} \.}$ 
which satisfies ${\. \inv{f}(0) = \Oset \inc \{ 0 \}}$. 

\noindent Indeed, $\. f^{2} \!$ is strictly convex since its Hessian is positive definite, 
and hence $\. f \.$ is strictly quasi-convex by Proposition~\ref{prop:strict-c-strict-qc}. 
Moreover, $\. f \.$ is non-negative, positively homogeneous and continuous. 

\noindent Nevertheless, since the boundary of 
${S_{1 \.}(f) = C \cap \{ (x , y) \in \Rn{2} \st x^{2} \! + y^{2} \. \leq 1 \}}$ in $\Rn{2}$ 
contains the line segment ${[ \. (0 , 0) \,\. , \,\. (1 , 1) \. ]}$, 
the function $\. f \.$ is not strictly sub-convex. 

\noindent This shows that even if the condition ${\. \inv{f}(0) = \{ 0 \}}$ in Point~3 
in Theorem~\ref{thm:strict-cvx-strict-qc-strict-subc} is replaced by ${\. \inv{f}(0) \inc \{ 0 \}}$, 
the implication Point~2 $\imp$ Point~3 is not satisfied by $\. f \!$. 

\bigskip

\noindent However, in the case where the convexity of $C$ is replaced by the \emph{strict} convexity, 
Theorem~\ref{thm:strict-cvx-strict-qc-strict-subc} still holds. 

\bigskip

\begin{corollary} \label{cor:strict-cvx-strict-qc-strict-subc-epi} 
   Let $C \.$ be a non-empty strictly convex cone in a topological real vector space $V$ 
   % no \!\. at the end of the line
   and ${\. f \. : C \to \RR}$ a non-negative function which is positively homogeneous. 
   Then for any real number ${\a > 1}$, the following conditions are equivalent: 
   
   \begin{enumerate}
   \item $\. f \.$ is continuous and $\. f^{\a} \!$ is strictly convex. 
   
   \smallskip
   
   \item $\. f \.$ is continuous and strictly quasi-convex. 
   
   \smallskip
   
   \item $\. f \.$ is strictly sub-convex and we have $\. \inv{f}(0) \inc \{ 0 \}$. 
   \end{enumerate} 
\end{corollary}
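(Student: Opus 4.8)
The plan is to establish the cyclic chain of implications Point~1 $\imp$ Point~2 $\imp$ Point~3 $\imp$ Point~1, following the same skeleton as the proof of Theorem~\ref{thm:strict-cvx-strict-qc-strict-subc}, the key observation being that a non-empty strictly convex cone $C$ is in particular both a convex cone and a strictly convex set (see Point~2 in Remark~\ref{rem:strict-cvx}), so that most of the machinery used there applies verbatim with $C$ in place of $V$. The only genuinely new difficulty, caused by allowing $C \neq V$, will appear in the continuity part of Point~3 $\imp$ Point~1.

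For Point~1 $\imp$ Point~2 I would simply invoke Proposition~\ref{prop:strict-c-strict-qc} with our set $C$: the strict convexity of $f^{\a}$ for some $\a > 1$ yields at once that $f$ is strictly quasi-convex, and continuity is carried along. For Point~2 $\imp$ Point~3 I would split the conclusion into its two parts. The strict sub-convexity of $f$ follows from Proposition~\ref{prop:sqc+c-ssc}, whose hypotheses---a strictly convex domain together with a continuous, strictly quasi-convex $f$---are exactly what Point~2 provides. The inclusion $\inv{f}(0) \inc \{ 0 \}$ follows from Point~4 in Remark~\ref{rem:sqc}, which applies because $C$ is a convex cone and $f$ is positively homogeneous and strictly quasi-convex; note that here, unlike in the theorem, I do not claim the reverse inclusion, since $C$ may be blunt and hence need not contain the origin.

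For Point~3 $\imp$ Point~1 the strict convexity of $f^{\a}$ for every real $\a > 1$ is immediate from Proposition~\ref{prop:strict-sc+0-level+cone-cvx-f-alpha-f-strict-c}, whose hypotheses ($C$ a convex cone, $f$ non-negative and positively homogeneous, $f$ strictly sub-convex, and $\inv{f}(0) \inc \{ 0 \}$) coincide with Point~3. The continuity of $f$ is where the work lies: Proposition~\ref{prop:strict-sc+0-level-f-cont-riC} yields only continuity of $f$ on $\ri{C}$, and this must be upgraded to continuity on all of $C$. I expect this to be the main obstacle.

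To carry out the upgrade I would set $W \as \Vect{C} = \Aff{C}$ and regard $C$ as a strictly convex cone inside the topological vector space $W$ endowed with the subspace topology; the relative interior $\ri{C}$ is then exactly the interior of $C$ in $W$, and it is non-empty by Proposition~\ref{prop:ri-strict-cvx} since $C$ is a non-empty strictly convex set. Applying Proposition~\ref{prop:strict-cvx-cone} inside $W$ gives a dichotomy: either $C = W$, in which case $\ri{C} = C$ and continuity on $\ri{C}$ is already continuity on $C$; or $W$ is one-dimensional and $C \cup \{ 0 \}$ is a ray, so that $C \inc [0 , {+\infty}) x_{0}$ for some $x_{0} \neq 0$. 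In the latter case the non-emptiness of $\ri{C}$ together with $C \neq W$ forces $W$ to carry the usual (rather than the trivial) topology of the line, and positive homogeneity gives $f(t x_{0}) = t f(x_{0})$, an affine and hence continuous function of $t$; since the topology $C$ inherits from $V$ agrees with the one it inherits from $W$, the function $f$ is continuous on $C$. In both branches $f$ is continuous on $C$, which closes the cycle.
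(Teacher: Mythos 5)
Your proof is correct, but its decomposition is genuinely different from the paper's. The paper applies the dichotomy of Proposition~\ref{prop:strict-cvx-cone} at the very outset, inside ${W \as \Vect{C}}$ (where ${\ri{C} \neq \Oset}$ by Proposition~\ref{prop:ri-strict-cvx}): in the case ${C = W}$ it invokes Theorem~\ref{thm:strict-cvx-strict-qc-strict-subc} wholesale with $W \!$ in place of $V \!$ (the inclusion ${\{ 0 \} \inc \inv{f}(0)}$ holding automatically by Remark~\ref{rem:ph-z}, so that the inclusion in Point~3 becomes an equality), while in the ray case it reduces to ${W \! = \RR}$, ${C \cup \{ 0 \} = [0 , {+\infty})}$ and ${\. f \. (t) = a t}$ with the usual topology, and simply checks that all three conditions hold when ${a > 0}$ and all fail when ${a = 0}$ --- an ``all or nothing'' verification that sidesteps proving any implication. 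You instead prove the cyclic chain Point~1 $\imp$ Point~2 $\imp$ Point~3 $\imp$ Point~1 directly from the propositions already stated for general domains --- Proposition~\ref{prop:strict-c-strict-qc} for the first implication, Proposition~\ref{prop:sqc+c-ssc} together with Point~4 of Remark~\ref{rem:sqc} for the second, Proposition~\ref{prop:strict-sc+0-level+cone-cvx-f-alpha-f-strict-c} for the strict convexity of $f^{\a} \!$ --- and you use the dichotomy only to upgrade the continuity furnished by Proposition~\ref{prop:strict-sc+0-level-f-cont-riC} from $\ri{C}$ to $C \.$. Your route is a bit longer but more informative: it isolates exactly where each hypothesis enters (only Point~2 $\imp$ Point~3 uses the strict convexity of $C \.$, consistently with the paper's counterexample ${C \as \{ (x , y) \st y > |x| \}}$ preceding the corollary, and only the continuity step needs the one-dimensional dichotomy), whereas the paper's proof is shorter by leaning on the full-space theorem. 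Two small points you should make explicit: the identity ${\Aff{C} = \Vect{C}}$ for a non-empty cone, which you assert, is precisely what is established at the beginning of the proof of Proposition~\ref{prop:strict-sc+0-level-f-cont-riC} and should be cited from there rather than taken for granted; and in the ray case the formula ${\. f \. (t x_{0}) = t \. f \. (x_{0})}$ extends to ${t = 0}$ (when the ray is pointed and ${0 \in C}$) because ${\. f \. (0) = 0}$ by Remark~\ref{rem:ph-z}, which is needed for continuity at the apex.
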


\bigskip

\begin{proof} 
Let us consider the vector subspace $W \! \as \. \Vect{C}$ of $V \!\.$. 

\smallskip

Owing to Proposition~\ref{prop:ri-strict-cvx}, $\ri{C}$ is not empty, 
and hence Proposition~\ref{prop:strict-cvx-cone} implies that either $C$ is equal to $W \!$ 
or ${C \cup \{ 0 \}}$ is a ray and $W \!$ is one-dimensional. 

\medskip

\noindent \textasteriskcentered \ 
In case when $C$ is equal to $W \!\!$, 
we may apply Theorem~\ref{thm:strict-cvx-strict-qc-strict-subc} 
with $W \!$ instead of $V \!\!$, and then obtain the equivalence of Points~1, 2 and~3 
since the inclusion ${\{ 0 \} \inc \inv{f}(0)}$ already holds 
by Remark~\ref{rem:ph-z} with ${C \. \as \. V \!\.}$. 

\medskip

\noindent \textasteriskcentered \ 
Now, in case when ${C \cup \{ 0 \}}$ is a ray and $W \!$ is one-dimensional, 
the situation is the same as if we had ${W \! = \RR}$ 
together with ${C \cup \{ 0 \} = [0 , {+\infty})}$ 
and if the function ${\. f \. : C \to \RR}$ were defined by $\. f \. (t) \. \as a t$, 
where $a$ is a non-negative real constant. 

\smallskip

On the other hand, since the open set $\ri{C}$ in $W \!$ is not empty 
and since we have ${\ri{C} \inc C \neq W \!\!}$, the topology on $W \!$ is not trivial, 
and hence it is equal to the usual topology on $\RR$ according to Point~4 in Remark~\ref{rem:0-tvs}. 

\smallskip

Moreover, if $a$ is positive, then Points~1, 2 and~3 are all satisfied by $\. f \!$, 
and if we have ${a = 0}$ then none of these points are satisfied by $\. f \!$. 

\smallskip

This proves that Points~1, 2 and~3 are equivalent. 
\end{proof}

\bigskip

%%%%%%%%%%%%%%%%%%%%

\section{Application to Minkowski norms} \label{sec:Application-to-Minkowski-norms}

So far, we have been dealing with positively homogeneous functions, 
and we obtained accurate relationships between strict convexity and strict sub-convexity, 
that is, between geometric and topological aspects of these functions. 

\smallskip

We shall now apply all these properties to the particular case of Minkowski norms 
in order to characterize those ones which are strictly convex and to give a generalization 
of a result that Carothers proved for normed vector spaces (see~\cite[Theorem~11.1, page~110]{Car04}). 

\bigskip

The first characterization of strict convexity for Minkowski norms 
on arbitrary topological real vector spaces is stated as follows. 

\medskip %\bigskip

\begin{theorem} \label{thm:Carothers-generalized} 
   Given a topological real vector space $V \!\.$ and a Minkowski norm $N \!$ on $V \!\!$, 
   the following properties are equivalent: 
   
   \begin{enumerate}
   \item $N \!$ is strictly sub-convex. 
   
   \smallskip
   
   \item $N \!$ is continuous, and for any two vectors ${x \neq y}$ in $V \!\.$ 
   which satisfy ${N \. (x) = N \. (y) = 1}$, 
   we have the inequality ${N \. ( \. (x + y) \! / \. 2) < 1}$. 
   \end{enumerate} 
\end{theorem}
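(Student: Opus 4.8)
The plan is to derive this equivalence by feeding $N$ into the general Theorem~\ref{thm:strict-cvx-strict-qc-strict-subc} and then translating the resulting analytic condition into the purely algebraic equivalences of Proposition~\ref{prop:Minkowski-strict-sub-additivity}. First I would observe that a Minkowski norm $N$ is a non-negative function which is positively homogeneous of degree one, and that the point-separating property together with $N(0) = 0$ (Point~1 in Proposition~\ref{prop:Minkowski}) gives $\inv{N}(0) = \{ 0 \}$. Hence Theorem~\ref{thm:strict-cvx-strict-qc-strict-subc} applies with $\. f \as N$: its Point~3 reads ``$N$ is strictly sub-convex'' (the condition $\inv{N}(0) = \{ 0 \}$ being automatic here), while its Point~2 reads ``$N$ is continuous and strictly quasi-convex''. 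Thus Point~1 of the present theorem is equivalent to $N$ being continuous and strictly quasi-convex, and it only remains to match strict quasi-convexity with the geometric midpoint condition.

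To do this I would first record that $N$ is convex: since $V \!$ is a convex cone and $N$ is positively homogeneous and sub-additive, Proposition~\ref{prop:phs-c} yields convexity, so in particular $N( (1 - t) x + t y) \leq (1 - t) N(x) + t N(y) \leq \max \{ N(x) , N(y) \}$ for all $x , y \in V \!$ and $t \in (0 , 1)$. The forward implication is then immediate: if $N$ is strictly quasi-convex and $x \neq y$ satisfy $N(x) = N(y) = 1$, taking $t \as 1 \. / \. 2$ gives $N( (x + y) \! / \. 2) < \max \{ N(x) , N(y) \} = 1$, which is exactly Point~2 of Proposition~\ref{prop:Minkowski-strict-sub-additivity}. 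For the converse I would assume the midpoint condition, which by Proposition~\ref{prop:Minkowski-strict-sub-additivity} is equivalent to its Point~4 (no flat piece anywhere on the unit sphere), and deduce strict quasi-convexity. Given distinct $x , y$ and $t \in (0 , 1)$, set $M \as \max \{ N(x) , N(y) \}$; the point-separating property forces $M > 0$, since otherwise $x = y = 0$. If one had $N( (1 - t) x + t y) = M$, then the convexity inequality above would force $N(x) = N(y) = M$, so that $x \. / \. M$ and $y \. / \. M$ are distinct unit vectors; applying Point~4 of Proposition~\ref{prop:Minkowski-strict-sub-additivity} to them and using positive homogeneity would give $N( (1 - t) x + t y) < M$, a contradiction. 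Hence $N( (1 - t) x + t y) < M$, i.e.\ $N$ is strictly quasi-convex.

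Chaining these together, $N$ is strictly sub-convex if and only if $N$ is continuous and strictly quasi-convex (Theorem~\ref{thm:strict-cvx-strict-qc-strict-subc}), if and only if $N$ is continuous and satisfies the midpoint inequality of Proposition~\ref{prop:Minkowski-strict-sub-additivity}, which is precisely Point~2 of the statement. I expect the main obstacle to be the converse half of the middle step---promoting the midpoint (equivalently, ``no flat piece'') condition to full strict quasi-convexity for arbitrary distinct $x , y$ and arbitrary $t$---where the crucial point is to exploit the convexity of $N$ to show that equality in the quasi-convexity inequality can occur only when $N(x) = N(y)$, after which normalization onto the unit sphere reduces the matter to Proposition~\ref{prop:Minkowski-strict-sub-additivity}. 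Everything else is a matter of invoking the already-established results.
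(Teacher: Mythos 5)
Your proof is correct, but it takes a genuinely different route from the one printed in the paper. The paper proves the theorem directly on the unit ball: it sets $S \as \inv{N}([0,1])$, identifies $N$ with the gauge $p_{S}$ via Proposition~\ref{prop:f=p}, uses continuity (obtained from Corollary~\ref{cor:strict-sc+0-level-f-cont} with $\a \as 1$) together with Points~2.b and~2.c of Proposition~\ref{prop:gauge-top} to get the identifications $\intr{S} = \inv{N}([0,1))$ and $\bd{S} = \inv{N}(1)$, and then argues with convex-set topology (Remark~\ref{rem:cvx-open-segment} and Point~4 in Remark~\ref{rem:strict-cvx}) to pass from the midpoint condition to strict convexity of $S$, finishing by transporting strict convexity to every sublevel set $S_{r}(N) = rS$ by homothety. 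You instead treat the theorem as a formal corollary of Theorem~\ref{thm:strict-cvx-strict-qc-strict-subc} (the route the introduction advertises but the printed proof does not actually follow): the equivalence Point~2 $\iff$ Point~3 there, with $\inv{N}(0) = \{0\}$ automatic for a Minkowski norm, reduces everything to the purely algebraic bridge ``strictly quasi-convex $\iff$ midpoint condition,'' which you settle correctly via convexity of $N$ (Proposition~\ref{prop:phs-c}): equality in the quasi-convexity bound forces $N(x) = N(y) = M > 0$, and normalizing onto the unit sphere reduces to the Point~2 $\iff$ Point~4 equivalence of Proposition~\ref{prop:Minkowski-strict-sub-additivity}. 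Your reduction is shorter and cleanly separates the topology (all concentrated in Theorem~\ref{thm:strict-cvx-strict-qc-strict-subc}) from the algebra (your bridging lemma, which in effect extends Proposition~\ref{prop:Minkowski-strict-sub-additivity} off the unit sphere); the paper's direct argument is longer but makes the geometric content explicit---in particular the identification of $\bd{S}$ with the unit sphere, which is what justifies the ``no flat piece'' intuition---and uses only the weaker ingredients Proposition~\ref{prop:gauge-top} and Corollary~\ref{cor:strict-sc+0-level-f-cont} rather than the full strength of the general theorem.
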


\smallskip %\bigskip

\begin{proof} 
First of all, ${S \as \inv{N}([0 , 1]) = S_{1 \.}(N)}$ is a star-shaped subset of $V \!$ 
by Point~2 in Proposition~\ref{prop:ph-sl} with ${C \as V \!}$ and ${\. f \as N \.}$ 
since $S_{1 \.}(N)$ contains the origin, and hence the gauge function $p_{S}$ of $S$ 
satisfies ${p_{S}(x) = N \. (x)}$ for any ${x \in V \!}$ 
owing to Proposition~\ref{prop:f=p} with ${C \as V \!\.}$. 

\smallskip

Moreover, notice that we have ${\Aff{S} = \. V \!}$ by Proposition~\ref{prop:1-sublevel-f-span-V} 
with ${\. f \. \as \. N \!}$, which yields ${\ri{S} = \intr{S} \.}$, ${\rc{S} = \clos{S}}$ 
and ${\rb{S} = \clos{S} \setmin \intr{S} = \bd{S} \.}$. 

\medskip

\textsf{Point~1}$\imp$\textsf{Point~2.} 
Assume that Point~1 is satisfied, and let ${x \neq y}$ be two vectors in $V \!$ 
such that we have ${N \. (x) = N \. (y) = 1}$. 

\smallskip

Then Corollary~\ref{cor:strict-sc+0-level-f-cont} with ${\a \as 1}$ insures that $N \.$ is continuous, 
which implies that we have both ${\intr{S} = \inv{N}([0 , 1) \. )}$ and ${\bd{S} = \inv{N}(1)}$ 
by Points~2.b and~2.c in Proposition~\ref{prop:gauge-top}. 

\smallskip

Since we have ${x , y \in \inv{N}(1) = \bd{S} \inc \clos{S} = \rc{S}}$ 
and since $S$ is strictly convex, 
the open line segment ${]x , y[}$ lies in ${\ri{S} = \intr{S} = \inv{N}([0 , 1) \. )}$. 

\smallskip

Hence, we get in particular ${N \. ( \. (x + y) \! / \. 2) < 1}$ 
since the condition ${x \neq y}$ implies ${(x + y) \! / \. 2 \in {]x , y[}}$. 

\medskip

\textsf{Point~2}$\imp$\textsf{Point~1.} 
Suppose that Point~2 is true, and let us first prove that $S$ is strictly convex. 

%\smallskip

Since $N \.$ is continuous, we have both ${\intr{S} = \inv{N}([0 , 1) \. )}$ 
and ${\bd{S} = \inv{N}(1)}$ by Points~2.b and~2.c in Proposition~\ref{prop:gauge-top}. 

\smallskip 

Then, given ${x , y \in \bd{S} \.}$, one has ${N \. (x) = N \. (y) = 1}$, 
which yields ${N \. ( \. (x + y) \! / \. 2) < 1}$. 
So this reads ${(x + y) \! / \. 2 \in \inv{N}([0 , 1) \. ) = \intr{S} \.}$, 
and hence we get ${{]x , y[} \cap \intr{S} \neq \Oset \.}$. 

\smallskip

On the other hand, since $N \.$ is positively homogeneous and sub-additive, 
it is convex by Proposition~\ref{prop:phs-c} with ${C \as V \!\!}$, 
which implies that $S$ is a convex subset of $V \!$ as the pre-image by $N \.$ 
of the convex subset $[0 , 1]$ of $\RR$. 

%\smallskip

Therefore, using the implication in Remark~\ref{rem:cvx-open-segment}, 
we get ${{]x , y[} \inc \intr{S} =  \ri{S}}$. 

\smallskip

So, owing to Point~4 in Remark~\ref{rem:strict-cvx}, we have proved that $S$ is strictly convex. 

\smallskip

Now, for any ${r \in (0 , {+\infty})}$, we have ${S_{r}(N) = r S}$ 
by Point~3 in Proposition~\ref{prop:ap} with ${C \as V \!\!}$, 
${\. f \. \as \. N \.}$ and ${\a \as 1}$, which shows that $S_{r}(N)$ is also strictly convex 
since the homothety of $V$ % no \! at the end of the line
with ratio $r$ is an affine homeomorphism. 

\smallskip

Moreover, since we have ${\inv{N}(0) = \{ 0 \}}$, 
the sublevel set $S_{0}(N)$ reduces to $\{ 0 \}$, and hence is strictly convex. 

\smallskip

Finally, for any ${r \in ({-\infty} , 0)}$, the sublevel set $S_{r}(N)$ is empty, 
and hence is strictly convex too. 

\smallskip

This proves that $N \.$ is strictly sub-convex. 
\end{proof}

%\bigskip
\pagebreak

\begin{remark} \label{rem:Carothers-generalized} \ 

\begin{enumerate}[1)] 
   \item When the Minkowski norm $N \.$ in Theorem~\ref{thm:Carothers-generalized} is reduced to a norm 
   and when $V \!$ is endowed with the topology associated with $N \!$, then we obtain a generalization 
   of the characterization of ``strictly convex'' normed vector spaces (see~\cite[page~108]{Car04} 
   and~\cite[page~30]{JohLin01} for the definition) given by Point~i 
   in~\cite[Theorem~11.1, page~110]{Car04}. This is indeed a consequence 
   of Point~3 in Remark~\ref{rem:strict-sub-cvx}. 
   
   \medskip
   
   \item It is to be noticed that if we drop continuity 
   in Point~2 in Theorem~\ref{thm:Carothers-generalized}, 
   the implication Point~2$\imp$Point~1 is no longer true. 
   
   \noindent Indeed, if we consider the vector space 
   ${V \! \as \Cl{0}(\RR , \RR) \cap \cLL{2}(\RR , \RR) \inc \Rn{\RR}}$ 
   endowed with the topology $\cT \.$ of pointwise convergence 
   (this is nothing else than the product topology) 
   and the function ${\. f \. : V \! \to \RR}$ defined by ${\. f \. (u) \. \as \. \twonorm{u}^{\. 2}}$, 
   then $\. f \.$ is not continuous with respect to $\cT$ % no \. at the end of the line
   for it is not even upper semi-continuous at the origin with respect to $\cT \.$ 
   (since the sequence $\seq{u_{n}}{n}{1}$ in $V \!$ defined in~\cite[Example~1.1, page~797]{SimVer18} 
   converges to zero with respect to $\cT \.$ and satisfies 
   ${\. f \. (u_{n}) \geq 4 > 0 = \. f \. (0)}$ for any integer ${n \geq 1}$). 
   
   \noindent On the other hand, $\. f \.$ is strictly convex since for any ${u \in V \!}$ 
   its Hessian at $u$ with respect to the norm $\twonorm{\cdot} \.$ on $V \!$ 
   is equal to ${2 \scal{\cdot \,}{\cdot}}$, which is positive definite. 
   
   \noindent Now, let us consider the Minkowski norm $N \.$ on $V \!$ defined by 
   $N \. (u) \. \as \twonorm{u}$. 
   
   \noindent Then, for any two vectors ${u \neq v}$ in $V \!$ 
   which satisfy ${N \. (u) = N \. (v) = 1}$, we get 
   
   \smallskip
   
   \centerline{
   $f \. ( \. (u + v) \! / \. 2) \ < \ [f \. (u) + \. f \. (v) \. ] \. / \. 2 \ = \ 1$
   } 
   
   \smallskip
   
   \noindent by the strict convexity of $\. f \!$, 
   and hence ${N \. ( \. (u + v) \! / \. 2) < 1}$ holds since we have ${\. f = N^{2} \!}$. 
   
   \noindent However, if $N \.$ were strictly sub-convex, then it would be continuous 
   by Proposition~\ref{prop:strict-sc+0-level-f-cont-riC} with ${C \as V \!\!}$, 
   ${\. f \. \as \. N \.}$ and ${\a \as 1}$, 
   and hence the function $N^{2} \! = \twonorm{\cdot}^{\. 2}$ would be continuous too, 
   which is not the case as shown above. 
   
   \medskip
   
   \item Even though all the norms on a real vector space are not strictly sub-convex, 
   it is well known that any norm which gives rise to a separable Banach space is actually equivalent 
   to a smooth and strictly sub-convex norm (see for example~\cite[page~33]{JohLin01}). 
   
   \noindent Therefore, since there are many separable Banach spaces 
   (especially in functional analysis), 
   this shows that there is a lot of Banach spaces whose norm is strictly sub-convex. 
   This is a good reason for which strictly sub-convex norms are worth being studied, 
   not to mention the fact that it is more convenient to deal with such norms. 
\end{enumerate} 
\end{remark}

\bigskip

The second characterization of strict convexity for Minkowski norms 
on arbitrary topological real vector spaces is given by the following result. 

\bigskip

\begin{theorem} \label{thm:Minkowski-norm-strict-sub-cvx} 
   Let $V \!\.$ be a topological real vector space and ${N : V \! \to \RR}$ a function. 
   Then for any real number ${\a > 1}$, the following conditions are equivalent: 
   
   \begin{enumerate}
   \item $N \!$ is strictly sub-convex and is a Minkowski norm on $V \!\!$. 
   
   \smallskip
   
   \item $N \!$ is non-negative, positively homogeneous, 
   continuous and $N^{\a} \!$ is strictly convex. 
   \end{enumerate} 
\end{theorem}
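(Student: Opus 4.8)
The plan is to deduce the whole statement from Theorem~\ref{thm:strict-cvx-strict-qc-strict-subc}, using its three equivalent conditions as the bridge between the two characterizations. The key observation is that, for a non-negative function $N$ with $N \. (0) = 0$, the point-separating property of a Minkowski norm is \emph{exactly} the condition $\inv{N}(0) = \{ 0 \}$ appearing in Point~3 of that theorem. So the essential work is to match the four defining properties of a Minkowski norm against the hypotheses of Theorem~\ref{thm:strict-cvx-strict-qc-strict-subc} (non-negativity and positive homogeneity) and its Point~3 (strict sub-convexity together with $\inv{N}(0) = \{ 0 \}$), and to recover the one remaining property, sub-additivity, from the geometric data.

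First I would prove \textbf{Point~1 $\imp$ Point~2}. Assuming $N$ is a strictly sub-convex Minkowski norm, it is in particular non-negative and positively homogeneous, and the point-separating property together with $N \. (0) = 0$ (Point~1 in Proposition~\ref{prop:Minkowski}) gives $\inv{N}(0) = \{ 0 \}$. Thus $N$ satisfies Point~3 in Theorem~\ref{thm:strict-cvx-strict-qc-strict-subc}, whence Point~1 of that theorem yields that $N$ is continuous and $N^{\a}$ is strictly convex. Combined with the already-noted non-negativity and positive homogeneity, this is precisely condition~(2).

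For the converse \textbf{Point~2 $\imp$ Point~1}, I would start from the hypotheses that $N$ is non-negative, positively homogeneous, continuous and that $N^{\a}$ is strictly convex, which is exactly Point~1 in Theorem~\ref{thm:strict-cvx-strict-qc-strict-subc}. That theorem then delivers its Point~3: $N$ is strictly sub-convex and $\inv{N}(0) = \{ 0 \}$. It remains to check that $N$ is a Minkowski norm. Non-negativity and positive homogeneity are given; the point-separating property follows from $\inv{N}(0) = \{ 0 \}$ together with non-negativity. The only nontrivial property is sub-additivity, which I would obtain as follows: since $N$ is strictly sub-convex it is sub-convex (Point~1 in Remark~\ref{rem:strict-sub-cvx}), and being non-negative and positively homogeneous on the convex cone $V$ it is therefore convex by Proposition~\ref{prop:sc-c}; Proposition~\ref{prop:phs-c} then converts convexity into sub-additivity. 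Hence $N$ is a Minkowski norm, and it is strictly sub-convex, which is condition~(1).

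The argument is essentially a repackaging of Theorem~\ref{thm:strict-cvx-strict-qc-strict-subc}, so I do not expect a genuine obstacle; the only place demanding care is the recovery of sub-additivity in the converse direction. There one must apply the convexity/sub-additivity correspondence (Propositions~\ref{prop:sc-c} and~\ref{prop:phs-c}) on the whole space $V$ regarded as a convex cone, rather than trying to derive convexity of $N$ directly from the strict convexity of $N^{\a}$: the latter route would require passing from $N^{\a}$ convex back to $N = (N^{\a})^{1 \. / \. \a}$ convex, and since the exponent $1 \. / \. \a < 1$ this is not legitimate (the converse of Proposition~\ref{prop:ac} fails). Routing through strict sub-convexity is exactly what avoids this pitfall.
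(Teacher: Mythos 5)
Your proposal is correct and follows essentially the same route as the paper's own proof: both directions are obtained by matching conditions~(1) and~(2) against Points~3 and~1 of Theorem~\ref{thm:strict-cvx-strict-qc-strict-subc}, with the point-separating property identified with $\inv{N}(0) = \{ 0 \}$, and sub-additivity recovered in the converse direction via exactly the same chain (Point~1 in Remark~\ref{rem:strict-sub-cvx}, then Proposition~\ref{prop:sc-c}, then Proposition~\ref{prop:phs-c}). Your closing caution about the illegitimacy of passing from $N^{\a}$ convex to $N = (N^{\a})^{1 / \a}$ convex is well taken and is precisely why the paper routes through sub-convexity as well.
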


% ===> HAL ne gère pas \\ avec l'environnement multline !!!
%
%
%\begin{theorem} \label{thm:Minkowski-norm-strict-sub-cvx} 
%   Let $V \!\.$ be a topological real vector space and ${N : V \! \to \RR}$ a function. 
%   Then for any real number ${\a > 1}$, the following equivalence holds: 
%   %%%%%%%%%%
%   \begin{multline*} 
%      N \. \ \mbox{is strictly sub-convex and is a Minkowski norm on} \ V \\ 
%      \iff \ N \. \ \mbox{is non-negative, positively homogeneous,} \\ 
%      \mbox{continuous and} \ N^{\a} \! \ \mbox{is strictly convex}~. 
%   \end{multline*} 
%   %%%%%%%%%%
%\end{theorem}

\bigskip

Of course, once $N \.$ satisfies Point~2, it automatically satisfies both Points~2 and~3 
in Theorem~\ref{thm:strict-cvx-strict-qc-strict-subc}. 

\bigskip

\begin{proof} 
Let us fix a real number $\a > 1$. 

\medskip

\textsf{Point~1}$\imp$\textsf{Point~2.} 
Assume that Point~1 is satisfied. 

\smallskip

Since we have ${\inv{N}(0) = \{ 0 \}}$, the function $N \.$ is continuous 
and $N^{\a} \!$ is strictly convex owing to the implication Point~3$\imp$Point~1 
in Theorem~\ref{thm:strict-cvx-strict-qc-strict-subc}. 

%\medskip
\pagebreak

\textsf{Point~2}$\imp$\textsf{Point~1.} 
Assume that Point~1 is satisfied. 

\smallskip

Then $N \.$ is sub-convex by Point~1 in Remark~\ref{rem:strict-sub-cvx} with ${C \as V \!\!}$, 
and hence it is convex owing to Proposition~\ref{prop:sc-c} with ${C \as V \!\.}$. 

\smallskip

Therefore, Proposition~\ref{prop:phs-c} with ${C \as V \!}$ implies that $N \.$ is sub-additive. 

\smallskip

On the other hand, the strict sub-convexity of $N \.$ and the equality ${\inv{N}(0) = \{ 0 \}}$ 
are a consequence of the implication Point~1$\imp$Point~3 
in Theorem~\ref{thm:strict-cvx-strict-qc-strict-subc}. 
\end{proof}

\bigskip

\begin{remark} \ 

\begin{enumerate}[1)]
   \item When the function $N \.$ in Theorem~\ref{thm:Minkowski-norm-strict-sub-cvx} 
   is reduced to a norm and when $V \!$ is endowed with the topology associated with $N \!$, 
   then we obtain a generalization of the characterization of ``strictly convex'' normed vector spaces 
   (see~\cite[page~108]{Car04} and~\cite[page~30]{JohLin01} for the definition) given by Point~ii 
   in~\cite[Theorem~11.1, page~110]{Car04}. This is indeed a consequence 
   of Point~3 in Remark~\ref{rem:strict-sub-cvx}. 
   
   \medskip
   
   \item It is to be noticed that a function ${N \. : V \! \to \RR}$ 
   which satisfies the equivalent conditions 
   in Theorem~\ref{thm:Minkowski-norm-strict-sub-cvx} is a continuous Minkowski norm on $V \!\!$, 
   which shows that the topology $\cT \.$ defined by $N \.$ (see Introduction) 
   is coarser than the vector space topology on $V \!\.$. 
\end{enumerate} 
\end{remark}

\bigskip

%%%%%%%%%%%%%%%%
% Bibliography %
%%%%%%%%%%%%%%%%

\bibliographystyle{acm}
\bibliography{math-biblio-convexity}

% Document ends here
\end{document}